\newtheoremstyle{thmlike}
{8pt}
{3pt}
{\slshape}
{}
{\bfseries}
{.}
{1em}
{}
\newtheoremstyle{deflike}
{8pt}
{3pt}
{}
{}
{\bfseries}
{.}
{1em}
{}
\theoremstyle{thmlike}
\newtheorem{theorem}{Theorem}[section]
\newtheorem{proposition}[theorem]{Proposition}
\newtheorem{lemma}[theorem]{Lemma}
\newtheorem{corollary}[theorem]{Corollary}
\theoremstyle{deflike}
\newtheorem{definition}[theorem]{Definition}
\newtheorem{remark}[theorem]{Remark}
\newtheorem{example}[theorem]{Example}
\newtheorem{notation}[theorem]{Notation}
\newenvironment{manualtheorem}[1]{%
  \manualtheoreminner
}{\endmanualtheoreminner}
\newcommand{\ks}{\mathrm{ks}}
\newcommand{\dR}{\mathrm{dR}}
\renewcommand{\ker}{\operatorname{Ker}}
\newcommand{\Alg}{\operatorname{\mathsf{Alg}}}
\newcommand{\Gal}{\operatorname{Gal}}
\newcommand{\Gm}{\operatorname{\mathbb{G}_{m}}}
\newcommand{\GL}{\operatorname{GL}}
\newcommand{\SL}{\operatorname{SL}}
\newcommand{\Sp}{\operatorname{Sp}}
\newcommand{\U}{\operatorname{U}}
\newcommand{\GU}{\operatorname{GU}}
\newcommand{\Mat}{\operatorname{Mat}}
\renewcommand{\AA}{\mathbb{A}}
\newcommand{\CC}{\mathbb{C}}
\renewcommand{\SS}{\mathbb{S}}
\newcommand{\HH}{\mathcal{H}}
\newcommand{\QQ}{\mathbb{Q}}
\newcommand{\RR}{\mathbb{R}}
\newcommand{\UU}{\mathcal{U}}
\newcommand{\ZZ}{\mathbb{Z}}
\newcommand{\std}{\operatorname{std}}
\newcommand{\tr}{\operatorname{tr}}
\newcommand{\tp}[1]{\prescript{t}{}{#1}} 
\renewcommand{\det}{\operatorname{det}}
\newcommand{\Sym}{\operatorname{Sym}}
\newcommand{\at}{\makeatletter @\makeatother}
\newcommand{\auniv}{\mathscr{A}}
\title{Constructing vector-valued automorphic forms on unitary groups}
\author[T. L. Browning]{Thomas L. Browning}
\address{Thomas L. Browning, Department of Mathematics, University of California, Berkeley, Evans Hall, Berkeley, CA 94720-3840, USA}
\email{tb1004913@berkeley.edu}
\author[P. \v{C}oupek]{Pavel \v{C}oupek}
\address{Pavel \v{C}oupek, Department of Mathematics, Michigan State University, Wells Hall, 619 Red Cedar Road, East Lansing, MI 48824, USA}
\email{coupekpa@msu.edu}
\author[E. Eischen]{Ellen Eischen}
\address{Ellen Eischen, Department of Mathematics, University of Oregon, Fenton Hall, Eugene, OR 97403-1222, USA}
\email{eeischen@uoregon.edu}
\author[C. Frechette]{Claire Frechette}
\address{Claire Frechette, Department of Mathematics, Boston College, Maloney Hall, Chestnut Hill, MA 02467-3806, USA}
\email{frechecl@bc.edu}
\author[S. Hong]{Serin Hong}
\address{Serin Hong, Department of Mathematics,
University of Arizona, 617 N Santa Rita Ave, Tucson, AZ 85721, USA}
\email{serinh@arizona.edu}
\author[S. Y. Lee]{Si Ying Lee}
\address{Si Ying Lee, Department of Mathematics, Stanford University, Building 380, Stanford, CA 94305, USA}
\email{leesy1@stanford.edu}
\author[D. Marcil]{David Marcil}
\address{David Marcil, Department of Mathematics, Columbia University, 2990 Broadway, New York, NY 10027, USA}
\email{d.marcil\at columbia.edu}
\thanks{EE's work was partially supported by NSF Grants DMS-2302011 and DMS-1751281. Part of the paper was completed while EE was in residence at the Institute for Advanced Study; this material is based upon work supported by NSF Grant DMS-1926686.  CF's work was partially supported by NSF Grant DMS-2203042. SH's work was partially supported by the Simons Foundation under Grant Number 814268. During the revision process, PC was partially supported by the NSF grant DMS-2337830.}
\subjclass[2010]{Primary: 11F03, 11F55, 11F60, 14J15; Secondary: 11G10, 14G35}
\keywords{Unitary groups, vector-valued modular forms, differential operators}
\begin{document}

\begin{abstract}
We introduce a method for producing vector-valued automorphic forms on unitary groups from scalar-valued ones.  As an application, we construct an explicit example.  Our strategy employs certain differential operators.  It is inspired by work of Cl\'ery and van der Geer in the setting of Siegel modular forms, but it also requires overcoming challenges that do not arise in the Siegel setting.
\end{abstract}

\maketitle

\section{Introduction}

Automorphic forms play a key role in number theory.  Automorphic forms on unitary groups have proved to be particularly valuable, thanks to structures that arise in this setting. Producing explicit examples of automorphic forms on unitary groups remains challenging, though, and there are relatively few such examples in the literature.

In the setting of unitary groups, one must work with not only scalar-valued but also vector-valued automorphic forms.  We introduce a method for constructing vector-valued automorphic forms on unitary groups from scalar-valued ones.  As an application, we construct an explicit example.  Our strategy employs certain differential operators.

Our approach is inspired by the work Cl\'ery and van der Geer carried out for Siegel modular forms, i.e. automorphic forms on symplectic groups \cite{CleryVDGeer}.  Their work extends a strategy of Witt \cite{Witt}.   Unitary groups  bear certain similarities to symplectic groups.  The setting of unitary groups also presents new challenges, though, which we overcome in this paper.  Related to this, the literature has many more explicit examples for Siegel modular forms than for automorphic forms on unitary groups.  This paper achieves three goals:
\begin{enumerate}
\item Extend Cl\'{e}ry and van der Geer's strategy using differential operators \cite{CleryVDGeer} to unitary groups of all signatures (Proposition \ref{prop:FirstDerivative} and Theorem \ref{thm:HigherDerivatives}).\label{item:diffops}
\item Apply this approach in an explicit example (Theorem \ref{thm:exworks}), which does not carry over trivially from the Siegel case and illustrates challenges new to this setting.
\item Provide a coordinate-free, geometric formulation of our construction (Theorem \ref{thm:reformulation}).  
 While unnecessary for our other goals, this is a more intrinsic approach.
\end{enumerate}

\subsection{Summary of main results and relationship with earlier work}
We draw inspiration from the aforementioned methods that \cite{CleryVDGeer, Witt} introduced for Siegel modular forms for $\Sp(2g, \mathbb{Z})$.  Those forms are defined on Siegel upper half-space
$$\mathfrak{H}_g=\{\tau \in \Mat_{g\times g}(\CC)\;|\; \tp{\tau} = \tau \text{ with } \mathrm{Im}(\tau) \text{ positive-definite} \}.$$
Consider the restriction of a Siegel modular form $f$ for $\Sp(2g, \mathbb{Z})$ via the embedding 
\[ \mathfrak{H}_j\times \mathfrak{H}_{g-j} \rightarrow \mathfrak{H}_g \hspace{1cm}\text{ given by } \hspace{1cm} (\tau', \tau'') \mapsto \begin{pmatrix} \tau'&0\\0&\tau''\end{pmatrix}\]
for some $0\leq j<g$. We write an arbitrary element $\tau \in \mathfrak{H}_g$ as
\[ \tau = \begin{pmatrix} \tau' & x \\ \tp{x} & \tau'' \end{pmatrix}.\]
If $f$ vanishes to order $r$ on $\mathfrak{H}_j\times \mathfrak{H}_{g-j}$, then a certain restricted differential form $\mathrm{d}_x^r f|_{\mathfrak{H}_j\times \mathfrak{H}_{g-j}}$ decomposes into tensor products of Siegel modular forms on $\mathfrak{H}_j$ and $\mathfrak{H}_{g-j}$ \cite[Propositions 2.2 and 2.3]{CleryVDGeer} and can be used to produce explicit vector-valued Siegel modular forms from scalar-valued ones \cite[Section 3]{CleryVDGeer}.

How, if at all, does this extend to the setting of automorphic forms on a unitary group $G$?  When $G$ is of signature $(n,n)$, similarities with the case of symplectic groups suggest a starting point.  Working with other signatures is more complicated.  (For example, unlike in the setting of symplectic groups, one must work with Fourier--Jacobi expansions whose coefficients are not constants but rather theta functions.)  Our results are completely general, in the sense that we handle all signatures.  Given unitary groups $U_\alpha$ and $U_\beta$ with an embedding $U_\alpha\times U_\beta\hookrightarrow U$ for $U$ a larger unitary group, we have a corresponding embedding of symmetric spaces $\HH_\alpha\times \HH_\beta\hookrightarrow \HH$ analogous to the embedding of Siegel upper half-spaces above.  In this case,  $\tau\in \HH$ is given by 
\begin{align*}
\tau = \begin{pmatrix} \tau_\alpha & x \\ y & \tau_\beta \end{pmatrix}
\end{align*}
with $\tau_\alpha\in\HH_\alpha$ and $\tau_\beta\in\HH_\beta$.  Our first main result, Theorem \ref{thm:HigherDerivatives}, extends \cite[Propositions 2.2 and 2.3]{CleryVDGeer} to unitary groups of all signatures and is summarized here:

\begin{manualtheorem}{A}[Summary of Theorem \ref{thm:HigherDerivatives}]\label{differential operator intro}
Suppose $f$ is a scalar-valued automorphic form on the unitary group $U$ that vanishes to order $r$ on $\HH_\alpha\times \HH_\beta$.
\begin{enumerate}
    \item Restricted differential forms $\mathrm{d}_x^r f |_{\HH_\alpha \times \HH_\beta}$ and $\mathrm{d}_y^r f|_{\HH_\alpha \times \HH_\beta}$ decompose into sums of tensor products of automorphic forms on the unitary groups $U_\alpha$ and $U_\beta$.\label{item:thmsummary}
    \item If $f$ is a cusp form, so are all automorphic forms appearing in \eqref{item:thmsummary}. 
\end{enumerate}
\end{manualtheorem}

It is straightforward to recover the approach in \cite{CleryVDGeer, Witt} as a special case of the more general construction in this paper.  The geometric reformulation of our operators (Section \ref{sec:ShimuraVar}, which culminates with Theorem \ref{thm:reformulation}), also suggests that this general construction could be extended still further.  This would likely come at the cost, though, of not getting the sort of explicit example we now describe.  

As an application of Theorem \ref{thm:HigherDerivatives}, we produce an explicit example of a vector-valued automorphic form for unitary groups.  Inspiration comes from  \cite[Section 3]{CleryVDGeer}, which produces vector-valued Siegel modular forms from derivatives of the {\em Schottky form}.  The Schottky form is a Siegel modular form $J$ on $\mathfrak{H}_4$, defined either as the Ikeda lift of the discriminant modular form $\Delta$, or as a difference between the theta series attached to the unimodular lattices $E_8 \oplus E_8$ and $D_{16}^+$. Cl\'ery and van der Geer explicitly describe the Siegel modular forms  $\mathrm{d}_x^4 J |_{\mathfrak{H}_2 \times \mathfrak{H}_2}$ and $\mathrm{d}_x^4 J |_{\mathfrak{H}_3 \times \mathfrak{H}_1}$:
\begin{enumerate}
    \item $J$ vanishes to order $4$ on $\mathfrak{H}_2 \times \mathfrak{H}_2$ with $\mathrm{d}_x^4 J |_{\mathfrak{H}_2 \times \mathfrak{H}_2} = \chi_{1} \otimes \chi_{1}$ for some (scalar-valued) cusp form $\chi_1$ on $\mathfrak{H}_2$.  
    \item $J$ vanishes to order $4$ on $\mathfrak{H}_3 \times \mathfrak{H}_1$ with $\mathrm{d}_x^4 J |_{\mathfrak{H}_3 \times \mathfrak{H}_1} = \chi_{2} \otimes \Delta$ for some (vector-valued) cusp form $\chi_2$ on $\mathfrak{H}_3$. 
\end{enumerate}
The scalar form $\chi_1$ generates the space of cusp forms on $\mathfrak{H}_2$ of weight 10.  The vector-valued form $\chi_2$ generates the space of cusp forms on $\mathfrak{H}_3$ for a specified vector weight. 

In this paper, we consider an analogue of the Schottky form, namely the {\em Hermitian Schottky form} that Hentschel and Krieg defined on a unitary group of signature $(4, 4)$ \cite{HermitianSchottky}.  For the moment, to highlight the relationship with $J$, we denote this form by $\tilde{J}$.  The restriction of the form $\tilde{J}$ to $\mathfrak{H}_4$ is $J$.  It is tempting to assume all the results for the Siegel case will carry over to this setting, but that does not quite turn out to be the case.  We obtain Theorem \ref{thm:exworks}, which concerns forms on Hermitian symmetric spaces $\HH_n$ for unitary groups of signature $(n,n)$ and relies on an embedding $\HH_j\times \HH_{n-j}\hookrightarrow \HH_n$ analogous to the embedding of Siegel upper-half spaces above.

\begin{manualtheorem}{B}[Summary of Theorem \ref{thm:exworks}]\label{vector-valued forms from Schottky}
The scalar form $\tilde{J}$ vanishes to order $4$ on $\HH_3 \times \HH_1$. Furthermore, the form $\mathrm{d}_x^4 \tilde{J} |_{\HH_3 \times \HH_1}$, whose weight is specified in Expression \eqref{eq:weight of dx4 F}, can be written as $M \otimes \Delta$ for some (vector-valued) cusp form $M$ on $\HH_3$.
\end{manualtheorem}

\begin{remark}
    The Fourier coefficients of $M \otimes \Delta$ can be explicitly computed, as seen in the proof of Proposition \ref{prop:vanish4}. However, this is only practical for the first few coefficients.
\end{remark}

Key differences between Theorem \ref{thm:exworks} and the corresponding construction for the Schottky form $J$ include: 
\begin{itemize}
\item The form $\tilde{J}$ does not vanish at all on $\HH_2 \times \HH_2$.
\item We do not claim that $M$ generates the entire space of (vector-valued) cusp forms on $\HH_3$ of its weight.
\end{itemize}
These differences reflect some new challenges that arise in the unitary setting. The first point indicates that the order of vanishing for a modular form does not behave well under the natural embedding of the Siegel space into the Hermitian space. The second point is related to the fact that the dimension of the space parametrizing cusp forms of specified weight currently remains unknown in the unitary setting, in contrast to the Siegel setting.  Consequently, in contrast to the frequent reliance on dimension formulas for cusp forms in the Siegel setting in \cite{CleryVDGeer}, our proof of Theorem \ref{thm:exworks} does not use any dimension formulas.  Instead, we rely on the computation of Fourier coefficients for the derivatives of $\tilde{J}$. 

\begin{remark} \label{rmk:comparison of differential operators}
It is natural to ask about the relationship between our differential operators and others.
The condition that $f$ vanishes to order $r$ on $\mathfrak{H}_j\times\mathfrak{H}_{g-j}$ ensures that our operator is the same (up to a scalar multiple, depending on the normalization) as the operator obtained by applying a particular Maass--Shimura operator (that takes holomorphic forms to nearly holomorphic forms of order $r$, as in, e.g. \cite{harris3lemmas, harrismaass, hasv, shimura81}) to $f$ and restricting the resulting nearly holomorphic form to $\mathfrak{H}_j\times\mathfrak{H}_{g-j}$.  It would also be interesting to explore the relationship between our differential operators and other holomorphic differential operators that have been constructed for symplectic and unitary groups, in particular Rankin--Cohen brackets \cite{banRC, dunn2024rankincohentypedifferentialoperators, martinsenadheera, eholzeribukiyama, ibukiyama}.  
\end{remark}

\subsection{Organization of the paper}
Section \ref{sec:diffops} introduces automorphic forms on unitary groups and certain differential operators.  This section then presents our main results about differential operators, Proposition \ref{prop:FirstDerivative} and Theorem \ref{thm:HigherDerivatives}, producing vector-weight forms from scalar-weight ones.  In Section \ref{sec:example}, we apply the operators in an explicit example.  The main result of this section is Theorem \ref{thm:exworks}.  Section \ref{sec:Proofs} presents proofs of the results from Section \ref{sec:diffops}.  Finally, Section \ref{sec:ShimuraVar}
provides a geometric reformulation of the operators.  While this geometric portion is unnecessary for the results earlier in the paper, it is likely to be of interest to those working with Shimura varieties or seeking an intrinsic, geometric understanding of the operators from Section \ref{sec:diffops}.

\subsection*{Acknowledgements} 
We began this project during the 2022 Arizona Winter School, and we would like to thank the organizers of the Winter School for making this collaboration possible. We would also like to thank Sam Mundy, who served as the project assistant during the five-day Winter School, as well as the remaining participants of the project group: Niven Achenjang, Paulina Fust, Trajan Hammonds, Kalyani Kansal, Kimia Kazemian, Yulia Kotelnikova, Luca Marannino, Aleksander Shmakov, and Wojtek Wawr\'{o}w. Eischen would also like to thank Ger van der Geer for answering her questions about \cite{CleryVDGeer} when she was formulating this project.  We are also grateful to the referee for helpful suggestions.

\section{Automorphic forms and differential operators}\label{sec:diffops}

We begin by introducing automorphic forms on unitary groups, and we construct certain differential operators that act on them.  In this section, we state our main results in a direct (coordinate-dependent) way, because this will be best suited to our application concerning an explicit example in the following section. The proofs of the main assertions (Proposition~\ref{prop:FirstDerivative} and Theorem~\ref{thm:HigherDerivatives}) will be postponed to Section~\ref{sec:Proofs}, after an example in Section~\ref{sec:example}.  For a more comprehensive treatment of automorphic forms on unitary groups from the perspectives employed in this paper, the reader might also consult \cite{
EischenAWS}.

\subsection{Complex automorphic forms}\label{subsec:Autoforms} 
Firstly, we specify notation and conventions for holomorphic automorphic forms on unitary groups considered.

Let $K/\QQ$ be an imaginary quadratic number field, and let $c$ be the unique nontrivial element of $\Gal(K/\QQ)$. Given $k \in K$, we set $\overline{k} := c(k)$. Consider a finite-dimensional $K$-vector space $V$ equipped with a non-degenerate Hermitian pairing $\langle \cdot, \cdot \rangle: V \times V \rightarrow K$. 

\begin{definition}
The \emph{unitary group} $\U(V)=\U(V, \langle \cdot , \cdot \rangle)$ is the algebraic group over $\QQ$  whose points are given by 
$$\U(V)(A)=\{g \in \GL(V\otimes_{\QQ}A)\;|\; \langle gv, gw\rangle=\langle v, w\rangle \text{ for all }v, w \in V\otimes_{\QQ}A\}, \;\; A\in \Alg_{\mathbb{Q}}.$$

More generally, the \emph{similitude unitary group}  $\GU(V)=\GU(V, \langle \cdot , \cdot \rangle)$ is given by
$$\GU(V)(A)=\{(g, \nu(g)) \in \GL(V\otimes_{\QQ}A)\times A^{\times}\;|\; \langle gv, gw\rangle=\nu(g)\langle v, w\rangle\}, \;\; A\in \Alg_{\mathbb{Q}}.$$
\end{definition}

The map $\nu:\GU(V) \rightarrow \Gm$ given by $(g, \nu(g)) \mapsto \nu(g)$ is a group homomorphism, with $\ker \nu=\U(V)$. While $\GU(V)$ will play a role later in Section~\ref{sec:ShimuraVar} for algebro-geometric interpretation, for our present purposes it suffices to work with the group $\U(V)$ only.

Denote by $\AA_f$ the ring of finite ad\`{e}les of $\mathbb{Q}$. A \emph{congruence subgroup} $\Gamma \subseteq \U(V)(\mathbb{Q})$ is a subgroup of the form $$\Gamma = \U(V)(\mathbb{Q})\cap \UU$$
for some open compact subgroup $\UU \subseteq \U(V)(\AA_f)$. Equivalently, upon choosing an integral model $\UU(V)$ of $\U(V),$ $\Gamma$ is a subgroup of $\U(V)(\mathbb{Q})$ that contains the principal congruence subgroup 
$$\Gamma(N)=\{g \in \UU(V)(\ZZ)\;|\; g \mapsto \mathrm{Id} \in \UU(V)(\ZZ/N\ZZ)\}$$
for some $N$ as a finite-index subgroup (this property does not depend on a choice of integral model, see e.g. \cite[Section~4]{Milne} for a detailed discussion).

Choosing a suitable basis of $V\otimes_{\QQ}\RR$, the pairing $\langle \cdot , \cdot \rangle$ can be represented by the matrix $$I_{m, n}=\begin{bmatrix} \mathrm{Id}_m & 0 \\ 0& -\mathrm{Id}_n \end{bmatrix}$$
for some pair of integers $(m, n)$. In this case, $ \U(V)(\RR)$ can be identified with the Lie group $$\U(m, n)=\{\gamma \in \GL_{d}(\CC) \;|\; {\tp{\overline{\gamma}}}I_{m,n}\gamma=I_{m, n}\},$$ where $d = m + n$. We call the pair $(m, n)$ the \emph{signature} of $\U(V)$. If $n$ (resp. $m$) is zero, we simply write $\U(m)$ (resp. $\U(n)$).

The group $\U(m, n)$ naturally acts on the \emph{bounded Hermitian space}
$$\HH_{m, n}=\{\tau \in \Mat_{m \times n}(\CC)\;|\;\mathrm{Id}_n-\tp{\overline{\tau}}\tau \text{ is positive-definite}\}$$
via linear fractional transformations, i.e. 
$$\gamma\tau=(A\tau+B)(C\tau+D)^{-1},\;\; \tau \in \HH_{m, n}, \;\;\;\gamma=\begin{bmatrix}A & B \\ C & D\end{bmatrix} \in \U(m, n)\,$$
(where the sizes of the blocks are determined by $A$ being $m \times m$ and $D$ being $n \times n$).

Given $\gamma \in \U(V)(\RR),$ identified with $\U(m, n)$ as above, and $\tau \in \HH_{m, n}$, we define the \emph{automorphy factors} $\lambda_\gamma(\tau) \in \GL_m(\CC)$, $\mu_{\gamma}(\tau) \in \GL_n(\CC)$ as follows:

$$\lambda_{\gamma}(\tau)=\overline{B}({\tp{\tau}}) +\overline{A}, \quad \mu_{\gamma}(\tau)=C \tau +D, \quad \gamma=\begin{bmatrix}A & B \\ C & D\end{bmatrix} \in \U(m, n).$$

With this setup, we employ the following definition of automorphic forms:

\begin{definition}\label{def:Autoforms}
Let $\U=\U(V)$ be a unitary group of signature $(m, n)$, let $\Gamma \subseteq \U(\QQ)$ be a congruence subgroup, and consider a representation $(\rho, W)$ of $\GL_m(\CC)\times \GL_n(\CC)$. An \emph{automorphic form on $\U$ of level $\Gamma$ and weight $\rho$} is a holomorphic map $f: \HH_{m, n} \rightarrow W$ satisfying 
\begin{equation} \label{eqn:automorphy1}
    f(\tau)
        =(f||_{\rho}\gamma)(\tau):=
    \rho\left(
        \lambda_{\gamma}(\tau), \mu_{\gamma}(\tau)
    \right)^{-1}
    f(\gamma \tau), 
        \;\; 
    \tau \in \HH_{m, n}, \;\; \gamma \in \Gamma\,.
\end{equation}
When the signature is $(1, 1)$ and $\U$ is quasi-split over $\QQ$, we additionally require that $f$ is holomorphic at all cusps. Denote by $M_{\rho}(\Gamma)$ the space of all automorphic forms of weight $\rho$ and level $\Gamma$.
\end{definition}

\begin{remark}
We defer the discussion of holomorphicity at cusps to Remark~\ref{rem:HolomorphicityAtCusps}, after introducing a variant of automorphic forms (amounting to a coordinate change) that is more suitable for the description of the condition. For now let us only remark that in all the cases except of quasi--split unitary group over $\QQ$ of signature $(1, 1)$, the analogue of the condition is automatically satisfied by Koecher's principle \cite{KaiWenLanKoecher}. 
\end{remark}

\begin{remark}
    Using the transitive action of $\U(\RR)$ on $\HH_{m,n}$ and a point whose stabilizer is $K_\infty = \U(m)(\RR) \times \U(n)(\RR) \subset \U(\RR)$, one can view such a map $f$ as a $W$-valued function on $\U(\RR)/K_\infty$. Recall that $\Gamma = \U(\QQ) \cap \UU$ for some open compact subgroup $\UU \subseteq \U(\AA_f)$. 
    
    As is well-known, $f$ can be extended adelically to a left-$\U(\QQ)$-invariant $W$-valued function $\varphi$ on $\U(\AA)/K_\infty$ such that $\varphi$ is right-invariant under $\UU$-translation and right-equivariant under $K_\infty$-translation (and the natural $\rho$ action of $K_\infty \subset \GL_m(\CC) \times \GL_n(\CC)$ on $W$). One says that $\varphi$ is \emph{an automorphic form on $\U$ of level $\UU$ and weight $\rho$}.  For more details, see \cite[Section 3.4]{EischenAWS}

    Lastly, using algebraicity of the associated Shimura variety, one can view $\varphi$ as a section of a certain automorphic vector bundle associated to $\rho$. We do not include precise details here to avoid introducing additional notation that is not necessary later. For instance, one technically needs to address the passage from $\U$ to $\GU$ to make this last step precise. For more details, see \cite[Section 2.7]{EHLS}. 

    In Section \ref{sec:ShimuraVar}, we pass to this geometric point of view, assuming familiarity with the connection between these two perspectives, to discuss algebraic properties of the differential operators constructed below.
\end{remark}

\begin{example} \label{ex:def of rho k l}
    In the case of the representation 
    \[
        \Delta_{k,l}
            =
        \det^k \boxtimes \det^l
            : 
        \GL_m(\CC) \times \GL_n(\CC)
            \to 
        \CC \otimes \CC \simeq \CC\,,
    \]
    we denote the space of automorphic forms of weight $\rho$ (and level $\Gamma$) also by $M_{(k, l)}(\Gamma)$, and refer to its elements as automorphic forms \emph{of weight $(k, l)$}. 
    
    Furthermore, we refer to such automorphic forms as \emph{scalar-valued}, and to general automorphic forms as \emph{vector-valued} when emphasising the distinction. 

    We sometimes write $\Delta_{k,l}$ as $\Delta_{(m,n), (k,l)}$ when we wish to emphasize the choice of ranks for both general linear groups involved in the definition.
\end{example}

\subsection{Variant: Hermitian modular forms}\label{subsec:HermitianForms}
Motivated by the example discussed in Section~\ref{sec:example}, we consider the following variant. Suppose $\U=\U(V)$ is of equal signature, i.e. $m=n$, and let us additionally assume that $\U$ is quasi-split over $\QQ$. Then a suitable choice of basis of $V\otimes_{\QQ}\RR$ (in fact, of $V$ by the quasi-splitness assumption) allows one to express the pairing $\langle \cdot, \cdot \rangle $ by the matrix $i\eta_n$ where $$\eta_n=\begin{bmatrix} & -\mathrm{Id}_n \\ \mathrm{Id}_n & \end{bmatrix},$$ identifying $\U(\RR)$ with
$$\U(\eta_n)=\{\gamma \in \GL_{2n}(\CC)\;|\; \tp{\overline{\gamma}}\eta_n \gamma=\eta_n\}.$$
The group $\U(\eta_n)$ then naturally acts on the \emph{unbounded Hermitian space}
$$\HH_n=\{\tau \in \Mat_{n\times n}(\CC)\;|\; i({^{t}\overline{\tau}}-\tau) \text{ is positive-definite} \}$$
again via fractional linear transformations, i.e.
$$\gamma\tau=(A\tau+B)(C\tau+D)^{-1},\;\; \tau \in \HH_n, \;\;\gamma=\begin{bmatrix}A & B \\ C & D\end{bmatrix} \in \U(\eta_{n})\,.$$
We then define, for $\gamma \in \U(\RR)$ viewed as an element of $\U(\eta_n),$ the automorphy factors $\lambda_{\gamma}(\tau), \mu_{\gamma}(\tau) \in \GL_n(\CC)$ as follows:
$$\lambda_{\gamma}(\tau)=\overline{C}({\tp{\tau}}) +\overline{D}, \quad \mu_{\gamma}(\tau)=C \tau +D, \quad \gamma=\begin{bmatrix}A & B \\ C & D\end{bmatrix} \in \U(\eta_n).$$

\begin{definition}\label{def:HermitianForms}
Let $\U=\U(V)$ be a unitary group of signature $(n, n)$ and quasi-split over $\QQ$. Consider a congruence subgroup $\Gamma \subseteq \U(\QQ)$ and a representation $(W, \rho)$ of $\GL_n(\CC)\times \GL_n(\CC)$. A \emph{Hermitian modular form of level $\Gamma$ and weight $\rho$} is a holomorphic map $f: \HH_n \rightarrow W$ satisfying 
$$f(\tau)=(f||_{\rho}\gamma)(\tau):=\rho\left(\lambda_{\gamma}(\tau), \mu_{\gamma}(\tau)\right)^{-1}f(\gamma \tau), \;\; \tau \in \HH_{n}, \;\; \gamma \in \Gamma.$$
Moreover, when the signature is $(1, 1)$, we additionally require that $f$ is holomorphic at every cusp.
\end{definition}
Hermitian modular forms were first introduced by Hel Braun in \cite{braun1, braun2, braun3}.

\begin{remark}\label{rem:HolomorphicityAtCusps}
Let us spell out the meaning of the holomorphicity condition along the lines of \cite[\S~5]{ShimuraArithmeticity}. Given a Hermitian modular function $f$ of level $\Gamma$ and weight $\rho$ (i.e. a function satisfying the modularity condition of Definition~\ref{def:HermitianForms}, but not necessarily the holomorphicity at cusps), $f$ admits a Fourier expansion of the form 
\begin{equation}\label{eqn:Fourier}
f(\tau)=\sum_{h \in M^\vee}\mathbf{c}(h)\exp(2\pi i \tr(h \tau))
\end{equation} 
where
\begin{itemize}
\item{$M$ is a $\ZZ$-lattice of complex Hermitian $n\times n$ matrices $\alpha$ with $\begin{bmatrix} \mathrm{Id}_n & \alpha \\ 0& \mathrm{Id}_n\end{bmatrix} \in\Gamma$, i.e. such that $f(\tau+\alpha)=f(\tau)$ (existence of such lattice is guaranteed by the quasi-splitness assumption),} 
\item{$M^{\vee}$ is the $\ZZ$-lattice of all complex Hermitian $n\times n$ matrices $h$ with $\tr(h \alpha) \in \ZZ$ for all $\alpha \in M$,}
\item{$\mathbf{c}(h)$ are vectors in the underlying vector space $W_{\rho}$  of $\rho$.}
\end{itemize}
When $n=1$, the Hermitian matrices $h \in M^{\vee}$ and the coefficients $c(h)$ are just real and complex numbers, respectively. In this case, we say that \emph{$f$ is holomorphic at $\infty$} if $c(h)=0$ whenever $h<0$. We say that \emph{$f$ is holomorphic at all cusps} if for all $\beta \in \SL_2(\mathbb{Q})$, the function $ f||_{\rho}\beta$ (automorphic of level $\beta^{-1}\Gamma \beta$) is holomorphic at $\infty$.
\end{remark}

We emphasize that for higher $n$, the analogous condition is automatic by Koecher's principle, which in the presence of Fourier coefficients can be stated as follows.

\begin{proposition}[Koecher's principle; {\cite[Proposition~5.7]{ShimuraArithmeticity}}]\label{prop:Koecher}
When $n>1$ and $f$ is a Hermitian modular function of some weight $\rho$ and level $\Gamma \subseteq \U(\eta_n)$, in the expression \eqref{eqn:Fourier} one has $\mathbf{c}(h)\neq 0$ only if $h$ is positive--semidefinite.
\end{proposition}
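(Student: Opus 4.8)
The plan is to establish Koecher's principle by analyzing the constraints that the modularity condition (Definition~\ref{def:HermitianForms}) places on the Fourier coefficients $\mathbf{c}(h)$ when $n > 1$. First I would observe that the Fourier expansion \eqref{eqn:Fourier} already encodes invariance under the unipotent translations $\tau \mapsto \tau + \alpha$ for $\alpha \in M$. The extra input in the quasi-split case of signature $(n,n)$ with $n > 1$ is that $\Gamma$ contains many more elements than these translations; in particular it contains block-diagonal elements of the form $\gamma = \mathrm{diag}(U, \tp{\overline{U}}^{-1})$ for suitable $U \in \GL_n$, which act on $\HH_n$ by $\tau \mapsto U \tau \tp{\overline{U}}$ and transform $h$ accordingly in the trace pairing. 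The key point is that when $n > 1$, the subgroup of such $U$ preserving the lattice data is rich enough (it contains a copy of $\GL_{n-1}$ acting nontrivially) that the orbit of any $h$ that fails to be positive-semidefinite is unbounded.

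The main step is then a standard but essential averaging/boundedness argument. Suppose toward a contradiction that $\mathbf{c}(h_0) \neq 0$ for some Hermitian $h_0$ that is not positive-semidefinite, so that $h_0$ has a negative eigenvalue; concretely there is a vector $v$ with $\tp{\overline{v}} h_0 v < 0$. I would exhibit a sequence of lattice-preserving transformations $U_j$ such that the transformed matrices $h_j = \tp{U_j} h_0 \overline{U_j}$ (or the appropriate conjugate for the trace pairing $\tr(h\tau)$) all index nonzero Fourier coefficients of $f$, while $\tr(h_j \tau)$ can be driven so that $\exp(2\pi i \tr(h_j \tau))$ blows up on some fixed $\tau \in \HH_n$. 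This contradicts the convergence of the Fourier series \eqref{eqn:Fourier} on $\HH_n$, forcing $\mathbf{c}(h_0) = 0$. The holomorphy of $f$ on all of $\HH_n$ guarantees the absolute convergence needed for this to be a genuine contradiction.

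Since the statement is quoted directly as \cite[Proposition~5.7]{ShimuraArithmeticity}, the cleanest approach is simply to cite Shimura and indicate how his argument specializes to the present normalization; I would only sketch the mechanism above to orient the reader. The substantive content to verify is that our conventions (the unbounded model $\HH_n$, the automorphy factors $\lambda_\gamma, \mu_\gamma$, and the trace pairing in \eqref{eqn:Fourier}) match Shimura's, so that his proposition applies verbatim.

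The hard part, and the reason this genuinely requires $n > 1$, is ensuring that the lattice-preserving symmetries act with enough "room" to move a non-semidefinite $h$ to arbitrarily negative positions in the cone: when $n = 1$ the only available $U$ are units of norm one, the symmetry group is finite, and no such unbounded orbit exists — which is exactly why holomorphy at cusps must be imposed by hand in the signature $(1,1)$ case, as noted in Definition~\ref{def:HermitianForms} and Remark~\ref{rem:HolomorphicityAtCusps}. I would therefore make explicit where the $n > 1$ hypothesis is used, namely in producing the required infinite family of symmetries, and defer the detailed convergence estimate to the reference.
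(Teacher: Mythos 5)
Your proposal takes essentially the same approach as the paper: the paper offers no independent proof of this proposition, but states it as a direct citation of \cite[Proposition~5.7]{ShimuraArithmeticity}, which is exactly your primary strategy of citing Shimura after checking that the unbounded model, automorphy factors, and trace pairing match his conventions. Your supplementary sketch of the mechanism --- transporting a non-positive-semidefinite $h$ along the infinite family of elements $\operatorname{diag}(U, \tp{\overline{U}}^{-1}) \in \U(\eta_n)$ to contradict convergence of the Fourier series, with the $n>1$ hypothesis supplying the infinitude of such $U$ --- is the standard Koecher argument and is correct in outline.
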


Fourier expansions also allow us to conveniently define Hermitian cusp forms.

\begin{definition}\label{def:CuspForms}
Given a Hermitian modular form $f$ of level $\Gamma$ and weight $\rho$, $f$ is called a \textit{cusp form} if for any $\beta \in \U(\QQ)$, in the Fourier expansion \eqref{eqn:Fourier} of $f||_{\rho}\beta$, one has $\mathbf{c}(h)=0$ whenever $h$ is not positive-definite.
\end{definition}

We denote the space of all Hermitian modular forms of weight $\rho$ and level $\Gamma$ again by  $M_{\rho}(\Gamma)$ (we hope that there is little potential for substantial confusion). Similarly, we employ the notation $M_{(k, l)}(\Gamma)$ when $\rho=\det^k\boxtimes \det^l$, and refer to Hermitian modular forms of this type as \emph{scalar-valued}. We denote the space of all all Hermitian cusp forms by $S_\rho(\Gamma)$ (and $S_{(k, l)}(\Gamma)$ if $\rho=\det^k\boxtimes \det^l$).

Following Shimura's notation from \cite{ShimuraArithmeticity}, when convenient to make the disctinction, we will refer to unitary groups and automorphic forms in the coordinates described in Section~\ref{subsec:Autoforms} as the ``case (UB)'' (where ``UB'' stands for ``unitary ball'') and to Hermitian forms on $U(\eta_n)$ in the sense of this section as the ``case (UT)'' (i.e., ``unitary tube''). 

\subsection{Further variants}\label{subsec:UTs}
Following \cite{ShimuraUnitaryGrps}, let us mention two further variants that will serve an auxiliary purpose thanks to their convenience in expressing automorphc forms via Fourier expansions. Let us fix a unitary group $\U$ of signature $(m, n)$.

Firstly, we consider the case $m \neq n$. Without loss of generality, let us assume that $m>n$. Then $\U(\RR)$ may also be realized as the group 
$$\widetilde{\U}(m, n)=\{g \in \GL_{n+m}(\CC)\;|\; \tp{\overline{g}}\eta_{m,n}g=\eta_{m, n}\},$$ where
$$\eta_{m,n}=\begin{bmatrix} & & \mathrm{Id}_n \\
 & S & \\
-\mathrm{Id}_n &  &\end{bmatrix}$$
with $S$ diagonal skew-Hermitian and such that $-iS$ is positive-definite. Then, $\U(\RR)$ naturally acts on the symmetric space 
$$\widetilde{\HH}_{m, n}=\left\{z=\begin{bmatrix}\tau \\ u\end{bmatrix}\;|\; \tau \in \Mat_{n \times n}(\CC),  u \in \Mat_{(m-n) \times n}(\CC),\;\;  -i(\tau-\tp{\overline{\tau}})+i\tp{\overline{u}}Su>0\right\}.$$
This is the convention taken in \cite{ShimuraUnitaryGrps} (up to order of coordinates). The appropriate action and automorphy factors $\lambda_{\gamma}(z), \mu_{\gamma}(z)$ (in the notation of \textit{loc. cit.}, $\kappa(\gamma, z)$ and $\mu(\gamma, z)$, resp.) are given as follows. For a matrix 
$$\gamma =\begin{bmatrix} A_1 & B_1 & C_1 \\ A_2 & B_2 & C_2 \\ A_3 & B_3 & C_3  \end{bmatrix} \in \widetilde{\U}(m, n)$$ (with diagonal blocks square of sizes $n, m-n$ and $n$, respectively) and $z=\begin{bmatrix} \tau \\ u \end{bmatrix} \in \widetilde{\HH}_{m, n}$, one has 
$$\gamma\left(z\right)=\left(\begin{bmatrix}A_1 & B_1 \\ A_2 & B_2\end{bmatrix}\begin{bmatrix}\tau \\ u\end{bmatrix}+\begin{bmatrix}C_1 \\ C_2\end{bmatrix}\right)\left(A_3\tau+B_3u+C_3\right)^{-1},$$
$$\lambda_{\gamma}(z)=\begin{bmatrix}\overline{A}_3 \tp{\tau}+\overline{C}_3 & \overline{A}_3 \tp{u}+\overline{B}_3 \overline{S}^{-1} \\ \overline{S}(\overline{A}_2 \tp{\tau}+\overline{C}_2) & \overline{S}(\overline{A}_2\tp{u}+\overline{B}_2 \overline{S}^{-1})\end{bmatrix}, \;\; \mu_{\gamma}(z)=A_3\tau+B_3u+C_3\,.
$$ The corresponding automorphic forms are then defined as in Definition~\ref{def:HermitianForms}. As explained in \textit{loc. cit.}, the automorphic forms admit \textit{Fourier--Jacobi expansions}, that is, they can be written in the form
$$f\left(\tau, u\right)=\sum_{h}\mathbf{c}(u; h)\exp(2 \pi i \tr(h\tau)),$$
with $h$ ranging over a suitable lattice of $n \times n$ Hermitian matrices and the coefficient functions $\mathbf{c}(u; h)$ are certain theta functions. Similarly to the previous case, the Koecher's principle implies that $\mathbf{c}(u; h)=0$ unless $h$ is non-negative (cf. \cite[p. 570]{ShimuraUnitaryGrps}). 

Lastly, let us consider the case $m=n>1$, but when $\U$ is not quasi-split over $\QQ$. By \cite[\S6]{ShimuraUnitaryGrps}, the following form of the group can nonetheless be achieved: 
$$\U(\RR)\simeq \U(\widetilde{\eta_n})=\{g \in \GL_{2n}(\CC)\;|\; \tp{\overline{g}}\widetilde{\eta_{n}}g=\widetilde{\eta_{n}}\},$$
with 
$$\widetilde{\eta_n}=\begin{bmatrix}t &&& \\  &&&\mathrm{Id}_{n-1}\\ & & s & \\ & -\mathrm{Id}_{n-1} && \end{bmatrix},$$ where $s, t \in K$ are pure imaginary elements whose product is positive. The associated symmetric space is given as
$$\widetilde{\HH}_n=\{Z \in \Mat_{n \times n}(\CC)\;|\; i \begin{bmatrix} \tp{\overline{Z}} & \mathrm{Id}_n \end{bmatrix} \widetilde{\eta}_n \begin{bmatrix} Z\\ \mathrm{Id}_{n} \end{bmatrix} >0\}.$$
Automorphic forms on $\widetilde{H}_n$ admit Fourier--Jacobi expansions of the form
$$f(\tau)=\sum_{h}\mathbf{c}(u, v, w; h)\exp(2 \pi i \tr(h\tau')),$$
where we consider the coordinates $$\widetilde{H}_n \ni \tau =\begin{bmatrix}u & v \\ w & \tau'\end{bmatrix},\;\; u \in \CC, \;\tau \in \Mat_{(n-1)\times(n-1)}(\CC)$$
and $h$ ranges through a suitable lattice of $(n-1)\times(n-1)$ Hermitian matrices. Once again, the coefficients $\mathbf{c}(u, v, w; h)$ vanish unless $h$ is non-negative.

\subsection{Restrictions of automorphic forms}\label{subsec:C_Restriction}

Let us fix a choice of $K, V, \langle \cdot, \cdot \rangle,$ the corresponding unitary group $\U=\U(V)$ of signature $(m, n)$, and a choice of a congruence subgroup $\Gamma$ as in the previous section. Consider a decomposition $V=V_1\oplus V_2$ where $V_1, V_2$ are vector subspaces orthogonal for the pairing $\langle -, - \rangle$. This induces a natural embedding
$$\eta:\U_1 \times \U_2 \hookrightarrow \U$$
where $\U_i=\U(V_i),\;\;i=1,2$. Additionally, let us choose congruence subgroups $\Gamma_i\subseteq \U_i(\QQ)$ such that $\eta(\Gamma_1 \times \Gamma_2)\subseteq \Gamma$. Let $(m_i, n_i)$ be the corresponding signatures, so that $(m_1, n_1)+(m_2, n_2)=(m, n)$.

We fix a choice of one of the two versions of coordinates discussed in Sections~\ref{subsec:Autoforms} and \ref{subsec:HermitianForms}, resp., the same for all groups $\U, \U_1 $ and $\U_2$. Explicitly, we consider one of the following two options:

\begin{enumerate}[(1)]
\item{In the case (UB), we identify $\U(\RR)$ with $\U(m, n),$ $\U_1(\RR)$ with $\U(m_1, n_1)$ and $\U_2(\RR)$ with $\U(m_2, n_2)$. In this case, we set $\HH=\HH_{m, n},$ $\HH^{(1)}=\HH_{m_1, n_1}$ and $\HH^{(2)}=\HH_{m_2, n_2}$.}
\item{When $m=n$, $m_1=n_1$ and $m_2=n_2$, with all the groups $\U_1, \U_2, \U$ quasi-split over $\QQ$, we may consider the case (UT), i.e. we identify $\U(\RR)$ with $\U(\eta_n)$ and $\U_i(\RR)$ with $\U(\eta_{n_i})$, $i=1, 2$. In this case, we set $\HH=\HH_{n},$ $\HH^{(1)}=\HH_{n_1}$ and $\HH^{(2)}=\HH_{n_2}$.}
\end{enumerate}

We will discuss both of these cases at once, in a unified way. To that end, from now on we use the term ``automorphic form'' to refer both to automorphic forms in the sense of Definition~\ref{def:Autoforms} in the case (UB), as well as to Hermitian modular forms (from Definition~\ref{def:HermitianForms}) in the case (UT).

Regardless of which case occurs, the bases of $(V_i)_\RR$ giving the chosen coordinates may be chosen so that $\eta$ becomes the map 
\begin{align*}
\eta: \U_1(\RR)\times \U_2(\RR) &\hookrightarrow \U(\RR), \;\;\; \left(\begin{bmatrix}a_1 & b_1 \\ c_1 & d_1 \end{bmatrix}, \begin{bmatrix}a_2 & b_2 \\ c_2 & d_2 \end{bmatrix}\right) \mapsto \begin{bmatrix}a_1 && b_1 & \\ & a_2 & & b_2 \\ c_1 && d_1 & \\ & c_2 & & d_2\end{bmatrix}.  
\end{align*}
The corresponding embedding of symmetric spaces is then given by the map
\begin{align*}
\iota: \HH^{(1)}\times \HH^{(2)} &\hookrightarrow \HH,\;\;\; (\tau_1, \tau_2)\mapsto \begin{bmatrix}\tau_1 & \\ & \tau_2\end{bmatrix},
\end{align*}
which is clearly $\U_1(\RR)\times \U_2(\RR)$-equivariant in the obvious sense. 

We fix a notation for coordinates on $\HH$ compatibly with the embeddings, that is,
\begin{equation} \label{eq:coordinates}
\HH \ni \tau=\begin{bmatrix}\tau_1 & x \\ y & \tau_2 \end{bmatrix}, \;\;\; \tau_1 \in \HH^{(1)}, \; \tau_2\in \HH^{(2)},
\end{equation}
where $x=(x_{ij})$ and $y=(y_{ji})$ are rectangular blocks whose dimensions are determined by the blocks $\tau_1, \tau_2$. If needed, we will refer to the coordinates $(x_{ij})$ as ``$x$-coordinates'', and similarly, to $(y_{ji})$ as ``$y$-coordinates''.

Observe that for $\gamma=\eta(\gamma_1, \gamma_2) \in \Gamma$ with $\gamma_i \in \Gamma_i$ and
 $\tau=\iota(\tau_1, \tau_2)$ with $\tau_i \in \HH^{(i)},$ we have 
$$\lambda_{\gamma}(\tau)=\begin{bmatrix}\lambda_{\gamma_1}(\tau_1)& \\ & \lambda_{\gamma_2}(\tau_2)\end{bmatrix},\;\;\; \mu_{\gamma}(\tau)=\begin{bmatrix}\mu_{\gamma_1}(\tau_1)& \\ & \mu_{\gamma_2}(\tau_2)\end{bmatrix}\,.$$
It follows that, when $k, l$ are arbitrary integers, the restriction of a form $f \in M_{(k,l)}(\Gamma)$ to a function on $\HH^{(1)} \times \HH^{(2)}$ defines a map
\begin{equation} \label{eq:classical restriction}
    M_{(k, l)}(\Gamma) \to M_{(k, l)}(\Gamma_1) \otimes M_{(k, l)}(\Gamma_2)\,.
\end{equation}
In terms of the coordinates \eqref{eq:coordinates}, the map is given by $f\mapsto f\vert_{\substack{x=0\\y=0}},$ i.e. by restriction to the locus where all $x_{ij}=y_{ji}=0$.

\subsection{The construction}\label{subsec:Construction}

Fix the notation for $(k, l), \U_i, \U, \HH^{(i)}, \Gamma_i, \Gamma$ etc. as in Section~\ref{subsec:C_Restriction}. Our next goal is to describe the desired differential operators on automorphic forms in coordinates. To formulate the construction, the following calculus notation will be useful.

\begin{notation}
Given a function $f(x_1, \dots, x_l)$, and an (ordered) tuple of indicies $\alpha=(i_1, i_2, \dots, i_r),$ denote $$\partial_{x_\alpha}f=\frac{\partial^{r}f}{\partial x_{i_1}\partial x_{i_2}\dots \partial x_{i_r}}.$$ 
For $r \geq 1$, $\mathrm{d}^r f$ denotes the $r$-th total differential of $f$, that is, the (symmetric) $r$-linear form on the tangent space given in coordinates as 
$$
    \mathrm{d}^r f
        =
    \sum_{\alpha}\partial_{x_\alpha}f \mathrm{d}x_\alpha\,,
$$
where $\alpha=(i_1, i_2, \dots i_r)$ runs over all (ordered) $r$-tuples of indices for coordinates, and $\mathrm{d}x_\alpha$ denotes $\mathrm{d}x_{i_1}\mathrm{d}x_{i_2} \dots \mathrm{d}x_{i_r}$. 

When $g=(g_1, g_2, \dots g_l)$ is a vector function in variables $y_1, y_2, \dots, y_t$, we denote by $\mathrm{d}^r g$ the tensor $(\mathrm{d}^rg_1, \mathrm{d}^rg_2, \dots \mathrm{d}^r g_l)$ (so that $\mathrm{d} g=\mathrm{d}^1 g$ agrees with the usual meaning of the tangent map). 
\end{notation}

\begin{remark}
In this notation, we have the following convenient forms of the chain rule:
$$\mathrm{d}(f\circ g)=\mathrm{d}f \circ \mathrm{d}g, \;\;$$
$$\mathrm{d}^r(f\circ g)=\sum_{a=1}^r\sum_{b_1+b_2+\dots+b_a=r}\mathrm{d}^af \circ (\mathrm{d}^{b_1}g,\mathrm{d}^{b_2}g, \dots, \mathrm{d}^{b_a}g). \;\;$$
\end{remark}

We are now ready to proceed with the construction. To make the construction more transparent, we start by formulating the case of the first derivative separately. 

For every pair of non-negative integers $(m, n)$, define \begin{align*}
    \rho^+_{(m, n), (k, l)} 
        :=
    \Delta_{(m,n), (k,l)} \otimes (\rho_{\mathrm{std}} \boxtimes \mathbf{1}) 
        : 
    \GL_m(\CC) \times \GL_n(\CC) 
        &\rightarrow 
    \GL(\CC^m\otimes \CC) \simeq \GL_m(\CC), \\ (U, V)
        &\mapsto 
    \det(U)^k \det(V)^l U\,, 
        \\ \\ 
    \rho^-_{(m, n), (k, l)} 
        :=
    \Delta_{(m,n), (k,l)} \otimes (\mathbf{1} \boxtimes \rho_{\mathrm{std}})
        :
    \GL_{m}(\CC) \times \GL_{n}(\CC)
        &\rightarrow 
    \GL(\CC\otimes \CC^n) \simeq \GL_n(\CC), \\ (U, V)
        &\mapsto 
    \det(U)^k \det(V)^l V\,,
\end{align*}
as representations of $\GL_{m}(\CC)\times \GL_{n}(\CC)$. Here, $\Delta_{(m,n), (k,l)}$ is as in Example \ref{ex:def of rho k l} and $\rho_{\mathrm{std}}$ (resp. $\mathbf{1}$) denotes the standard (resp. trivial) representation of the appropriate dimension. 

\begin{remark}
    When $(m,n) = (m_i, n_i)$ for $i$ = 1 or 2, as in Section \ref{subsec:C_Restriction}, we simply write $\rho^\pm_{i, (k,l)}$ for $\rho^\pm_{(m_i, n_i), (k,l)}$.
\end{remark}

For a holomorphic function $f: \HH \rightarrow \CC$, we denote by $\mathrm{d}_{x} f|_{\HH^{(1)}\times \HH^{(2)}}$ the form \[ 
        \mathrm{d}_{x} f|_{\HH^{(1)}\times \HH^{(2)}}=
        \left.
            \mathrm{d}_{x}f
        \right\vert_{
            \substack{x=0\\y=0}
        }
            =
        \left.
            \sum_{i, j} 
                \frac{\partial f}{\partial x_{ij}} \mathrm{d}x_{ij}
        \right\vert_{
            \substack{x_{ij}=0 \\ y_{ji}=0},
        }
    \]
that is, the form obtained from $\mathrm{d} f$ by projection onto the span of differentials of the $x$-coordinates, and then setting all $x$- and $y$-coordinates to $0$. 

Similarly, $\mathrm{d}_{y} f|_{\HH^{(1)}\times \HH^{(2)}}$ is defined as 
        $$\mathrm{d}_{y} f|_{\HH^{(1)}\times \HH^{(2)}}=\left.\mathrm{d}_{y} f\right\vert_{\substack{x=0\\y=0}}=\left.\sum_{j, i} \frac{\partial f}{\partial y_{ji}}\mathrm{d}y_{ji}\right\vert_{\substack{x_{ij}=0 \\ y_{ji}=0}},$$
i.e. the analogous form where we project $\mathrm{d} f$ onto $\mathrm{d}y$-coordinates instead before restricting to $\HH^{(1)}\times\HH^{(2)}$.   

\begin{proposition}\label{prop:FirstDerivative}
Let $f: \HH \rightarrow \CC$ be an automorphic form of weight $(k, l)$, and assume that the restriction $f|_{\HH^{(1)} \times \HH^{(2)}}$ vanishes. 

Then, the differential form 
   $\mathrm{d}_{x} f|_{\HH^{(1)}\times \HH^{(2)}}$ 
is a tensor product of vector-valued automorphic forms of level $\Gamma_1$ and $\Gamma_2$ respectively, and weight $\rho^+_{1, (k,l)}$ and $\rho^-_{2, (k,l)}$ respectively. 

Similarly, the form 
    $\mathrm{d}_{y} f|_{\HH^{(1)}\times \HH^{(2)}}$ 
is a tensor product of vector-valued automorphic forms of level $\Gamma_1$ and $\Gamma_2$ respectively, and weight $\rho^-_{1, (k,l)}$ and $\rho^+_{2, (k,l)}$ respectively.
\end{proposition}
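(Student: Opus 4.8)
The plan is to exploit the weight-$(k,l)$ automorphy relation for $f$, namely
$$f(\gamma\tau) = \det(\lambda_\gamma(\tau))^{k}\,\det(\mu_\gamma(\tau))^{l}\,f(\tau),\qquad \gamma\in\Gamma,$$
by differentiating it in the $x$-coordinates and restricting to the diagonal locus $\{x=0,\,y=0\}$, for elements of the special form $\gamma=\eta(\gamma_1,\gamma_2)$ with $\gamma_i\in\Gamma_i$. Such $\gamma$ lie in $\Gamma$ since $\eta(\Gamma_1\times\Gamma_2)\subseteq\Gamma$, and because $\iota$ is $\U_1(\RR)\times\U_2(\RR)$-equivariant, $\gamma$ carries the diagonal point $\iota(\tau_1,\tau_2)$ to $\iota(\gamma_1\tau_1,\gamma_2\tau_2)$.

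First I would record two inputs. On the diagonal the automorphy factors block-diagonalize as in Section~\ref{subsec:C_Restriction}, so $\det\lambda_\gamma=\det\lambda_{\gamma_1}(\tau_1)\det\lambda_{\gamma_2}(\tau_2)$ and likewise for $\mu_\gamma$. Next I would compute the first-order behavior of $\gamma\tau=(A\tau+B)(C\tau+D)^{-1}$ near the diagonal: writing the blocks of $\eta(\gamma_1,\gamma_2)$ and expanding the block inverse of $C\tau+D$ to first order in $(x,y)$, the diagonal blocks of $\gamma\tau$ agree with $\gamma_1\tau_1,\gamma_2\tau_2$ up to second order, the $y$-block depends to first order only on $y$, and the $x$-block equals $[\,a_1-(\gamma_1\tau_1)c_1\,]\,x\,\mu_{\gamma_2}(\tau_2)^{-1}$. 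Hence $\partial(\gamma\tau)/\partial x_{ij}$, evaluated on the diagonal, is supported in the $x$-block, and the relevant Jacobian is the linear map $x\mapsto\lambda_{\gamma_1}(\tau_1)^{-1}\,x\,\mu_{\gamma_2}(\tau_2)^{-1}$.

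Then I would assemble the two sides. Differentiating the automorphy relation in $x$ and restricting to $x=y=0$: on the right, the product rule produces a term in which $f$ itself (undifferentiated) appears, multiplied by the $x$-derivative of $\det\lambda_\gamma^{k}\det\mu_\gamma^{l}$; this term is killed precisely by the hypothesis $f|_{\HH^{(1)}\times\HH^{(2)}}=0$, leaving $\det\lambda_\gamma^{k}\det\mu_\gamma^{l}\cdot\mathrm{d}_x f|_{\HH^{(1)}\times\HH^{(2)}}$. On the left, by the chain rule and the Jacobian computation, only $\partial f/\partial x$ at the image point survives, composed with $x\mapsto\lambda_{\gamma_1}^{-1}x\mu_{\gamma_2}^{-1}$. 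Comparing coefficients shows that $(\partial f/\partial x_{ij})|_{\HH^{(1)}\times\HH^{(2)}}$ satisfies exactly the transformation law of a holomorphic form on $\U_1\times\U_2$ for $\Gamma_1\times\Gamma_2$ of weight $\rho^+_{1,(k,l)}\boxtimes\rho^-_{2,(k,l)}$: the factor $\lambda_{\gamma_1}$, acting on the $\CC^{m_1}$-index, together with $\det\lambda_{\gamma_1}^{k}\det\mu_{\gamma_1}^{l}$ is $\rho^+_{1,(k,l)}$, and $\mu_{\gamma_2}$, acting on the $\CC^{n_2}$-index, together with $\det\lambda_{\gamma_2}^{k}\det\mu_{\gamma_2}^{l}$ is $\rho^-_{2,(k,l)}$. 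Holomorphy is immediate, and holomorphy at the cusps (in the $(1,1)$ cases) and the cusp-form property follow by applying the same differentiation to the Fourier(--Jacobi) expansion, which preserves the support conditions on the coefficients. Since forms of a fixed weight and level span a finite-dimensional space, such a form lies in $M_{\rho^+_{1,(k,l)}}(\Gamma_1)\otimes M_{\rho^-_{2,(k,l)}}(\Gamma_2)$, yielding the asserted tensor-product description. The $\mathrm{d}_y$ statement follows by the symmetric computation, where the $y$-block Jacobian is $y\mapsto\lambda_{\gamma_2}(\tau_2)^{-1}\,y\,\mu_{\gamma_1}(\tau_1)^{-1}$ and the weights exchange to $\rho^-_{1,(k,l)}$ and $\rho^+_{2,(k,l)}$.

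The main obstacle I anticipate is the Jacobian identity, namely identifying $a_1-(\gamma_1\tau_1)c_1$ with $\lambda_{\gamma_1}(\tau_1)^{-1}$ and tracking the transposes and conjugates so that the left- and right-index actions land on the standard representations appearing in $\rho^+$ and $\rho^-$. This requires the block relations defining $\U(m_i,n_i)$ (such as $\tp{\overline{A}}A-\tp{\overline{C}}C=\mathrm{Id}$ and their companions) and must be carried out uniformly in the (UB) and (UT) coordinates, where the explicit shape of $\lambda_\gamma$ differs. The remaining steps -- the product-rule bookkeeping and the vanishing of the undifferentiated-$f$ term -- are routine once the Jacobian is in hand.
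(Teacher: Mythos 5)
Your overall route is the same as the paper's: differentiate the weight-$(k,l)$ automorphy relation along the $x$-coordinates, use the hypothesis $f|_{\HH^{(1)}\times\HH^{(2)}}=0$ to kill the product-rule term containing the underived $f$, and compute the Jacobian of $\tau\mapsto\gamma\tau$ on the diagonal locus. Two remarks on the details. First, the Jacobian you flag as the ``main obstacle'' is handled in the paper by quoting Shimura's lemma, $\mathrm{d}(\gamma\tau)=\tp{\lambda_\gamma(\tau)}^{-1}\,\mathrm{d}\tau\,\mu_\gamma(\tau)^{-1}$; note the transpose, so your identification should read $a_1-(\gamma_1\tau_1)c_1=\tp{\lambda_{\gamma_1}(\tau_1)}^{-1}$, not $\lambda_{\gamma_1}(\tau_1)^{-1}$. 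This transpose is then absorbed by the dual action on $\Mat_{m_1\times n_2}(\CC)^{\vee}$, namely $\rho'(A,B)(\alpha)(X)=\alpha(\tp{A}XB)$, which is again isomorphic to $\rho_{\mathrm{std}}\boxtimes\rho_{\mathrm{std}}$; with that convention the weights do come out as $\rho^+_{1,(k,l)}$ and $\rho^-_{2,(k,l)}$, exactly as you assert. So this ``obstacle'' is real but benign bookkeeping.

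The genuine gap is your final step. You claim that since forms of a fixed weight and level span a finite-dimensional space, a holomorphic function on $\HH^{(1)}\times\HH^{(2)}$ satisfying the product transformation law ``lies in $M_{\rho^+_{1,(k,l)}}(\Gamma_1)\otimes M_{\rho^-_{2,(k,l)}}(\Gamma_2)$.'' That implication is not automatic: a function of two variables transforming under $\Gamma_1\times\Gamma_2$ need not a priori be a finite sum of pure tensors, and finite-dimensionality by itself is only the hypothesis, not the proof. This is precisely the content of a lemma of Witt (Satz A of \cite{Witt}), which the paper isolates as a separate linear-algebraic lemma and applies. The argument is short but must be made: for each fixed $\tau_2$, the slice $\tau_1\mapsto \mathrm{d}_x f|_{\HH^{(1)}\times\HH^{(2)}}(\tau_1,\tau_2)$ (in coordinates) lies in the finite-dimensional space $L_X=M_{\rho^+_{1,(k,l)}}(\Gamma_1)$ --- this uses your transformation law with $\gamma_2=\mathrm{id}$, holomorphy, and, in the quasi-split signature-$(1,1)$ case, holomorphy at the cusps, which you correctly reduce to the Fourier--Jacobi expansion; symmetrically for slices in $\tau_1$. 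Then choose a basis $g_1,\dots,g_n$ of the span of the $\tau_1$-slices and points $x_1,\dots,x_n\in\HH^{(1)}$ with $(g_i(x_j))$ invertible; writing the form as $\sum_i g_i(\tau_1)c_i(\tau_2)$, each coefficient $c_i$ is a linear combination of the slices $\tau_2\mapsto \mathrm{d}_x f|_{\HH^{(1)}\times\HH^{(2)}}(x_j,\tau_2)$, hence lies in $M_{\rho^-_{2,(k,l)}}(\Gamma_2)$, giving the decomposition. With this lemma inserted (either cited or proved as above), your proof matches the paper's.
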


\begin{remark}\label{rem:MatricesAsTensors}
To better understand the content of Proposition~\ref{prop:FirstDerivative}, let us be more explicit about the expected modularity rule. By convention, we identify the space $\Mat_{m_1 \times n_2}(\CC)$ of $m_1\times n_2$ complex matrices with the $\GL_{m_1}(\CC)\times \GL_{n_2}(\CC)$-representation $\rho_{\mathrm{std}}\boxtimes \rho_{\mathrm{std}}$, with the action given by the formula
$$\rho(A, B)(X)=AX \tp{B}\,,\;\; (A, B) \in \GL_{m_1}(\CC)\times \GL_{n_2}(\CC)\,, \;\; X \in \Mat_{m_1 \times n_2}(\CC)\,.$$

The form $\left.\mathrm{d}_xf \right\vert_{\substack{x=0\\y=0}}$ naturally takes values in the dual space $\Mat_{m_1 \times n_2}(\CC)^{\vee},$ on which $\GL_{m_1}(\CC)\times \GL_{n_2}(\CC)$ acts via
$$\rho'(A, B)(\alpha)(X)=\alpha(\tp{A}X B) \;\;(=\alpha(\rho(\tp{A}, \tp{B})(X))\;,$$ and it is easy to see that the resulting action makes  $\Mat_{m_1 \times n_2}(\CC)^{\vee}$ into a representation which is again isomorphic to $\rho_{\mathrm{std}}\boxtimes \rho_{\mathrm{std}}$.
 
The modularity condition of Proposition~\ref{prop:FirstDerivative} can then be rephrased as follows: assuming the vanishing of $f|_{\HH^{(1)} \times \HH^{(2)}}$, for an element $\gamma$ of the form 
\begin{align*}\gamma&=\begin{bmatrix}A & B \\ C & D \end{bmatrix}=\begin{bmatrix}a_1 && b_1 & \\ & a_2 & & b_2 \\ c_1 && d_1 & \\ & c_2 & & d_2\end{bmatrix} =\eta(\gamma_1, \gamma_2) \in \Gamma, \\ 
(\gamma_1, \gamma_2)&=\left(\begin{bmatrix}a_1 & b_1 \\ c_1 & d_1 \end{bmatrix}, \begin{bmatrix}a_2 & b_2 \\ c_2 & d_2 \end{bmatrix}\right) \in \Gamma_1 \times \Gamma_2,
\end{align*} 
and $(\tau_1, \tau_2) \in \HH^{(1)}\times \HH^{(2)},$ we have 
\begin{align*}
\mathrm{d}_{x} f\left(\begin{bmatrix}\tau_1 & \\ & \tau_2\end{bmatrix}\right)=\\
=\rho'(\rho^+_{m_1, n_1}(\lambda_{\gamma_1}(\tau_1) &, \mu_{\gamma_1}(\tau_1))^{-1},{\rho^-_{m_2, n_2}(\lambda_{\gamma_2}(\tau_2), \mu_{\gamma_2}(\tau_2))}^{-1})\left(\mathrm{d}_{x} f\left(\begin{bmatrix}\gamma_1\tau_1 & \\ & \gamma_2\tau_2\end{bmatrix}\right)\right).
\end{align*}

Similar interpretation applies to the case of the form $\mathrm{d}_{y} f|_{\HH^{(1)}\times \HH^{(2)}}$ (where we instead consider the space $\Mat_{m_2 \times n_1}(\CC)$ etc.).
  \end{remark}

Let us now describe the general case of higher order derivatives. Given a holomorphic map $f: \HH \rightarrow \CC$, the form $\mathrm{d}^r f$ projected onto the $\mathrm{d}x$-coordinates and restricted  to $\HH^{(1)} \times \HH^{(2)}$, 
\begin{equation} \label{eq:definition of d x r}
    \left.
        \mathrm{d}^r_x f
    \right|_{\HH^{(1)}\times \HH^{(2)}} =
    \left.
        \mathrm{d}^r_x f
    \right\vert_{
        \substack{x=0\\y=0}
    }   =
    \sum_\alpha
        \left. 
            \partial_{x_\alpha} f
        \right\vert_{
            \substack{x=0\\y=0}
        }
        \mathrm{d}x_{\alpha}
\end{equation}
(where $\alpha$ runs over all $r$-tuples of indices for the $x$-coordinates), is a map from $\HH^{(1)} \times \HH^{(2)}$ that naturally lands in the space of $r$-linear forms on $m_1 \times n_2$ complex matrices, that is, in $\Sym^{r}(\Mat_{m_1 \times n_2}(\CC)^{\vee})$.

The natural left action of $\GL_{m_1}(\CC) \times \GL_{n_2}(\CC)$ on $\Mat_{m_1 \times n_2}(\CC)$ (as outlined in Remark~\ref{rem:MatricesAsTensors}) gives a right action $*$ on $\Sym^{r}(\Mat_{m_1 \times {n_2}}(\CC)^{\vee})$ via 
\[
    [\beta*(A, B)](X_1 \dots X_r)=\beta(AX_1\tp{B},AX_2\tp{B}, \dots, AX_r\tp{B}),
\]
where $(A, B) \in \GL_{m_1}(\CC)\times\GL_{n_2}(\CC)$, which we make into left action by the rule 
\begin{equation} \label{eq:def rho prime}
    \rho'(A, B)(\beta)
        =
    \beta*(\tp{A}, \tp{B}).
\end{equation}

On the collection of coefficients $(\left.\partial_{x_\alpha}f\right\vert_{\substack{x=0\\y=0}})_{\alpha}$, the corresponding action is the expected action $\Sym^r(\rho_{\mathrm{std}}\boxtimes \rho_{\mathrm{std}})$ where $\rho_{\mathrm{std}}$ again stands for the standard representation of $\GL_{m_1}$ and $\GL_{n_2}$, respectively.

We have the decomposition
\begin{equation} \label{eq: Sym r decomposition Schur functor}
    \Sym^r(
        \rho_{\mathrm{std}} \boxtimes \rho_{\mathrm{std}}
    )
        =
    \bigoplus_{\lambda}
        \SS^{\lambda}(\rho_{\mathrm{std}})
            \boxtimes
        \SS^{\lambda}(\rho_{\mathrm{std}})
\end{equation}
(e.g. by \cite[Exercises~6.11(b), 4.51(b)]{FultonHarris} or \cite[Corollary~2.3.3]{Weyman})
where $\lambda$ runs over partitions of $r$ and $\SS^{\lambda}$ denotes the Schur functor. 

For each $\lambda$, denote by $$\mathrm{d}^r_{x, \lambda} f|_{\HH^{(1)}\times\HH^{(2)}}=\left.\mathrm{d}^r_{x, \lambda} f\right\vert_{\substack{x=0\\y=0}}$$ the map $\mathrm{d}^r_{x} f|_{\HH^{(1)}\times\HH^{(2)}}$ composed with the projection onto the $\SS^{\lambda}\boxtimes \SS^{\lambda}$-factor.

By a similar discussion, we define the forms $\mathrm{d}^r_{y, \lambda} f|_{\HH^{(1)}\times \HH^{(2)}}$ (the only difference being the dimensions of the matrix space, which is in this case $\Mat_{m_1 \times n_2}(\CC)$). The higher-derivative analogue of Proposition~\ref{prop:FirstDerivative} is the following.

\begin{theorem}\label{thm:HigherDerivatives}
As in the situation of Proposition~\ref{prop:FirstDerivative}, assume that the forms 
$
    \left.
        \mathrm{d}^s_x f
    \right\vert_{\HH^{(1)}\times \HH^{(2)}}
$ 
vanish for all $s$ with $0\leq s<r$.
Then for every partition $\lambda \vdash r$, the form 
$
    \left.
        \mathrm{d}^r_{x, \lambda} f
    \right\vert_{\HH^{(1)}\times \HH^{(2)}}
$ 
is a tensor product of automorphic forms of weights 
\[
    \rho^{+, \lambda}_{1, (k,l)}
        :=
    \Delta_{(m_1, n_1), (k,l)} 
        \otimes 
    (
        \SS^{\lambda}(\rho_{\mathrm{std}}) \boxtimes \mathbf{1}
    )
        =
    \left(
        \det^k \otimes \SS^{\lambda}(\rho_{\mathrm{std}})
    \right)
        \boxtimes 
    \det^l
\]
and 
\[
    \rho^{-, \lambda}_{2, (k,l)}
        :=
    \Delta_{(m_2, n_2), (k,l)} 
        \otimes 
    (
        \mathbf{1} \boxtimes \SS^{\lambda}(\rho_{\mathrm{std}})
    )
        = 
    \det^k 
        \boxtimes
    \left(
        \det^l \otimes \SS^{\lambda}(\rho_{\mathrm{std}})
    \right).
\]
Similarly, if the forms 
$
    \left.
        \mathrm{d}^s_y f
    \right\vert_{\HH^{(1)}\times \HH^{(2)}}
$ 
vanish for all $s<r$, then the form 
$
    \left.
        \mathrm{d}^r_{y, \lambda} f
    \right\vert_{\HH^{(1)}\times \HH^{(2)}}
$
is a tensor product of automorphic forms of weights 
\[
    \rho^{-, \lambda}_{1, (k,l)} 
        :=
    \Delta_{(m_1, n_1), (k,l)} 
        \otimes 
    (
        \mathbf{1} \boxtimes \SS^{\lambda}(\rho_{\mathrm{std}})
    )
        = 
    \det^k  
        \boxtimes
    \left(
        \det^l \otimes \SS^{\lambda}(\rho_{\mathrm{std}})
    \right)
\]
and 
\[
    \rho^{+, \lambda}_{2, (k,l)}
        :=
    \Delta_{(m_2, n_2), (k,l)} 
        \otimes 
    (
        \SS^{\lambda}(\rho_{\mathrm{std}}) \boxtimes \mathbf{1}
    )
        =
    \left(
        \det^k \otimes \SS^{\lambda}(\rho_{\mathrm{std}})
    \right)
        \boxtimes 
    \det^l.
\]

\end{theorem}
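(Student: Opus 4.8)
The plan is to establish the full transformation (modularity) law of $\mathrm{d}^r_x f|_{\HH^{(1)}\times\HH^{(2)}}$ as a $\Sym^r(\Mat_{m_1\times n_2}(\CC)^\vee)$-valued form, and then read off the $\lambda$-components via the Cauchy decomposition \eqref{eq: Sym r decomposition Schur functor}; the final tensor-product conclusion will follow exactly as in Proposition~\ref{prop:FirstDerivative}. Since $\mathrm{d}^r_x$ involves only the $x$-coordinates, I would work throughout on the slice $\{y=0\}$, holding $\tau_1,\tau_2$ fixed and varying $x$, and impose $x=0$ only at the very end. Writing $\gamma=\eta(\gamma_1,\gamma_2)\in\Gamma$, the automorphy relation for weight $(k,l)$ reads $f(\gamma\tau)=\det(\lambda_{\gamma}(\tau))^{k}\det(\mu_{\gamma}(\tau))^{l}\,f(\tau)$, and I would differentiate both sides $r$ times in $x$ and restrict to the diagonal.

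The crux is an \emph{affineness} lemma: on the slice $\{y=0\}$ the map $\phi\colon x\mapsto\gamma\tau$ is affine in $x$. Indeed, since $A,B,C,D$ are block-diagonal and $\tau|_{y=0}$ is block upper-triangular, both $A\tau+B$ and $C\tau+D$ are block upper-triangular; hence $(C\tau+D)^{-1}$ is block upper-triangular with off-diagonal block $-\mu_{\gamma_1}(\tau_1)^{-1}c_1 x\,\mu_{\gamma_2}(\tau_2)^{-1}$, which is exactly affine in $x$. A short computation then gives $\gamma\tau=\begin{bmatrix}\gamma_1\tau_1 & PxQ\\ 0 & \gamma_2\tau_2\end{bmatrix}$ with constant diagonal blocks and a single linear term $PxQ$ in the $x$-block, where $P=a_1-\gamma_1\tau_1\,c_1$ and $Q=\mu_{\gamma_2}(\tau_2)^{-1}$. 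In particular $\mathrm{d}^b_x\phi=0$ for all $b\ge 2$, and $d\phi$ is valued in the $x$-block.

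Granting affineness, the higher chain rule (Fa\`a di Bruno) collapses to its top term, $\mathrm{d}^r_x(f\circ\phi)|_{x=0}=\mathrm{D}^r f(\iota(\gamma_1\tau_1,\gamma_2\tau_2))\circ(d\phi)^{\otimes r}$; since $d\phi$ lands in the $x$-block, contracting the full derivative $\mathrm{D}^r f$ against it recovers the pure derivative, so this equals $(\mathrm{d}^r_x f)(\iota(\gamma_1\tau_1,\gamma_2\tau_2))\circ(d\phi)^{\otimes r}$. On the other side, applying the Leibniz rule to $\det(\lambda_{\gamma}(\tau))^{k}\det(\mu_{\gamma}(\tau))^{l}f(\tau)$, every term carrying at least one $x$-derivative of the automorphy factors pairs with some $\mathrm{d}^s_x f|_{\HH^{(1)}\times\HH^{(2)}}$ with $s<r$, which vanishes by hypothesis; only the term $\det(\lambda_{\gamma})^{k}\det(\mu_{\gamma})^{l}|_{\mathrm{diag}}\cdot\mathrm{d}^r_x f|_{\mathrm{diag}}$ survives. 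Comparing the two expressions, and using the block-diagonal form of $\lambda_\gamma,\mu_\gamma$ on the diagonal, yields the transformation law of $\mathrm{d}^r_x f|_{\HH^{(1)}\times\HH^{(2)}}$, whose weight is $\Delta_{(m_1,n_1),(k,l)}\boxtimes\Delta_{(m_2,n_2),(k,l)}$ tensored with $\Sym^r(\rho_{\mathrm{std}}\boxtimes\rho_{\mathrm{std}})$ acting via $d\phi$. Here I would invoke the $r=1$ case, namely Proposition~\ref{prop:FirstDerivative} together with Remark~\ref{rem:MatricesAsTensors}, to identify this linear $d\phi$-action precisely as $\rho_{\mathrm{std}}\boxtimes\rho_{\mathrm{std}}$ with the two standard factors living on the $\GL_{m_1}$- and $\GL_{n_2}$-indices; this also disposes of both the (UB) and (UT) cases uniformly, without re-deriving the matrix identities for $P$ and $Q$.

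Finally, I would apply \eqref{eq: Sym r decomposition Schur functor} to write $\Sym^r(\rho_{\mathrm{std}}\boxtimes\rho_{\mathrm{std}})=\bigoplus_{\lambda\vdash r}\SS^{\lambda}(\rho_{\mathrm{std}})\boxtimes\SS^{\lambda}(\rho_{\mathrm{std}})$. Since projection onto the $\lambda$-isotypic summand is $\GL_{m_1}\times\GL_{n_2}$-equivariant, the component $\mathrm{d}^r_{x,\lambda}f|_{\HH^{(1)}\times\HH^{(2)}}$ transforms with weight $\rho^{+,\lambda}_{1,(k,l)}$ on $\U_1$ and $\rho^{-,\lambda}_{2,(k,l)}$ on $\U_2$. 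As this weight is an external product, with the group-$1$ factors depending only on $\tau_1$ and the group-$2$ factors only on $\tau_2$, the form lies in the algebraic tensor product of the two spaces of forms, exactly as in Proposition~\ref{prop:FirstDerivative}. The assertion for $\mathrm{d}^r_{y,\lambda}f$ follows by the symmetric argument, exchanging the roles of $x$ and $y$ (and of the $\GL_{m_2}$- and $\GL_{n_1}$-indices). I expect the main obstacle to be the affineness lemma, which is exactly what forces the higher chain rule to collapse to a single term; the remainder is the $r=1$ input of Proposition~\ref{prop:FirstDerivative} together with the representation-theoretic bookkeeping of the Schur decomposition and the care needed to match the dual/transpose conventions of Remark~\ref{rem:MatricesAsTensors}.
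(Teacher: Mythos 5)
Your treatment of the transformation law is correct, and your affineness lemma is a genuinely nice variant of the paper's argument. The paper proves only a block-structure statement for the higher differentials of $\gamma\tau$ (Corollary~\ref{cor:DiffirentialInCorner}) and therefore needs the vanishing hypothesis on \emph{both} sides of the differentiated automorphy relation, whereas your observation that $x\mapsto\gamma\tau$ is affine on the slice $\{y=0\}$, with
\begin{equation*}
\gamma\tau=\begin{bmatrix}\gamma_1\tau_1 & P\,x\,Q\\ 0 & \gamma_2\tau_2\end{bmatrix},
\qquad P=a_1-(\gamma_1\tau_1)c_1=\tp{\lambda_{\gamma_1}(\tau_1)}^{-1},\quad Q=\mu_{\gamma_2}(\tau_2)^{-1},
\end{equation*}
collapses the Fa\`a di Bruno expansion identically, so the hypothesis is only needed on the Leibniz side. (The identification of $P$ is Lemma~\ref{lem:Shimura}; citing it directly is cleaner than routing through Proposition~\ref{prop:FirstDerivative}, whose statement alone does not give you equality of matrices.) Up to and including the equivariant projection onto the $\SS^{\lambda}\boxtimes\SS^{\lambda}$-component, this reproduces Proposition~\ref{prop:Modularity}.

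However, there is a genuine gap: the theorem does not assert a transformation law, it asserts that $\mathrm{d}^r_{x,\lambda}f|_{\HH^{(1)}\times\HH^{(2)}}$ \emph{is} a finite sum of pure tensors of automorphic forms, and your proof of that final step is missing. Two ingredients are needed. First, joint modularity in $(\tau_1,\tau_2)$ does not by itself place a $V\otimes W$-valued function in the algebraic tensor product $M_{\rho_1}(\Gamma_1)\otimes M_{\rho_2}(\Gamma_2)$; your sentence ``the form lies in the algebraic tensor product of the two spaces of forms'' is precisely the statement requiring proof. The paper establishes it with Witt's linear-algebra lemma (Lemma~\ref{lem:WittLA}), which crucially uses finite-dimensionality of the spaces of automorphic forms, and your appeal to ``exactly as in Proposition~\ref{prop:FirstDerivative}'' is empty here: that proposition is the $r=1$ case of this very theorem and is proved in the paper by the same machinery, so in a blind setting you have no independent argument to borrow. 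Second, when $\U_1$ or $\U_2$ has signature $(1,1)$ and is quasi-split over $\QQ$, being an automorphic form includes holomorphy at the cusps (Definitions~\ref{def:Autoforms} and~\ref{def:HermitianForms}), which does not follow from the transformation law; the paper's Proposition~\ref{prop:Cusps} proves it by analyzing Fourier and Fourier--Jacobi expansions, in separate cases according to the signature and quasi-splitness of $\U$. This is not a corner case: the application in Section~\ref{sec:example} uses exactly $\U(\eta_3)\times\U(\eta_1)\hookrightarrow\U(\eta_4)$, where the factors on the second group must be honest elliptic modular forms (holomorphic at the cusp) for the decomposition $M_0\otimes E_{12}+M\otimes\Delta$ in Theorem~\ref{thm:exworks} to make sense.
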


\begin{remark}
By convention, we identify both $\mathrm{d}^0_x f$ and $\mathrm{d}^0_y f$ with $f$ itself, so that vanishing of either of the forms $\left.\mathrm{d}^0_x f\right\vert_{\HH^{(1)}\times \HH^{(2)}}$ or $\left.\mathrm{d}^0_y f\right\vert_{\HH^{(1)}\times \HH^{(2)}}$ is equivalent to the assumption  $f|_{\HH^{(1)} \times \HH^{(2)}}=0$ of Proposition~\ref{prop:FirstDerivative}.
\end{remark}

\begin{remark}\label{rem:ThmForVariants}
The choices of coordinates used in this section, i.e. for cases (UB) and (UT), are especially useful in describing our differential operators explicitly. However, it will be a consequence of the discussion in Section~\ref{sec:ShimuraVar} that the construction described in this section is in fact coordinate-independent. In particular, the obvious statements for automorphic forms as described in Section~\ref{subsec:UTs} remain valid. 

Let us describe one particular case where the different variants of coordinates will be useful later on. Suppose that $m \neq n$ and that the factor $\U_1$ is quasi-split and of signature $(1, 1)$. We may then consider the diagonal embedding of symmetric spaces $\HH_1 \times \widetilde{\HH}_{m-1, n-1} \hookrightarrow \widetilde{\HH}_{m, n}$, and fix coordinates on $\widetilde{\HH}_{m, n}$ accordingly, i.e.
\begin{equation}\label{UTcoordinates}
\widetilde{\HH}_{m, n} \ni \begin{bmatrix} \tau \\ u\end{bmatrix}
=\begin{bmatrix} \tau_1 & x \\ z & \tau_2 \\ w & u_2 \end{bmatrix}, \;\;\; \tau_1 \in \HH_1, \; \begin{bmatrix} \tau_2 \\ u_2 \end{bmatrix} \in \widetilde{\HH}_{m-1, n-1}.
\end{equation}
Letting $y$ denote the column vector $\tp{\begin{bmatrix}z & w\end{bmatrix}},$ the operators $\mathrm{d}^r_{x, \lambda}(-)|_{\HH_1 \times \widetilde{\HH}_{m-1, n-1}}$ and $\mathrm{d}^r_{y, \lambda}(-)|_{\HH_1 \times \widetilde{\HH}_{m-1, n-1}}$  make sense and satisfy conclusions of Theorem~\ref{thm:HigherDerivatives} (producing Hermitian modular forms for the first factor and automorphic forms in the sense of Section~\ref{subsec:UTs} for the second factor).
\end{remark}

\section{Example: Restricting a Hermitian Analog of the Schottky Form}
\label{sec:example}
Hentschel and Krieg construct a Hermitian analog of the Schottky form as a suitable linear combination of Hermitian theta series of even unimodular Gaussian lattices \cite{HermitianSchottky}.
We will briefly review their construction, and then use it to construct a vector-valued automorphic form.

The three Hermitian positive definite matrices
\[\left[\begin{smallmatrix}2&0&0&0&1&1&0&1\\0&2&0&0&-1&1&-1&0\\0&0&2&0&0&1&1&-1\\0&0&0&2&-1&0&1&1\\1&-1&0&-1&2&0&0&0\\1&1&1&0&0&2&0&0\\0&-1&1&1&0&0&2&0\\1&0&-1&1&0&0&0&2\end{smallmatrix}\right],\,\left[\begin{smallmatrix}2&-1&0&-1&-1&-1&-i&1+i\\-1&2&1-i&0&0&0&0&-i\\0&1+i&2&0&0&0&0&1\\-1&0&0&2&1&1&i&-1\\-1&0&0&1&2&1&i&-1\\-1&0&0&1&1&2&i&-1\\i&0&0&-i&-i&-i&2&i\\1-i&i&1&-1&-1&-1&-i&4\end{smallmatrix}\right],\,\left[\begin{smallmatrix}2&0&1+i&i&0&0&0&0\\0&2&i&1-i&0&0&0&0\\1-i&-i&2&0&0&0&0&0\\-i&1+i&0&2&0&0&0&0\\0&0&0&0&2&0&1+i&i\\0&0&0&0&0&2&i&1-i\\0&0&0&0&1-i&-i&2&0\\0&0&0&0&-i&1+i&0&2\end{smallmatrix}\right]\]
will be denoted by $S_1$, $S_2$, and $S_3$, respectively.
Their exact values will not be important to this discussion, but they arise as the Gram matrices of the three isometry classes of even unimodular Gaussian lattices of rank 8 \cite{HermitianSchottky}.
Each Hermitian positive definite matrix $S_i$ gives rise to the Hermitian inner product $\langle v,w\rangle_i=w^\ast S_iv$ on $\ZZ[i]^8$.

We can then define Hermitian theta series
\[\Theta_i^{(n)}(\tau)=\sum_{M\in\ZZ[i]^{8\times n}}\exp(\pi i\,\tr(\tp{\overline{M}}S_i\,M\,\tau))=\sum_ha_i^{(n)}(h)\exp(2\pi i\tr(h\tau))\]
with Fourier coefficients
\begin{align}
    a_i^{(n)}(h)&=\#\{M\in\ZZ[i]^{8\times n}:\tp{\overline{M}} S_iM=2h\}\nonumber\\
    &=\#\{v_1,\ldots,v_n\in\ZZ[i]^8:\langle v_j,v_k\rangle_i=2h_{kj}\}.\label{eq:combo}
\end{align}
Hentschel and Krieg consider the linear combination $F^{(n)}=8\Theta_1^{(n)}-15\Theta_2^{(n)}+7\Theta_3^{(n)}$ and demonstrate that $F^{(4)}$ is a Hermitian analog of the Schottky form \cite{HermitianSchottky}.
\begin{lemma}\label{lem:schottky}
The linear combination 
$F^{(4)}=8\Theta_1^{(4)}-15\Theta_2^{(4)}+7\Theta_3^{(4)}$ is a nonzero cusp form of weight 8, and the restriction $F^{(4)}|_{\mathcal S_4}$ is a multiple of the Schottky form.
\end{lemma}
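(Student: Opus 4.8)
The plan is to combine the classical theory of Hermitian theta series with a direct computation of the restriction to the Siegel locus, treating the three assertions (weight, cuspidality, the shape of the restriction, and nonvanishing) in turn. First I would recall that each $\Theta_i^{(4)}$ is a Hermitian modular form of weight $8$ for the full Hermitian modular group over $\ZZ[i]$ by the classical theory of theta series attached to a positive definite even unimodular Hermitian lattice $(\ZZ[i]^8, S_i)$ (Braun, Krieg; cf. \cite{HermitianSchottky}); hence $F^{(4)}$ is a Hermitian modular form of weight $8$. For cuspidality I would use the block reduction for singular Fourier coefficients: any positive semidefinite Hermitian $h$ of rank $r<4$ is $\GL_4(\ZZ[i])$-equivalent to $\begin{pmatrix} h' & 0 \\ 0 & 0\end{pmatrix}$ with $h'$ positive definite of size $r$, and since the lattices are positive definite the representation number is $a_i^{(r)}(h')$. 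Thus every non-definite Fourier coefficient of $F^{(4)}$ equals a coefficient of $F^{(r)}$ for some $r\le 3$. Since $F^{(n)}=0$ for $n\le 3$ — the vanishing established by Hentschel and Krieg \cite{HermitianSchottky}, consistent with $F^{(0)}=8-15+7=0$ — all non-definite coefficients vanish (equivalently, the Siegel $\Phi$-operator, which sends $\Theta_i^{(n)}\mapsto\Theta_i^{(n-1)}$, kills $F^{(4)}$); applying the same reduction at each cusp shows $F^{(4)}$ is a cusp form.

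The heart of the argument is the restriction statement. I would first show that restricting a Hermitian theta series to the Siegel locus $\mathcal{S}_4=\mathfrak{H}_4\hookrightarrow\HH_4$ (the symmetric matrices, which indeed lie in $\HH_4$) produces the genus-$4$ Siegel theta series of the underlying integral lattice. Concretely, regard $(\ZZ[i]^8,\langle\cdot,\cdot\rangle_i)$ as an even unimodular $\ZZ$-lattice $L_i$ of rank $16$ via the trace form. For symmetric $\tau\in\mathfrak{H}_4$ and $M\in\ZZ[i]^{8\times 4}$, the Hermitian matrix $H=\tp{\overline{M}}S_iM$ decomposes as $\mathrm{Re}(H)+i\,\mathrm{Im}(H)$ with $\mathrm{Re}(H)$ symmetric and $\mathrm{Im}(H)$ skew-symmetric, so that $\tr(H\tau)=\tr(\mathrm{Re}(H)\tau)$ because $\tr(\text{skew}\cdot\text{sym})=0$. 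As $\mathrm{Re}(H)$ is exactly the real Gram matrix of the $4$-tuple of columns of $M$ viewed in $L_i$, the restriction of $\Theta_i^{(4)}$ is the Siegel theta series $\theta^{(4)}_{L_i}$. This computation is routine but must be set up with care regarding the trace form and the symmetry of the Siegel locus.

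Now each $L_i$ is even unimodular of rank $16$, hence isometric to $E_8\oplus E_8$ or to $D_{16}^+$. Writing $A=E_8\oplus E_8$ and $B=D_{16}^+$, the restriction takes the form $F^{(4)}|_{\mathcal{S}_4}=\alpha\,\theta^{(4)}_A+\beta\,\theta^{(4)}_B$, where the coefficients $8,-15,7$ are distributed between the two types, so $\alpha+\beta=8-15+7=0$. Therefore $F^{(4)}|_{\mathcal{S}_4}=\alpha(\theta^{(4)}_A-\theta^{(4)}_B)$, which by the very definition of the Schottky form $J$ as a scalar multiple of $\theta^{(4)}_{E_8\oplus E_8}-\theta^{(4)}_{D_{16}^+}$ is a multiple of $J$. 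This already yields the second assertion of the lemma, independently of the precise distribution of isometry types.

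Finally, for nonvanishing of $F^{(4)}$ I would pin down that $\alpha\neq 0$, equivalently that the three lattices $L_i$ do not all fall into the same isometry type. I expect this to be the main obstacle: because $\theta^{(n)}_A=\theta^{(n)}_B$ for $n\le 3$, the two candidate lattices cannot be separated by counting short vectors, so one must invoke a finer invariant — for instance whether $L_i$ splits off an $E_8$ summand, or a single genus-$4$ coefficient where $\theta^{(4)}_A$ and $\theta^{(4)}_B$ first differ — to determine the types of the three $S_i$. Once $\alpha\neq 0$ is established, $F^{(4)}|_{\mathcal{S}_4}$ is a nonzero multiple of $J\neq 0$, so $F^{(4)}\neq 0$. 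Alternatively, one may cite \cite{HermitianSchottky} directly for nonvanishing, or exhibit a single nonzero Fourier coefficient as in the computation underlying Proposition~\ref{prop:vanish4}.
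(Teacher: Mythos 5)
Your proposal takes a genuinely different route from the paper: the paper's entire proof of this lemma is a one-line citation to \cite{HermitianSchottky} (Theorem 3.1(c) and Corollary 3.4), whereas you reconstruct the underlying argument. The reconstruction is sound in its main steps. The block reduction of positive semidefinite Hermitian matrices over the PID $\ZZ[i]$ (the radical is saturated, hence a direct summand) correctly identifies every singular coefficient of $F^{(4)}$ with a coefficient of some $F^{(r)}$, $r\le 3$; the restriction computation --- $\tr(H\tau)=\tr(\mathrm{Re}(H)\tau)$ for symmetric $\tau$, so $\Theta_i^{(4)}|_{\mathcal S_4}$ is the genus-$4$ Siegel theta series of the underlying rank-$16$ even unimodular $\ZZ$-lattice --- combined with $8-15+7=0$ yields ``multiple of the Schottky form'' without even knowing the isometry types, which is an elegant observation. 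What the citation buys the paper is brevity and the numerical facts your argument cannot produce by itself; what your route buys is transparency about why the statement is true, and it makes visible the mechanism (theta reduction) that the paper reuses in Lemma~\ref{lem:lowdeg} and Proposition~\ref{prop:vanish0}.

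Two caveats. First, a circularity hazard: your cuspidality step invokes $F^{(n)}=0$ for $n\le 3$ as ``established by Hentschel and Krieg,'' but within this paper that vanishing is Lemma~\ref{lem:lowdeg}, and it is proved \emph{from} the cuspidality assertion of the present lemma --- it is exactly your block-reduction argument run in reverse. If your proof were spliced in, the low-degree vanishing would have to come from a genuinely independent source (e.g. the degree-$2$ and degree-$3$ arguments in \cite{HermitianSchottky}), not from Lemma~\ref{lem:lowdeg}; otherwise the two lemmas would prove each other. (Your reduction at cusps other than $\infty$ is also waved through; it is fine here because $\QQ(i)$ has class number one, so the full Hermitian modular group has a single cusp class, but that deserves a sentence.) Second, the nonvanishing of $F^{(4)}$ is not actually established by your primary argument: you need $\alpha\neq 0$, i.e. that the three Gaussian lattices do not all share one underlying $\ZZ$-isometry type, and you concede you cannot pin this down without a finer invariant or a citation. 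The cleanest repair is the fallback you name last: a single nonzero Fourier coefficient, as computed in the proof of Proposition~\ref{prop:vanish4} (the value $1981808640$), is a linear combination $\sum_i c_i\sum_{h_3}h_3^\alpha a_i^{(4)}(\cdot)$ of Fourier coefficients of $F^{(4)}$, so its nonvanishing forces $F^{(4)}\neq 0$, independently of the present lemma.
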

\begin{proof}
This is \cite[Theorem 3.1(c) and Corollary 3.4]{HermitianSchottky}.
\end{proof}
In contrast, $F^{(1)}$, $F^{(2)}$, and $F^{(3)}$ all vanish.
\begin{lemma}
    \label{lem:lowdeg}
    The linear combinations $F^{(n)}=8\Theta_1^{(n)}-15\Theta_2^{(n)}+7\Theta_3^{(n)}$ vanish for $n\leq3$.
    For $n=1$, we have $\Theta_1^{(1)}=\Theta_2^{(1)}=\Theta_3^{(1)}$.
\end{lemma}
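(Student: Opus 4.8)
The plan is to prove that the theta series $\Theta_i^{(n)}$ coincide in a way that forces the linear combination $F^{(n)} = 8\Theta_1^{(n)} - 15\Theta_2^{(n)} + 7\Theta_3^{(n)}$ to vanish identically for $n \leq 3$. Since two Hermitian modular forms agree if and only if all their Fourier coefficients agree, and the coefficient $a_i^{(n)}(h)$ is the representation number \eqref{eq:combo} counting $n$-tuples $v_1, \dots, v_n \in \ZZ[i]^8$ with prescribed Gram matrix $\langle v_j, v_k \rangle_i = 2h_{kj}$, it suffices to compare these representation numbers across the three lattices $(\ZZ[i]^8, \langle \cdot, \cdot \rangle_i)$. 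The key structural input is that $8 - 15 + 7 = 0$, so once one knows that the three lattices represent every ``small'' Gram matrix equally often, the alternating combination of representation numbers cancels.

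First I would reformulate the vanishing as a statement about the lattices $L_i = (\ZZ[i]^8, \langle \cdot, \cdot \rangle_i)$ themselves. The coefficient $a_i^{(n)}(h)$ depends only on the isometry class of the sublattice generated by an $n$-tuple with the given Gram matrix, together with the number of embeddings of that configuration into $L_i$. Thus I would seek to show that for every Hermitian lattice $N$ of rank at most $3$ over $\ZZ[i]$, the number of isometric embeddings $N \hookrightarrow L_i$ is the same for all three $i$, or at least that the weighted count $8 \cdot \#\{N \hookrightarrow L_1\} - 15 \cdot \#\{N \hookrightarrow L_2\} + 7 \cdot \#\{N \hookrightarrow L_3\}$ is zero. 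Concretely, because each $\Theta_i^{(n)}(\tau)$ only records configurations of $n \leq 3$ vectors, the relevant data is the collection of representation numbers $a_i^{(n)}(h)$ for Hermitian matrices $h$ of size at most $3$, and these are governed by how the even unimodular Gaussian lattices of rank $8$ represent low-rank Hermitian sublattices.

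The cleanest route is to invoke the theory of theta series as Hermitian modular forms of weight $8$ in genus $n$, and to use that the space containing the $\Theta_i^{(n)}$ has a known structure for small $n$: in degrees $n \leq 3$ the relevant space of Hermitian modular forms of weight $8$ is spanned by (restrictions of) Eisenstein-type series and is low-dimensional enough that the ``Siegel--Weil''-type relations among the three genus-$8$ theta series force $F^{(n)}$ to be proportional to a form that vanishes. Indeed, Hentschel and Krieg establish precisely that $F^{(4)}$ is the first nonvanishing member of the family (the Hermitian Schottky form), which presupposes $F^{(n)} = 0$ for $n < 4$; I would cite their construction \cite{HermitianSchottky} for the needed identities among representation numbers, and verify the base case $n = 1$ directly by checking $\Theta_1^{(1)} = \Theta_2^{(1)} = \Theta_3^{(1)}$, i.e.\ that all three rank-$8$ Gaussian lattices have the same theta series in one variable (equivalently, represent each $2h \in \ZZ_{\geq 0}$ equally often). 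This $n=1$ identity reduces to comparing the numbers of lattice vectors $v$ with $\langle v, v\rangle_i = 2h$, a finite verification for each small $h$ that propagates to all $h$ because the one-variable theta series of an even unimodular Hermitian lattice of rank $8$ is a modular form in a one-dimensional space.

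I expect the main obstacle to be the passage from the $n=1$ identity to the full vanishing for $n = 2, 3$: matching the two- and three-variable representation numbers is genuinely more delicate than the one-variable case, since distinct Gram matrices $h$ of higher rank can be represented with different multiplicities even when the diagonal (rank-one) data agrees. The efficient way around this is dimensional: one shows that the space of Hermitian cusp forms of weight $8$ and genus $n$ is zero (or that $F^{(n)}$ lands in a space forcing vanishing) for $n \leq 3$, so that the cuspidal combination $F^{(n)}$ must be identically zero. This is exactly the content extracted from \cite{HermitianSchottky}, and the cleanest proof simply transcribes their genus-reduction argument: restricting $F^{(n)}$ to lower-genus loci and applying the Siegel $\Phi$-operator together with the known triviality of the target spaces. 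I would therefore present the base case $n=1$ explicitly and defer the inductive genus reduction for $n = 2, 3$ to the representation-number identities of \cite{HermitianSchottky}.
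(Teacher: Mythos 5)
Your proposal for the $n=1$ assertion is sound, and it is actually a different (and self-contained) argument from the paper's: each $\Theta_i^{(1)}$ is the theta series of an even unimodular $\ZZ$-lattice of rank $16$, hence a weight-$8$ modular form for $\SL_2(\ZZ)$ with constant term $1$, and since $\dim M_8(\SL_2(\ZZ))=1$ all three equal $E_8$. The paper instead obtains $\Theta_1^{(1)}=\Theta_2^{(1)}=\Theta_3^{(1)}$ as a consequence: it first proves $F^{(1)}=0$, then combines this with $\Theta_1^{(1)}=\Theta_3^{(1)}$, which is \cite[Lemma 3.3(c)]{HermitianSchottky}.

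The genuine gap is in your mechanism for the vanishing at $n=2,3$. You propose: $F^{(n)}$ is cuspidal, and the space of Hermitian cusp forms of weight $8$ and genus $n\le3$ is zero, hence $F^{(n)}=0$. The dimension claim is \emph{false} already at $n=2$: by \cite[Theorem 3.1(b)]{HermitianSchottky} (quoted in this same section of the paper), $-8\Theta_1^{(2)}+3\Theta_2^{(2)}+5\Theta_3^{(2)}$ is a nonzero Hermitian cusp form of weight $8$ and degree $2$. Moreover, the paper explicitly emphasizes that dimension formulas for Hermitian cusp forms are unavailable in the unitary setting, so there is no footing for the analogous claim in genus $3$; and the cuspidality of $F^{(3)}$ that your induction needs would itself require $F^{(2)}=0$, i.e.\ it depends on the step that fails. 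Deferring to unspecified ``representation-number identities of \cite{HermitianSchottky}'' does not repair this, since you never identify a result there that yields the vanishing. The correct argument runs the genus reduction in the opposite direction and needs no dimension input at all: Equation \eqref{eq:combo} gives
$a_i^{(n)}(h)=a_i^{(n+1)}\!\left(\left[\begin{smallmatrix}h&\\&0\end{smallmatrix}\right]\right)$,
so every Fourier coefficient of $F^{(n)}$ occurs as a \emph{singular} Fourier coefficient of $F^{(n+1)}$. Since $F^{(4)}$ is a cusp form (Lemma \ref{lem:schottky}), all of its singular coefficients vanish, hence $F^{(3)}=0$; then $F^{(3)}$ is trivially a cusp form, so $F^{(2)}=0$, and likewise $F^{(1)}=0$. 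Your instinct to use the Siegel $\Phi$-operator was the right one, but it must be applied downward starting from the known cuspidality of $F^{(4)}$, not upward against purported vanishing of cusp-form spaces.
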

\begin{proof}
    From the combinatorial description of $a_i^{(n)}(h)$ given in Equation \eqref{eq:combo}, we have
    \[a_i^{(n)}(h)=a_i^{(n+1)}\left(\begin{bmatrix}h\\&0\end{bmatrix}\right).\]
    In other words, each Fourier coefficient of $\Theta_i^{(n)}$ appears as a singular Fourier coefficient of $\Theta_i^{(n+1)}$.
    The same is true for the linear combination $F^{(n)}$.
    In particular, if all singular Fourier coefficients of $F^{(n+1)}$ vanish (i.e., if $F^{(n+1)}$ is a cusp form), then $F^{(n)}$ must vanish.
    Since $F^{(4)}$ is a cusp form, this shows that $F^{(n)}$ vanishes for $n\leq3$.

    Lemma 3.3(c) in \cite{HermitianSchottky} states that $\Theta_1^{(n)}|_{\mathcal S_n}=\Theta_3^{(n)}|_{\mathcal S_n}$, where $\mathcal S_n$ denotes the Siegel upper half-space of degree $n$.
    In particular, we have $\Theta_1^{(1)}=\Theta_3^{(1)}$ since $\mathcal S_1=\mathcal H_1$.
    But then the relation $8\Theta_1^{(1)}-15\Theta_2^{(1)}+7\Theta_3^{(1)}=0$ forces $\Theta_1^{(1)}=\Theta_2^{(1)}=\Theta_3^{(1)}$.
\end{proof}

\begin{remark}
\label{rem:example}
Let us be explicit about the setup for our example. We fix the field $K=\mathbb{Q}(\sqrt{-1})=\mathbb{Q}(i)$ and consider the groups $U(\eta_n)$ treated as algebraic groups with the obvious integral model $\UU(\eta_n)$, i.e. 
$$\UU(\eta_n)(A)=\{g \in \GL_{2n}(A \otimes_{\ZZ}\ZZ[i])\;|\;\tp{\overline{g}}\eta_n g=\eta_n\}, \;\; A \in \Alg_{\ZZ}.$$ 
The form $F^{(4)}$ is then Hermitian modular of weight $(0, 8)$ and full level $\Gamma=\UU(\eta_4)(\ZZ)$. 

To apply our construction, we consider the diagonal embedding of $\UU(\eta_3)\times \UU(\eta_1)$ into $\UU(\eta_4)$. It is worth noting that the standard representation $\rho_{\mathrm{std}}$ associated with the second factor as in Section~\ref{subsec:Construction} is one-dimensional. Consequently, the decomposition~(Equation \eqref{eq: Sym r decomposition Schur functor}) of $\Sym^r(\rho_{\mathrm{std}}\boxtimes\rho_{\mathrm{std}})$ in terms of Schur functors is trivial, i.e. the $\SS^{\lambda}\boxtimes\SS^{\lambda}$ terms will vanish unless $\lambda=(r)$. That is, we have $\Sym^r(\rho_{\mathrm{std}}\boxtimes\rho_{\mathrm{std}})\simeq \Sym^r(\rho_{\mathrm{std}})\boxtimes\Sym^r(\rho_{\mathrm{std}}),$ and there is no need to take any projection to a $\SS^{\lambda}\boxtimes \SS^{\lambda}$-component in our construction.
\end{remark}

We will show that $F^{(4)}$ vanishes to order 4 along $\mathcal H_3\times\mathcal H_1$ and that the fourth derivative $\mathrm{d}_x^4F^{(4)}|_{\mathcal H_3\times\mathcal H_1}$ is a nonzero vector-valued automorphic cusp form.
We will do this by explicitly computing the Fourier expansion of the derivatives $\mathrm{d}_x^rF^{(4)}|_{\mathcal H_3\times\mathcal H_1}$ and making use of the combinatorial description of $a_i^{(n)}(h)$ given in Equation \eqref{eq:combo}.

\begin{remark}
    Using the notation of Section \ref{subsec:Autoforms}, the weight of $F^{(4)}$ is the 1-dimensional representation $\Delta_{0,8} = \mathbf{1} \boxtimes \det^8$ on $\GL_4(\CC) \times \GL_4(\CC)$. Upon restriction to $\UU(\eta_3) \times \UU(\eta_1)$, its weight is $(\mathbf{1} \boxtimes \det^8) \boxtimes (\det^8 \boxtimes \mathbf{1})$ on $(\GL_3(\CC) \times \GL_3(\CC)) \times (\GL_1(\CC) \times \GL_1(\CC))$. 
    
    The representation on $\GL_1(\CC) \times \GL_1(\CC)$ corresponds to the weight of modular forms on $\UU(\eta_1) \cong \SL_2(\CC)$; in this case, modular forms of weight 8 on $\SL_2(\CC)$.

    Lastly, according to Theorem \ref{thm:HigherDerivatives} (omitting any choice of partitions $\lambda$ of $r=4$), the weight of $\mathrm{d}_x^4 F^{(4)}|_{\mathcal{H}_3 \times \mathcal{H}_1}$ and $d_y^4 F^{(4)}|_{\mathcal{H}_3 \times \mathcal{H}_1}$ are the representations
    \begin{equation} \label{eq:weight of dx4 F}
        (
            \Sym^4(\rho_{\std}) \boxtimes \det^8
        ) 
            \boxtimes 
        (
            \det^8 \boxtimes \det^4
        )
    \end{equation}
    and
    \begin{equation} \label{eq:weight of dy4 F}
        (
            \mathbf{1} 
                \boxtimes 
            (\det^8 \otimes \Sym^4(\rho_{\std})) 
        ) 
            \boxtimes 
        (
            \det^{12} \boxtimes \mathbf{1}
        )
    \end{equation}
    of $\GL_3(\CC) \times \GL_3(\CC) \times \GL_1(\CC) \times \GL_1(\CC)$ respectively. Once more, the $\GL_1(\CC) \times \GL_1(\CC)$ part corresponds to the weight of modular forms on $\UU(\eta_1) \cong \SL_2(\CC)$. In both cases, we obtain modular forms of weight 12 on $\SL_2(\CC)$.
\end{remark}

Set $c_1=8$, $c_2=-15$, and $c_3=7$.
Let $\tau_1\in\mathcal H_3$ and $\tau_2\in\mathcal H_1$.
Then the Fourier expansion of $F^{(4)}|_{\mathcal H_3\times\mathcal H_1}$ is given by
\begin{align}
    &F^{(4)}\left(\begin{bmatrix}\tau_1\\&\tau_2\end{bmatrix}\right)=\sum_ic_i\Theta_i^{(4)}\left(\begin{bmatrix}\tau_1\\&\tau_2\end{bmatrix}\right)\nonumber\\
    &=\sum_ic_i\sum_{h_1,h_2,h_3}a_i^{(4)}\left(\begin{bmatrix}h_1&\tp{\overline{h_3}}\\h_3&h_2\end{bmatrix}\right)\exp\left(2\pi i\tr\left(\begin{bmatrix}h_1&\tp{\overline{h_3}}\\h_3&h_2\end{bmatrix}\begin{bmatrix}\tau_1\\&\tau_2\end{bmatrix}\right)\right)\nonumber\\
    &=\sum_ic_i\sum_{h_1,h_2}\exp\left(2\pi i\tr(h_1\tau_1)\right)\exp(2\pi i\tr(h_2\tau_2))\sum_{h_3}a_i^{(4)}\left(\begin{bmatrix}h_1&\tp{\overline{h_3}}\\h_3&h_2\end{bmatrix}\right).\label{eq:restr}
\end{align}
More generally, we can compute
\begin{align*}
    &F^{(4)}\left(\begin{bmatrix}\tau_1&x\\y&\tau_2\end{bmatrix}\right)=\sum_ic_i\Theta_i^{(4)}\left(\begin{bmatrix}\tau_1&x\\y&\tau_2\end{bmatrix}\right)\\
    &=\sum_ic_i\sum_{h_1,h_2,h_3}a_i^{(4)}\left(\begin{bmatrix}h_1&\tp{\overline{h_3}}\\h_3&h_2\end{bmatrix}\right)\exp\left(2\pi i\tr\left(\begin{bmatrix}h_1&\tp{\overline{h_3}}\\h_3&h_2\end{bmatrix}\begin{bmatrix}\tau_1&x\\y&\tau_2\end{bmatrix}\right)\right)\\
    &=\sum_ic_i\sum_{h_1,h_2,h_3}a_i^{(4)}\left(\begin{bmatrix}h_1&\tp{\overline{h_3}}\\h_3&h_2\end{bmatrix}\right)\exp(2\pi i\tr(h_1\tau_1+h_2\tau_2+h_3x+\tp{\overline{h_3}}y)).
\end{align*}
Then the Fourier expansion of $\mathrm{d}_x^rF^{(4)}|_{\mathcal H_3\times\mathcal H_1}$ is given by
\begin{align}
    &\mathrm{d}_x^rF^{(4)}\left(\begin{bmatrix}\tau_1\\&\tau_2\end{bmatrix}\right)\nonumber\\
    &=\sum_\alpha\sum_ic_i\sum_{h_1,h_2,h_3}a_i^{(4)}\left(\begin{bmatrix}h_1&\tp{\overline{h_3}}\\h_3&h_2\end{bmatrix}\right)(2\pi i)^rh_3^\alpha\exp(2\pi i\tr(h_1\tau_1+h_2\tau_2))\,\mathrm{d}x_\alpha\nonumber\\
    &=(2\pi i)^r\sum_\alpha\sum_{h_1,h_2}\exp(2\pi i\tr(h_1\tau_1))\exp(2\pi i\tr(h_2\tau_2))\sum_ic_i\sum_{h_3}h_3^\alpha a_i^{(4)}\left(\begin{bmatrix}h_1&\tp{\overline{h_3}}\\h_3&h_2\end{bmatrix}\right)\mathrm{d}x_\alpha.\label{eq:deriv}
\end{align}
\begin{proposition}
    \label{prop:vanish0}
    The restriction $F^{(4)}|_{\mathcal H_3\times\mathcal H_1}$ vanishes.
\end{proposition}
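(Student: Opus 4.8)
The plan is to show that every Fourier coefficient of the restricted form vanishes, working directly from the expansion \eqref{eq:restr}. Fixing $\tau_1 \in \mathcal{H}_3$ and $\tau_2 \in \mathcal{H}_1$, the coefficient of $\exp(2\pi i\tr(h_1\tau_1))\exp(2\pi i\tr(h_2\tau_2))$ in \eqref{eq:restr} is
$$\sum_i c_i \sum_{h_3} a_i^{(4)}\left(\begin{bmatrix}h_1&\tp{\overline{h_3}}\\h_3&h_2\end{bmatrix}\right),$$
so it suffices to prove that this quantity is zero for each pair $(h_1, h_2)$, where $h_1$ is a $3\times 3$ Hermitian matrix and $h_2 \in \RR$.

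The key step is to evaluate the inner sum over $h_3$ combinatorially. By \eqref{eq:combo}, the number $a_i^{(4)}$ of the displayed $4\times 4$ matrix counts quadruples $(v_1,v_2,v_3,v_4) \in (\ZZ[i]^8)^4$ whose Gram matrix with respect to $\langle\cdot,\cdot\rangle_i$ equals twice that matrix; here the block $h_1$ records the pairings among $v_1,v_2,v_3$, the entry $h_2$ records $\langle v_4,v_4\rangle_i$, and the block $h_3$ records the cross-pairings $\langle v_4, v_j\rangle_i$. Summing over all admissible $h_3$ therefore removes the cross-pairing constraints and decouples the choice of $(v_1,v_2,v_3)$ from the choice of $v_4$, giving
$$\sum_{h_3} a_i^{(4)}\left(\begin{bmatrix}h_1&\tp{\overline{h_3}}\\h_3&h_2\end{bmatrix}\right) = a_i^{(3)}(h_1)\,a_i^{(1)}(h_2).$$

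To conclude, I would invoke Lemma \ref{lem:lowdeg}: since $\Theta_1^{(1)}=\Theta_2^{(1)}=\Theta_3^{(1)}$, the count $a_i^{(1)}(h_2)$ is independent of $i$, say equal to $a^{(1)}(h_2)$, so the coefficient factors as
$$a^{(1)}(h_2)\sum_i c_i\, a_i^{(3)}(h_1).$$
The remaining sum $\sum_i c_i\, a_i^{(3)}(h_1)$ is exactly the $h_1$-th Fourier coefficient of $F^{(3)} = 8\Theta_1^{(3)} - 15\Theta_2^{(3)} + 7\Theta_3^{(3)}$, which vanishes identically because $F^{(3)} = 0$ by Lemma \ref{lem:lowdeg}. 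Hence every Fourier coefficient of $F^{(4)}|_{\mathcal{H}_3\times\mathcal{H}_1}$ is zero, proving the claim.

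The main obstacle is justifying the decoupling in the middle step with full rigor: one must check that as $h_3$ ranges over the relevant lattice of cross-blocks, the corresponding cross-Gram data $\langle v_4, v_j\rangle_i$ ranges freely and independently of the constraints already imposed on $(v_1,v_2,v_3)$ and on $v_4$. This is immediate from the purely combinatorial description \eqref{eq:combo} --- each admissible quadruple contributes to exactly one value of $h_3$ --- but it is the one place where the argument relies on interpreting the Fourier coefficients as lattice-vector counts rather than manipulating the theta series formally. Notably, the hypothesis that $a_i^{(1)}$ is independent of $i$ is essential; without it the $h_2$-dependence would not separate, and the reduction to the vanishing of $F^{(3)}$ would fail.
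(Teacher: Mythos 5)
Your proposal is correct and follows essentially the same route as the paper's proof: both use the combinatorial description \eqref{eq:combo} to decouple the sum over $h_3$ into $a_i^{(3)}(h_1)\,a_i^{(1)}(h_2)$, then invoke both parts of Lemma \ref{lem:lowdeg} (that $\Theta_1^{(1)}=\Theta_2^{(1)}=\Theta_3^{(1)}$ and that $F^{(3)}=0$) to conclude. The only cosmetic difference is that you phrase the final step coefficient-by-coefficient, while the paper writes it as the identity $F^{(4)}|_{\mathcal H_3\times\mathcal H_1}=F^{(3)}(\tau_1)\Theta^{(1)}(\tau_2)=0$ at the level of theta series.
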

\begin{proof}
    The combinatorial description of $a_i^{(n)}(h)$ given in Equation \eqref{eq:combo} tells us that
    \[\sum_{h_3}a_i^{(4)}\left(\begin{bmatrix}h_1&\tp{\overline{h_3}}\\h_3&h_2\end{bmatrix}\right)=a_i^{(3)}(h_1)a_i^{(1)}(h_2).\]
    Then Equation \eqref{eq:restr} and Lemma \ref{lem:lowdeg} give
    \[F^{(4)}\left(\begin{bmatrix}\tau_1\\&\tau_2\end{bmatrix}\right)=\sum_ic_i\Theta_i^{(3)}(\tau_1)\Theta_i^{(1)}(\tau_2)=F^{(3)}(\tau_1)\Theta^{(1)}(\tau_2)=0.\qedhere\]
\end{proof}
\begin{proposition}
    \label{prop:vanish4}
    The restrictions $\mathrm{d}_x^rF^{(4)}|_{\mathcal H_3\times\mathcal H_1}$ vanish for $r\not\equiv0\pmod{4}$, but the restriction $\mathrm{d}_x^4F^{(4)}|_{\mathcal H_3\times\mathcal H_1}$ does not vanish.
\end{proposition}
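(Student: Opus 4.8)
The plan is to read off both statements from the Fourier expansion \eqref{eq:deriv}, reducing everything to the behavior of a single weighted lattice sum. For a tuple $\alpha=(i_1,\dots,i_r)$ of indices in $\{1,2,3\}$ and fixed data $h_1,h_2$, set
\[
    B_i^\alpha(h_1,h_2)
        :=
    \sum_{h_3} h_3^\alpha\, a_i^{(4)}\!\left(\begin{bmatrix} h_1 & \tp{\overline{h_3}}\\ h_3 & h_2\end{bmatrix}\right),
\]
where $h_1$ is a $3\times 3$ Hermitian matrix, $h_2$ a non-negative integer, and $h_3$ the $1\times 3$ block. By \eqref{eq:combo}, $B_i^\alpha(h_1,h_2)$ is a sum over tuples $(v_1,v_2,v_3,v_4)\in(\ZZ[i]^8)^4$ with $\tfrac12[\langle v_j,v_k\rangle_i]_{j,k\le 3}=h_1$ and $\tfrac12\langle v_4,v_4\rangle_i=h_2$, weighted by $\prod_{l=1}^{r}(h_3)_{i_l}$, where $(h_3)_j=\tfrac12\langle v_j,v_4\rangle_i$. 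The vanishing for $r\not\equiv 0\pmod 4$ will come from a symmetry of these sums, and the nonvanishing at $r=4$ from the explicit evaluation of one coefficient.

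For the vanishing I would exploit the $\ZZ[i]$-module structure of $\ZZ[i]^8$: multiplication by $i$ is an automorphism preserving each Hermitian form $\langle\cdot,\cdot\rangle_i$. Replacing $v_4$ by $iv_4$ while keeping $v_1,v_2,v_3$ fixed is therefore a bijection of the indexing set of $B_i^\alpha(h_1,h_2)$; it fixes $h_1$ and $h_2$, but sends $(h_3)_j=\tfrac12\langle v_j,v_4\rangle_i$ to $\tfrac12\langle v_j,iv_4\rangle_i=-i(h_3)_j$, hence $h_3^\alpha\mapsto(-i)^r h_3^\alpha$. Since the sum is invariant under this bijection,
\[
    B_i^\alpha(h_1,h_2) = (-i)^r\, B_i^\alpha(h_1,h_2),
\]
so $B_i^\alpha(h_1,h_2)=0$ whenever $(-i)^r\neq 1$, i.e. whenever $r\not\equiv 0\pmod 4$. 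As this holds for each $i$ and each $\alpha$, \eqref{eq:deriv} shows every coefficient of $\mathrm{d}_x^rF^{(4)}|_{\mathcal H_3\times\mathcal H_1}$ vanishes, giving the first assertion. This is exactly the place where the Gaussian structure sharpens the naive $v_4\mapsto -v_4$ symmetry, which by itself would force vanishing only for odd $r$.

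For the nonvanishing at $r=4$ I would exhibit one nonzero coefficient. Having established vanishing for $r=1,2,3$ above and recalling Proposition~\ref{prop:vanish0} for $r=0$, Theorem~\ref{thm:HigherDerivatives}(2) applies and shows $\mathrm{d}_x^4F^{(4)}|_{\mathcal H_3\times\mathcal H_1}$ is a (tensor product of) cusp forms; this indicates where to look, namely at coefficients with $h_1$ positive definite and $h_2>0$, the smallest admissible value being $h_2=1$ (so $v_4$ a minimal vector of norm $2$). I would then fix such an $h_2=1$, a positive-definite $h_1$ of small trace, and a $4$-tuple $\alpha$, and compute $\sum_i c_i B_i^\alpha(h_1,1)$ directly from \eqref{eq:combo}: for each of the three Gaussian lattices with Gram matrices $S_1,S_2,S_3$ one enumerates the configurations $(v_1,v_2,v_3)$ realizing $h_1$ together with the norm-$2$ vectors $v_4$, forms the weight $\prod_{l}\tfrac12\langle v_{i_l},v_4\rangle_i$, sums, and combines with $c_1=8$, $c_2=-15$, $c_3=7$. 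Producing one choice of $(h_1,\alpha)$ for which this total is nonzero finishes the proof.

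The main obstacle is this last computation. It is a finite but sizeable enumeration in rank-$8$ Hermitian $\ZZ[i]$-lattices (equivalently rank-$16$ $\ZZ$-lattices), best carried out with computer assistance; moreover the weights $\prod_{l}\tfrac12\langle v_{i_l},v_4\rangle_i$ are Gaussian integers of varying argument, so one must guard against cancellation and may need to test several pairs $(h_1,\alpha)$ before isolating a coefficient that is visibly nonzero. I would also emphasize that one cannot simply import the nonvanishing of $\mathrm{d}_x^4 J|_{\mathcal S_3\times\mathcal S_1}$ from the Siegel setting of \cite{CleryVDGeer}: the Hermitian $x$- and $y$-coordinates are independent and specialize in a constrained way on the Siegel locus $\mathcal S_4\subset\mathcal H_4$, and the relevant target weights differ, so the comparison does not transport the nonvanishing directly. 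This is precisely why the explicit Fourier-coefficient computation is required here.
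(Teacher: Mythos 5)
Your proposal follows essentially the same route as the paper's own proof: both reduce everything to the Fourier coefficients in \eqref{eq:deriv}, rewrite them via \eqref{eq:combo} as weighted lattice sums over tuples $(v_1,v_2,v_3,v_4)$, and kill the coefficients for $r\not\equiv 0\pmod 4$ using the $\ZZ[i]$-symmetry $v_4\mapsto iv_4$ (the paper phrases this as cancellation among $v_4, iv_4, -v_4, -iv_4$; your bijection formulation $B_i^\alpha=(-i)^rB_i^\alpha$ is the same argument). For the nonvanishing, your strategy — exhibit a single nonzero coefficient with $h_2=1$ and $h_1$ positive definite of small trace, by enumerating norm-$2$ vectors in the three rank-$8$ Gaussian lattices — is also exactly the paper's, but you leave the decisive enumeration unexecuted, whereas the paper commits to the concrete choice $h_1=\mathrm{Id}_3$, $h_2=1$, $\alpha=(4,0,0)$, factors the sum so that only the sets $\mathcal{C}_i$ of $480$ norm-$2$ vectors (enumerated via a Cholesky decomposition of $S_i$) enter, and evaluates the coefficient to the nonzero value $1981808640$. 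So the vanishing half of your argument is complete and correct, but as written the nonvanishing half is a plan rather than a proof; your own caveat about possible cancellation among the Gaussian-integer weights is precisely why the statement is not established until one such coefficient is actually computed.
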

\begin{proof}
    Equation \eqref{eq:deriv} tells us that each Fourier coefficient of $\mathrm{d}_x^rF^{(4)}|_{\mathcal H_3\times\mathcal H_1}$ is of the form
    \[(2\pi i)^r\sum_ic_i\sum_{h_3}h_3^\alpha a_i^{(4)}\left(\begin{bmatrix}h_1&\tp{\overline{h_3}}\\h_3&h_2\end{bmatrix}\right)\]
    for fixed $h_1$, $h_2$, and $\alpha$.
    Equation \eqref{eq:combo} lets us rewrite this as
    \[(2\pi i)^r\sum_ic_i\sum_{\substack{v_1,v_2,v_3\\\langle v_j,v_k\rangle_i=2(h_1)_{kj}}}\sum_{\substack{v_4\\\langle v_4,v_4\rangle_i=2h_2}}\langle v_1,v_4\rangle_i^{\alpha_1}\langle v_2,v_4\rangle_i^{\alpha_2}\langle v_3,v_4\rangle_i^{\alpha_3}\]
    where $\alpha=(\alpha_1,\alpha_2,\alpha_3)$ with $\alpha_1+\alpha_2+\alpha_3=r$.
    
    If $r\not\equiv0\pmod{4}$, then the values of the inner sum at $v_4$, $iv_4$, $-v_4$, and $-iv_4$ will cancel with each other, so every Fourier coefficient of $\mathrm{d}_x^rF^{(4)}|_{\mathcal H_3\times\mathcal H_1}$ vanishes.

    To show that the restriction $\mathrm{d}_x^4F^{(4)}|_{\mathcal H_3\times\mathcal H_1}$ does not vanish, it is enough to find one Fourier coefficient that does not vanish.
    Set $h_1=I_3$, $h_2=1$, and $\alpha=(4,0,0)$.
    Then the Fourier coefficient in question is given by
    \[(2\pi i)^4\sum_ic_i\sum_{\substack{v_1,v_2,v_3\\\langle v_j,v_k\rangle_i=2\delta_{jk}}}\sum_{\substack{v_4\\\langle v_4,v_4\rangle_i=2}}\langle v_1,v_4\rangle^4.\]
    For each $i$, the number of vectors $v$ satisfying $\langle v,v\rangle_i=2$ is exactly 480.
    For each $i$, let $\mathcal C_i$ denote the set of these 480 vectors.
    Then we can write the sum as
    \[\sum_ic_i\sum_{v_1\in\mathcal C_i}\left[\sum_{\substack{v_2,v_3\in\mathcal C_i\\\langle v_j,v_k\rangle_i=\delta_{jk}}}1\right]\left[\sum_{v_4\in\mathcal C_i}\langle v_1,v_4\rangle^4\right]\]
    which we can compute to be exactly 1981808640.

    In order to enumerate the 480 elements of each $\mathcal C_i$, we found it helpful to use the Cholesky decomposition $S_i=d_i^{-1}\tp{\overline{L_i}}D_iL_i$, so that the problem of finding $\tp{\overline{v}}S_iv=2$ becomes the simpler problem of finding $\tp{\overline{L_iv}}D_i(L_iv)=2d_i$.
\end{proof}
\begin{theorem}\label{thm:exworks}
    The restriction $\mathrm{d}_x^4F^{(4)}|_{\mathcal H_3\times\mathcal H_1}$ is a nonzero vector-valued automorphic form.
    It can be written as a pure tensor $M\otimes\Delta$.
\end{theorem}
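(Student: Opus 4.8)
The plan is to deduce the theorem from Theorem~\ref{thm:HigherDerivatives}, applied with $r=4$, feeding in the explicit Fourier-coefficient computations of Propositions~\ref{prop:vanish0} and~\ref{prop:vanish4} to verify its hypotheses, and then to identify the $\mathcal H_1$-factor of the resulting tensor by invoking the one-dimensionality of the space of weight-$12$ cusp forms for $\SL_2(\ZZ)$.

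First I would verify the vanishing hypothesis of Theorem~\ref{thm:HigherDerivatives}. Proposition~\ref{prop:vanish0} supplies the case $s=0$, namely $F^{(4)}|_{\mathcal H_3\times\mathcal H_1}=0$, while Proposition~\ref{prop:vanish4} supplies the cases $s=1,2,3$, since none of these is divisible by $4$. Hence $\mathrm{d}_x^s F^{(4)}|_{\mathcal H_3\times\mathcal H_1}=0$ for every $0\le s<4$, which is exactly the hypothesis needed to apply Theorem~\ref{thm:HigherDerivatives} to $F^{(4)}$ along the diagonal embedding $\UU(\eta_3)\times\UU(\eta_1)\hookrightarrow\UU(\eta_4)$ fixed in Remark~\ref{rem:example}. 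As noted there, the standard representation attached to the $\U(\eta_1)$-factor is one-dimensional, so the Schur decomposition~\eqref{eq: Sym r decomposition Schur functor} collapses and only the partition $\lambda=(4)$ contributes. Theorem~\ref{thm:HigherDerivatives} then shows directly that $\mathrm{d}_x^4 F^{(4)}|_{\mathcal H_3\times\mathcal H_1}$ lies in the tensor product of the space of automorphic forms on $\U(\eta_3)$ of the non-scalar weight $\rho=\Sym^4(\rho_{\std})\boxtimes\det^8$ with the space of weight-$12$ modular forms for $\SL_2(\ZZ)$, the two weights being those recorded in~\eqref{eq:weight of dx4 F}. Since $\rho$ is non-scalar the form is vector-valued, and by Proposition~\ref{prop:vanish4} it is nonzero; this settles the first assertion.

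It remains to pin down the $\mathcal H_1$-factor. By Lemma~\ref{lem:schottky}, $F^{(4)}$ is a cusp form, so the cuspidality clause of Theorem~\ref{differential operator intro} forces the two automorphic forms produced above to be cusp forms. Thus $\mathrm{d}_x^4 F^{(4)}|_{\mathcal H_3\times\mathcal H_1}$ in fact lies in the tensor product of the space of weight-$\rho$ cusp forms on $\U(\eta_3)$ with $S_{12}(\SL_2(\ZZ))$. Cuspidality is decisive here: the space $M_{12}(\SL_2(\ZZ))$ of all weight-$12$ modular forms is two-dimensional, spanned by $\Delta$ and the Eisenstein series $E_{12}$, whereas the cusp-form subspace $S_{12}(\SL_2(\ZZ))$ is one-dimensional and spanned by $\Delta$. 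Because this second tensor factor is one-dimensional, every element of the tensor product is automatically a pure tensor $M\otimes\Delta$, and the nonvanishing from Proposition~\ref{prop:vanish4} guarantees $M\neq0$. This yields the theorem.

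I expect essentially all of the genuine difficulty to reside in the inputs rather than in this assembly: the vanishing of the lower-order derivatives and, above all, the nonvanishing of $\mathrm{d}_x^4 F^{(4)}|_{\mathcal H_3\times\mathcal H_1}$, which Proposition~\ref{prop:vanish4} establishes through an explicit lattice-point count over the three even unimodular Gaussian lattices. Granting those, the one remaining conceptual point is the appeal to cuspidality, which simultaneously forces the expression to be a single pure tensor and confines its $\mathcal H_1$-factor to the line spanned by $\Delta$; without it one could conclude only membership in the two-dimensional space $M_{12}(\SL_2(\ZZ))$.
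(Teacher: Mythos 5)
Your proof is correct, and its overall skeleton matches the paper's: both apply Theorem~\ref{thm:HigherDerivatives} (via Remark~\ref{rem:example}) with the vanishing hypotheses supplied by Proposition~\ref{prop:vanish0} (for $s=0$) and Proposition~\ref{prop:vanish4} (for $s=1,2,3$), and both use the nonvanishing in Proposition~\ref{prop:vanish4} to get a nonzero vector-valued form. Where you genuinely diverge is in the final step, pinning the $\mathcal H_1$-factor to $\Delta$. The paper writes the form as $M_0\otimes E_{12}+M\otimes\Delta$ inside the two-dimensional space $M_{12}(\SL_2(\ZZ))$ and then kills $M_0$ by comparing with the explicit Fourier expansion \eqref{eq:deriv}: since the theta coefficients $a_i^{(4)}(h)$ are supported on positive-semidefinite $h$, the terms with $h_2=0$ force $h_3=0$ and hence $h_3^\alpha=0$, so the constant term in $\tau_2$ vanishes. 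You instead invoke cuspidality of $F^{(4)}$ (Lemma~\ref{lem:schottky}) together with the cusp-form preservation statement --- precisely Proposition~\ref{prop:CuspForms}, which is the rigorous form of the clause you cite from Theorem~\ref{differential operator intro} and which applies here since the example is in case (UT) --- to land directly in $S_\rho\otimes S_{12}(\SL_2(\ZZ))$, and then use $\dim S_{12}(\SL_2(\ZZ))=1$. Both arguments are sound; yours is more conceptual, avoids a second pass through the Fourier expansion, and has the added benefit of showing outright that $M$ is itself a cusp form (as asserted in Theorem~\ref{vector-valued forms from Schottky} but not made explicit in the paper's proof of Theorem~\ref{thm:exworks}), while the paper's computation is self-contained at this point and does not need Proposition~\ref{prop:CuspForms} or the cuspidality of $F^{(4)}$ at all.
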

\begin{proof}
    Theorem \ref{thm:HigherDerivatives} and Remark \ref{rem:example} show that if the restrictions $\mathrm{d}_x^sF^{(4)}|_{\mathcal H_3\times\mathcal H_1}$ vanish for all $s<r$, then the restriction $\mathrm{d}_x^rF^{(4)}|_{\mathcal H_3\times\mathcal H_1}$ is a vector-valued automorphic form.
    Then Propositions \ref{prop:vanish0} and \ref{prop:vanish4} tell us that $\mathrm{d}_x^4F^{(4)}|_{\mathcal H_3\times\mathcal H_1}$ is a nonzero vector-valued automorphic form.
    It is a tensor product of vector-valued automorphic forms for $\mathcal U(\eta_3)$ and scalar-valued automorphic forms for $\mathcal U(\eta_1)$ of weight 12.
    Then we can write $\mathrm{d}_x^4F^{(4)}|_{\mathcal H_3\times\mathcal H_1}=M_0\otimes E_{12}+M\otimes\Delta$.
    But comparing Fourier expansions with Equation \eqref{eq:deriv} forces $M_0=0$ and $\mathrm{d}_x^4F^{(4)}|_{\mathcal H_3\times\mathcal H_1}=M\otimes\Delta$.
\end{proof}
In contrast, the restriction $F^{(4)}|_{\mathcal H_2\times\mathcal H_2}$ does not vanish.
\begin{proposition}
    The restriction $F^{(4)}|_{\mathcal H_2\times\mathcal H_2}$ does not vanish.
\end{proposition}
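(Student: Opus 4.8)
The plan is to mirror the computation used for $\mathcal{H}_3\times\mathcal{H}_1$ (Propositions~\ref{prop:vanish0} and \ref{prop:vanish4}), but now splitting the $4\times 4$ index matrix into four $2\times 2$ blocks. First I would write the general index as $\begin{bmatrix} h_1 & \tp{\overline{h_3}} \\ h_3 & h_2 \end{bmatrix}$ with $h_1, h_2, h_3$ of size $2\times 2$, and use the combinatorial description \eqref{eq:combo} to sum over the off-diagonal block $h_3$. Exactly as in the proof of Proposition~\ref{prop:vanish0}, summing over $h_3$ decouples the vectors indexed by $h_1$ from those indexed by $h_2$, giving $\sum_{h_3} a_i^{(4)}\!\left(\begin{bmatrix} h_1 & \tp{\overline{h_3}} \\ h_3 & h_2\end{bmatrix}\right) = a_i^{(2)}(h_1)\, a_i^{(2)}(h_2)$. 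Substituting into the Fourier expansion of $F^{(4)}|_{\mathcal{H}_2\times\mathcal{H}_2}$ then yields the clean identity
\[ F^{(4)}\!\left(\begin{bmatrix}\tau_1\\&\tau_2\end{bmatrix}\right) = \sum_i c_i\, \Theta_i^{(2)}(\tau_1)\,\Theta_i^{(2)}(\tau_2). \]

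Next I would exploit the vanishing $F^{(2)} = 8\Theta_1^{(2)} - 15\Theta_2^{(2)} + 7\Theta_3^{(2)} = 0$ from Lemma~\ref{lem:lowdeg} to eliminate $\Theta_3^{(2)} = \tfrac{1}{7}\big(15\Theta_2^{(2)} - 8\Theta_1^{(2)}\big)$. A short algebraic manipulation collapses the three-term bilinear expression into a single pure tensor,
\[ \left.F^{(4)}\right|_{\mathcal{H}_2\times\mathcal{H}_2}(\tau_1, \tau_2) = \tfrac{120}{7}\,\big(\Theta_1^{(2)} - \Theta_2^{(2)}\big)(\tau_1)\,\big(\Theta_1^{(2)} - \Theta_2^{(2)}\big)(\tau_2). \]
In particular the restriction is nonzero if and only if $\Theta_1^{(2)} \neq \Theta_2^{(2)}$; note that $\Theta_1^{(2)} = \Theta_2^{(2)}$ would, via the same relation, force $\Theta_1^{(2)} = \Theta_2^{(2)} = \Theta_3^{(2)}$ and hence the vanishing of the restriction.

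It therefore remains to produce a single $2\times 2$ Hermitian index $h$ with $a_1^{(2)}(h) \neq a_2^{(2)}(h)$; equivalently, I would exhibit one nonvanishing Fourier coefficient of $F^{(4)}|_{\mathcal{H}_2\times\mathcal{H}_2}$, namely $\tfrac{120}{7}\big(a_1^{(2)}(h) - a_2^{(2)}(h)\big)^2$ at the diagonal index $(h, h)$. This is a finite computation of representation numbers, carried out exactly as in the proof of Proposition~\ref{prop:vanish4}: one enumerates the relevant vectors $v \in \ZZ[i]^8$ for each form (using the Cholesky decomposition $S_i = d_i^{-1}\tp{\overline{L_i}}D_i L_i$ to reduce to a simpler norm equation) and counts pairs $(v_1, v_2)$ with prescribed inner products $\langle v_j, v_k\rangle_i = 2h_{kj}$. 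I expect this last verification to be the main obstacle: unlike the degree-$1$ coefficients, which coincide for all three forms ($\Theta_1^{(1)} = \Theta_2^{(1)} = \Theta_3^{(1)}$ by Lemma~\ref{lem:lowdeg}), the degree-$2$ coefficients must be computed directly, and one must take care to choose an index $h$ at which the representation numbers genuinely separate the first two forms rather than one at which they happen to agree. Once such an $h$ is found, the displayed formula shows $F^{(4)}|_{\mathcal{H}_2\times\mathcal{H}_2} \neq 0$.
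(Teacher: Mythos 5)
Your reduction is correct and the algebra checks out: summing over the off-diagonal block gives $F^{(4)}|_{\mathcal H_2\times\mathcal H_2}(\tau_1,\tau_2)=\sum_i c_i\,\Theta_i^{(2)}(\tau_1)\,\Theta_i^{(2)}(\tau_2)$, and eliminating $\Theta_3^{(2)}$ via the relation of Lemma~\ref{lem:lowdeg} does collapse this to $\tfrac{120}{7}\bigl(\Theta_1^{(2)}-\Theta_2^{(2)}\bigr)(\tau_1)\,\bigl(\Theta_1^{(2)}-\Theta_2^{(2)}\bigr)(\tau_2)$ (the diagonal coefficients $8+\tfrac{64}{7}$ and $-15+\tfrac{225}{7}$ both equal $\tfrac{120}{7}$, and the cross terms are $-\tfrac{120}{7}$). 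This is sharper than what the paper establishes: the paper argues by contradiction that if the restriction vanished, then for each fixed $\tau_2$ the vector $\bigl(8\Theta_1^{(2)}(\tau_2),-15\Theta_2^{(2)}(\tau_2),7\Theta_3^{(2)}(\tau_2)\bigr)$ would be a linear relation among the $\Theta_i^{(2)}$, hence a scalar multiple of $(8,-15,7)$, forcing $\Theta_1^{(2)}=\Theta_2^{(2)}=\Theta_3^{(2)}$. Both routes therefore bottom out at the same statement: the three degree-two theta series are not all equal.

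That statement is exactly where your proposal has a genuine gap. You correctly reduce to showing $\Theta_1^{(2)}\neq\Theta_2^{(2)}$, but you only describe a computation that would verify it (enumerate the 480 vectors of norm 2, search for an index $h$ with $a_1^{(2)}(h)\neq a_2^{(2)}(h)$) without exhibiting such an $h$ or any representation numbers; you yourself flag the risk that a chosen $h$ fails to separate the two forms. That risk is real in this setting: $\Theta_1^{(1)}=\Theta_2^{(1)}=\Theta_3^{(1)}$ by Lemma~\ref{lem:lowdeg}, and $\Theta_1^{(n)}$ and $\Theta_3^{(n)}$ even agree after restriction to Siegel space, so coincidence of representation numbers is a genuine phenomenon here, and a priori your search is not guaranteed to terminate. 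The paper closes this step with no computation at all, by citing \cite[Theorem~3.1(b)]{HermitianSchottky}: the combination $-8\Theta_1^{(2)}+3\Theta_2^{(2)}+5\Theta_3^{(2)}$ is a \emph{nonzero} cusp form, which (together with the fact that the $\Theta_i^{(2)}$ are not cusp forms) shows the $\Theta_i^{(2)}$ span a two-dimensional space. The same citation finishes your argument instantly: if $\Theta_1^{(2)}=\Theta_2^{(2)}$, then the relation $8\Theta_1^{(2)}-15\Theta_2^{(2)}+7\Theta_3^{(2)}=0$ forces all three series to coincide, whence $-8\Theta_1^{(2)}+3\Theta_2^{(2)}+5\Theta_3^{(2)}=0$, a contradiction. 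With that substitution in place of the unexecuted computation, your proof is complete and in fact yields a stronger conclusion than the paper's, namely an explicit pure-tensor formula for the restriction.
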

\begin{proof}
    Recall from Lemma \ref{lem:lowdeg} that the theta series $\Theta_i^{(2)}$ satisfy the linear relation $8\Theta_1^{(2)}-15\Theta_2^{(2)}+7\Theta_3^{(2)}=0$.
    There are no further relations since the theta series $\Theta_i^{(2)}$ span a vector space of dimension 2.
    One way to see this is to observe that the theta series $\Theta_i^{(2)}$ are not cusp forms, but  \cite[Theorem 3.1(b)]{HermitianSchottky} states that the linear combination $-8\Theta_1^{(2)}+3\Theta_2^{(2)}+5\Theta_3^{(2)}$ is a nonzero cusp form.
    
    Now suppose that the restriction $F^{(4)}|_{\mathcal H_2\times\mathcal H_2}$ did vanish.
    Then, as in the proof of Proposition \ref{prop:vanish0}, we would have have
    \[F^{(4)}\left(\begin{bmatrix}\tau_1\\&\tau_2\end{bmatrix}\right)=8\Theta_1^{(2)}(\tau_1)\Theta_1^{(2)}(\tau_2)-15\Theta_2^{(2)}(\tau_1)\Theta_2^{(2)}(\tau_2)+7\Theta_3^{(2)}(\tau_1)\Theta_3^{(2)}(\tau_2)=0.\]
    For each fixed $\tau_2$, this is a linear relation on the functions $\Theta_i^{(2)}(\tau_1)$.
    This relation must be a multiple of the relation $8\Theta_1^{(2)}-15\Theta_2^{(2)}+7\Theta_3^{(2)}=0$.
    But this would require $\Theta_1^{(2)}(\tau_2)=\Theta_2^{(2)}(\tau_2)=\Theta_3^{(2)}(\tau_2)$ for all $\tau_2$, which is false.
\end{proof}

\section{Proofs of Main Results}\label{sec:Proofs}

We now prove the main assertion, Theorem~\ref{thm:HigherDerivatives} (as well as Proposition~\ref{prop:FirstDerivative}, which is a special case). Fix all the notation ($(k, l), \U_i, \U, \HH^{(i)}, \Gamma_i, \Gamma \dots $ etc.) as in Sections~\ref{subsec:C_Restriction} and \ref{subsec:Construction}.

\subsection{Modularity} Firstly, we prove that the functions resulting from our construction obey the expected modularity rules. The key ingredient for this part of the proof is the following lemma on the differential of the action of $\Gamma$ on $\HH$.   

\begin{lemma}[{\cite[Lemma~3.4]{ShimuraArithmeticity}}]\label{lem:Shimura}
$$\mathrm{d}(\gamma \tau)={\tp{\lambda_{\gamma}(\tau)}^{-1}\mathrm{d} \tau \mu_{\gamma}(\tau)^{-1}},\;\;\; \gamma=\begin{bmatrix}A & B \\ C & D \end{bmatrix}\in \U(\RR),\;\; \tau \in \HH.$$
\end{lemma}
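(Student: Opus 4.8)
The plan is to establish the identity by differentiating the fractional-linear action directly and then converting the resulting matrix coefficient into the automorphy factor $\lambda_\gamma(\tau)$ by means of the defining quadratic relations of the unitary group. Since the spaces $\HH$ and the factors $\lambda_\gamma,\mu_\gamma$ are set up in parallel in the cases (UB) and (UT), the computation should be formally identical in both, the only difference being whether the relations come from $\tp{\overline{\gamma}}I_{m,n}\gamma = I_{m,n}$ or from $\tp{\overline{\gamma}}\eta_n\gamma = \eta_n$. I would therefore treat the (UT) case in detail and note that (UB) is verbatim the same.

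First I would set $\mu := \mu_\gamma(\tau) = C\tau + D$ and $w := \gamma\tau = (A\tau+B)\mu^{-1}$, and differentiate, using $\mathrm{d}(A\tau+B) = A\,\mathrm{d}\tau$ together with the matrix identity $\mathrm{d}(\mu^{-1}) = -\mu^{-1}(\mathrm{d}\mu)\mu^{-1} = -\mu^{-1}(C\,\mathrm{d}\tau)\mu^{-1}$. Collecting the two resulting terms and using $(A\tau+B)\mu^{-1} = w$ gives
\[
    \mathrm{d}(\gamma\tau) = \bigl(A - (\gamma\tau)C\bigr)\,\mathrm{d}\tau\,\mu_\gamma(\tau)^{-1}.
\]
It then remains to identify the left factor, i.e. to prove the purely algebraic identity $\bigl(A - (\gamma\tau)C\bigr)\,\tp{\lambda_\gamma(\tau)} = \mathrm{Id}$, whereupon the lemma follows upon inverting.

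This identity is the heart of the matter. In the (UT) case one has $\tp{\lambda_\gamma(\tau)} = \tau\,\tp{\overline{C}} + \tp{\overline{D}}$, and I would expand the product $\bigl(A - wC\bigr)\bigl(\tau\,\tp{\overline{C}} + \tp{\overline{D}}\bigr)$. The obstruction is the cross term $wC\tau\,\tp{\overline{C}}$; the key manipulation is to rewrite $C\tau = \mu - D$, so that $wC\tau = w\mu - wD = (A\tau+B) - wD$. Substituting this, the terms still involving $w$ cancel precisely when one invokes the ``mixed'' block relation $C\,\tp{\overline{D}} = D\,\tp{\overline{C}}$, and what survives is exactly $A\,\tp{\overline{D}} - B\,\tp{\overline{C}}$. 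Recognizing this as $\mathrm{Id}$ is the final step, and it is another of the block relations encoding that $\gamma$ is unitary. Accordingly I would, at the outset, record these relations once: rewriting $\tp{\overline{\gamma}}\eta_n\gamma = \eta_n$ in the dual form $\gamma\,\eta_n^{-1}\,\tp{\overline{\gamma}} = \eta_n^{-1}$ and reading off the blocks yields both $A\,\tp{\overline{D}} - B\,\tp{\overline{C}} = \mathrm{Id}$ and $C\,\tp{\overline{D}} = D\,\tp{\overline{C}}$.

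The main obstacle is entirely bookkeeping: keeping the conjugates, transposes, and block positions consistent between the definition of $\lambda_\gamma$ and the quadratic relations, since a single misplaced conjugate destroys the cancellation. Once the two relations above are in hand, the identity drops out in a few lines. For the (UB) case the same scheme applies with $\lambda_\gamma(\tau) = \overline{B}\,\tp{\tau} + \overline{A}$, using instead the block relations $A\,\tp{\overline{A}} - B\,\tp{\overline{B}} = \mathrm{Id}$ and $C\,\tp{\overline{A}} = D\,\tp{\overline{B}}$ that come from $\gamma\,I_{m,n}\,\tp{\overline{\gamma}} = I_{m,n}$.
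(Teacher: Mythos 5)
Your proof is correct, but the comparison here is degenerate: the paper does not prove Lemma \ref{lem:Shimura} at all --- the bracketed reference to \cite[Lemma~3.4]{ShimuraArithmeticity} is the entirety of its justification. So your computation does not diverge from the paper's argument so much as supply the self-contained verification that the paper delegates to the literature. I checked the details and they hold up: the differentiation step gives $\mathrm{d}(\gamma\tau) = \bigl(A - (\gamma\tau)C\bigr)\,\mathrm{d}\tau\,\mu_\gamma(\tau)^{-1}$ as you claim; in the (UT) case, expanding $\bigl(A - (\gamma\tau)C\bigr)\bigl(\tau\,\tp{\overline{C}} + \tp{\overline{D}}\bigr)$ and substituting $C\tau = \mu_\gamma(\tau) - D$ leaves exactly $A\,\tp{\overline{D}} - B\,\tp{\overline{C}} + (\gamma\tau)\bigl(D\,\tp{\overline{C}} - C\,\tp{\overline{D}}\bigr)$, and the dual relation $\gamma\,\eta_n^{-1}\,\tp{\overline{\gamma}} = \eta_n^{-1}$ (equivalently $\gamma\,\eta_n\,\tp{\overline{\gamma}} = \eta_n$, since $\eta_n^{-1} = -\eta_n$) does yield both $A\,\tp{\overline{D}} - B\,\tp{\overline{C}} = \mathrm{Id}$ and $C\,\tp{\overline{D}} = D\,\tp{\overline{C}}$; the (UB) case runs in parallel with your relations from $\gamma\,I_{m,n}\,\tp{\overline{\gamma}} = I_{m,n}$, and all the products are square of consistent sizes even when $m \neq n$, so the rectangular blocks cause no trouble. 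Two small points worth making explicit in a polished write-up: the one-sided identity $\bigl(A - (\gamma\tau)C\bigr)\,\tp{\lambda_\gamma(\tau)} = \mathrm{Id}$ suffices to conclude $A - (\gamma\tau)C = \tp{\lambda_\gamma(\tau)}^{-1}$ only because both factors are square matrices of the same size, and the matrix product rule you invoke is legitimate because the entries of $\mathrm{d}\tau$ are $1$-forms, which commute with the function entries of $A$, $C$, and $\mu_\gamma(\tau)^{-1}$. Since this lemma is the engine behind Corollary \ref{cor:DiffirentialInCorner} and Proposition \ref{prop:Modularity}, replacing the black-box citation with a verified computation is a genuine, if modest, addition.
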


\begin{corollary}\label{cor:DiffirentialInCorner}
Let $\gamma=\eta(\gamma_1, \gamma_2)$ be as in Remark~\ref{rem:MatricesAsTensors}. Then for every $s>0,$ $\left.\mathrm{d}^s_x(\gamma \tau)\right\vert_{\substack{x=0\\y=0}}$ is of the form
$$\left.\mathrm{d}^s_x(\gamma \tau)\right\vert_{\substack{x=0\\y=0}}=\begin{bmatrix} 0 & * \\ 0& 0 \end{bmatrix},$$
that is, it is a matrix of symmetric forms with all forms outside of the $x$-coordinates equal to $0$. Similarly, $\left.\mathrm{d}^s_y(\gamma \tau)\right\vert_{\substack{x=0\\y=0}}$ is of the form
$$\left.\mathrm{d}^s_x(\gamma \tau)\right\vert_{\substack{x=0\\y=0}}=\begin{bmatrix} 0 & 0 \\ *& 0 \end{bmatrix},$$ where $*$ is the block of $y$-coordinates.
\end{corollary}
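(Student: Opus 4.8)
The goal is Corollary \ref{cor:DiffirentialInCorner}, which asserts that when we restrict the higher differentials of the action map $\tau \mapsto \gamma\tau$ to the locus $x=y=0$ (for $\gamma = \eta(\gamma_1, \gamma_2)$ block-diagonal), the $x$-projection has nonzero entries only in the off-diagonal $x$-block, and symmetrically for the $y$-projection. The plan is to leverage Lemma \ref{lem:Shimura}, which gives the clean formula $\mathrm{d}(\gamma\tau) = \tp{\lambda_\gamma(\tau)}^{-1}\,\mathrm{d}\tau\,\mu_\gamma(\tau)^{-1}$, and to exploit the block-diagonal structure of the automorphy factors at points of $\HH^{(1)}\times\HH^{(2)}$.

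\textbf{Approach.} First I would record that on the diagonal locus $x=y=0$, i.e. at $\tau = \iota(\tau_1,\tau_2)$, both automorphy factors are block-diagonal: $\lambda_\gamma(\tau) = \diag(\lambda_{\gamma_1}(\tau_1), \lambda_{\gamma_2}(\tau_2))$ and $\mu_\gamma(\tau) = \diag(\mu_{\gamma_1}(\tau_1), \mu_{\gamma_2}(\tau_2))$, as already noted in Section \ref{subsec:C_Restriction}. The key structural input is that $\lambda_\gamma(\tau)$ and $\mu_\gamma(\tau)$ depend on $\tau$ only through the entries $\tp{\tau}$ and $\tau$ respectively (they are \emph{affine} in $\tau$), so their restrictions to $x=y=0$ and their differentials are controlled. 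I would then differentiate the Lemma \ref{lem:Shimura} formula $s$ times and restrict to $x=y=0$, organizing the computation by the block decomposition induced by $\begin{bmatrix}\tau_1 & x\\ y & \tau_2\end{bmatrix}$.

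\textbf{Key steps.} The cleanest route is to reduce to the first-order statement and then induct. For $s=1$, the formula of Lemma \ref{lem:Shimura} evaluated at $x=y=0$ reads $\mathrm{d}(\gamma\tau)|_{x=y=0} = \tp{\lambda}^{-1}(\mathrm{d}\tau)\mu^{-1}$ with $\lambda,\mu$ block-diagonal; writing $\mathrm{d}\tau$ in the block form $\begin{bmatrix}\mathrm{d}\tau_1 & \mathrm{d}x\\ \mathrm{d}y & \mathrm{d}\tau_2\end{bmatrix}$ and conjugating by block-diagonal matrices preserves each block type, so the $x$-projection of $\mathrm{d}(\gamma\tau)$ lands in the off-diagonal upper block and the $y$-projection in the lower block — this gives the claim for $s=1$. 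For $s>1$, I would differentiate repeatedly, using the product rule on $\tp{\lambda(\tau)}^{-1}\,\mathrm{d}\tau\,\mu(\tau)^{-1}$: each further derivative either hits the constant form $\mathrm{d}\tau$ (producing no new factor) or hits one of the automorphy factors. Since $\lambda$ and $\mu$ are affine in $\tau$, their higher differentials are constant matrices whose restriction to $x=y=0$ retains the relevant block structure; one tracks that taking an $x$-derivative inserts an off-diagonal block factor. The combinatorial bookkeeping is that every term in $\mathrm{d}^s_x(\gamma\tau)|_{x=y=0}$ is a product of block-diagonal factors ($\tp{\lambda}^{-1}$, $\mu^{-1}$, and their derivatives evaluated at the diagonal point) sandwiching factors that are each supported in the off-diagonal $x$-block, and a product of block-diagonal matrices with a single off-diagonal $x$-block matrix remains supported in the off-diagonal $x$-block.

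\textbf{Main obstacle.} The genuine difficulty is the bookkeeping for $s>1$: when differentiating a product of three $\tau$-dependent factors multiple times, the Leibniz expansion produces many terms, and I must verify that \emph{each} surviving term, after restriction to $x=y=0$, is supported in the off-diagonal $x$-block rather than leaking into the diagonal or $y$-blocks. The crucial facts that make this work are (i) the affineness of $\lambda,\mu$ in $\tau$, so their differentials are constant and their values at $x=y=0$ are block-diagonal; and (ii) that differentiating $\mu(\tau) = C\tau + D$ or $\lambda(\tau) = \overline{C}\tp{\tau}+\overline{D}$ in an $x$-coordinate produces, at the diagonal point, a matrix supported only in the off-diagonal position matching the $x$-block. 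I expect that isolating these two facts and then arguing that ``a product of block-diagonal matrices with at least one purely-off-diagonal $x$-block factor is purely off-diagonal in the $x$-block'' closes the induction cleanly, so the entire statement follows from linear-algebra closure properties of the relevant block patterns rather than from any delicate analytic estimate.
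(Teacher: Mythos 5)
Your proposal is correct and follows essentially the same route as the paper: the $s=1$ case is read off directly from Lemma~\ref{lem:Shimura} after specializing to $\gamma=\eta(\gamma_1,\gamma_2)$, restricting to $x=y=0$, and projecting onto the $\mathrm{d}x$-coordinates, while the case $s>1$ is obtained by differentiating the identity of Lemma~\ref{lem:Shimura} repeatedly. In fact, your block-bookkeeping for $s>1$ (block-diagonal factors sandwiching upper-right-block factors, using affineness of $\lambda,\mu$ in $\tau$) supplies the details that the paper compresses into the single sentence ``the case of $s>1$ is similar.''
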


\begin{proof}
Let us argue for the case of $x$-coordinates only. The case $s=1$ follows directly from Lemma~\ref{lem:Shimura}, since the identity
\begin{equation}\label{eqn:ShimuraLemma}
\mathrm{d}(\gamma\tau) = {{\tp{\lambda_{\gamma}(\tau)}}^{-1}\mathrm{d} \tau \mu_{\gamma}(\tau)^{-1}}
\end{equation}
yields, after specializing to $\gamma=\eta(\gamma_1, \gamma_2),$ setting $x=y=0$ and projecting onto the $\mathrm{d}x$-coordinates, the identity
$$\left.\mathrm{d}_x (\gamma\tau)\right\vert_{\substack{x=0\\y=0}}=\begin{bmatrix}0 & \tp{\lambda_{\gamma_1}(\tau_1)}^{-1}\mathrm{d}x\,{\mu_{\gamma_2}(\tau_2)}^{-1}\\ 0 &  0  \end{bmatrix}.$$ 

The case of $s>1$ is similar, only starting with an identity obtained by differentiating Equation \eqref{eqn:ShimuraLemma} multiple times. 
\end{proof}

\begin{proposition}\label{prop:Modularity}
    In the situation of Theorem~\ref{thm:HigherDerivatives}, if 
        $\mathrm{d}_{x}^s f|_{\HH^{(1)}\times\HH^{(2)}}$ 
    vanishes for all $s<r$ then 
        $\mathrm{d}^r_{x, \lambda} f|_{\HH^{(1)}\times\HH^{(2)}}$ 
    satisfies the modularity rule
    \begin{align*}
        &\mathrm{d}^r_{x, \lambda}f|_{\HH^{(1)}\times\HH^{(2)}}(\tau_1, \tau_2)=
            \\
        &= 
        \left(
            \rho^{+, \lambda}_{1, (k,l)}(
                \lambda_{\gamma_1}(\tau_1),\mu_{\gamma_1}(\tau_1)
            )
                \otimes 
            \rho^{-, \lambda}_{2, (k,l)}(
                \lambda_{\gamma_2}(\tau_2),\mu_{\gamma_2}(\tau_2)
            )
        \right)^{-1} 
        \mathrm{d}^r_{x, \lambda} f|_{\HH^{(1)}\times\HH^{(2)}}(
            \gamma_1\tau_1, \gamma_2\tau_2
        )
    \end{align*}
    where $\tau_1 \in \HH^{(1)}, \tau_2 \in \HH^{(2)}, \gamma_1 \in \Gamma_1$ and $\gamma_2 \in \Gamma_2$. 
    
    Similarly, assuming 
        $\mathrm{d}_{y}^s f|_{\HH^{(1)}\times\HH^{(2)}}$ 
   vanishes for all $s<r$, the form 
        $\mathrm{d}^r_{y, \lambda} f|_{\HH^{(1)}\times\HH^{(2)}}$ 
    satisfies the analogous modularity rule with $\rho^{+, \lambda}_{1, (k,l)}$ replaced by $\rho^{-, \lambda}_{1, (k,l)}$ and with $\rho^{-, \lambda}_{2, (k,l)}$ replaced by $\rho^{+, \lambda}_{2, (k,l)}$.
\end{proposition}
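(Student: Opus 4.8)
The plan is to differentiate the automorphy relation of $f$ and exploit the vanishing hypothesis to annihilate all but one term on each side. Since $f$ is scalar of weight $(k,l)$, its defining relation reads $f(\gamma\tau)=\det(\lambda_\gamma(\tau))^{k}\det(\mu_\gamma(\tau))^{l}f(\tau)$ for every $\gamma=\eta(\gamma_1,\gamma_2)\in\Gamma$. First I would apply $\mathrm{d}^r_x(-)$ to both sides and restrict to the locus $x=y=0$, i.e. to $\iota(\tau_1,\tau_2)$, handling the two sides separately.

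For the left-hand side I would expand $\mathrm{d}^r_x(f\circ\gamma)$ by the higher chain rule $\mathrm{d}^r(f\circ\gamma)=\sum_a\sum_{b_1+\cdots+b_a=r}\mathrm{d}^af\circ(\mathrm{d}^{b_1}\gamma,\ldots,\mathrm{d}^{b_a}\gamma)$. After projecting onto the $x$-differentials and setting $x=y=0$, Corollary~\ref{cor:DiffirentialInCorner} guarantees that every factor $\mathrm{d}^{b_j}_x\gamma$ capable of contributing is supported in the $x$-block, so the tangent vectors fed into $\mathrm{d}^af$ all lie in the $x$-directions and only $\mathrm{d}^a_xf$ at $\gamma\tau|_{x=y=0}=\iota(\gamma_1\tau_1,\gamma_2\tau_2)$ survives. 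As this point lies on $\HH^{(1)}\times\HH^{(2)}$, the hypothesis $\mathrm{d}^a_xf|_{\HH^{(1)}\times\HH^{(2)}}=0$ for $a<r$ forces $a=r$ and hence $b_1=\cdots=b_r=1$. The surviving term is the pullback of $\mathrm{d}^r_xf|_{\iota(\gamma_1\tau_1,\gamma_2\tau_2)}$ along the linear map $\mathrm{d}x\mapsto\tp{\lambda_{\gamma_1}(\tau_1)}^{-1}\mathrm{d}x\,\mu_{\gamma_2}(\tau_2)^{-1}$ of Lemma~\ref{lem:Shimura}, i.e. the action $\beta\mapsto\beta*(\tp{\lambda_{\gamma_1}(\tau_1)}^{-1},\tp{(\mu_{\gamma_2}(\tau_2))}^{-1})$.

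For the right-hand side the Leibniz rule expands $\mathrm{d}^r_x$ of the product into a symmetric sum of terms $\mathrm{d}^s_x(\det(\lambda_\gamma)^k\det(\mu_\gamma)^l)\cdot\mathrm{d}^{r-s}_xf$; upon restriction each $s\geq1$ term carries a factor $\mathrm{d}^{r-s}_xf|_{\HH^{(1)}\times\HH^{(2)}}$ of order $<r$, which vanishes, so only $s=0$ remains. Evaluating the block-diagonal automorphy factors from Section~\ref{subsec:C_Restriction} turns the scalar into $\det(\lambda_{\gamma_1}(\tau_1))^{k}\det(\lambda_{\gamma_2}(\tau_2))^{k}\det(\mu_{\gamma_1}(\tau_1))^{l}\det(\mu_{\gamma_2}(\tau_2))^{l}$ times $\mathrm{d}^r_xf|_{\HH^{(1)}\times\HH^{(2)}}(\tau_1,\tau_2)$. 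Equating the two sides, solving for $\mathrm{d}^r_xf|_{\HH^{(1)}\times\HH^{(2)}}(\tau_1,\tau_2)$, and converting the right $*$-action into the left action via \eqref{eq:def rho prime} replaces the transposed arguments by $(\lambda_{\gamma_1}(\tau_1)^{-1},\mu_{\gamma_2}(\tau_2)^{-1})$, giving the transformation law for the full $\Sym^r$-valued form. Projecting onto the $\SS^\lambda\boxtimes\SS^\lambda$-summand of \eqref{eq: Sym r decomposition Schur functor}, which is $\GL_{m_1}(\CC)\times\GL_{n_2}(\CC)$-equivariant, makes $\rho'(\lambda_{\gamma_1}(\tau_1)^{-1},\mu_{\gamma_2}(\tau_2)^{-1})$ act there as $\SS^\lambda(\lambda_{\gamma_1}(\tau_1))^{-1}\otimes\SS^\lambda(\mu_{\gamma_2}(\tau_2))^{-1}$; combined with the determinant scalar this is exactly $(\rho^{+,\lambda}_{1,(k,l)}(\lambda_{\gamma_1}(\tau_1),\mu_{\gamma_1}(\tau_1))\otimes\rho^{-,\lambda}_{2,(k,l)}(\lambda_{\gamma_2}(\tau_2),\mu_{\gamma_2}(\tau_2)))^{-1}$, as claimed.

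The $y$-statement follows verbatim, now using the lower-left block $\tp{\lambda_{\gamma_2}(\tau_2)}^{-1}\mathrm{d}y\,\mu_{\gamma_1}(\tau_1)^{-1}$ of Lemma~\ref{lem:Shimura}, which interchanges the roles of the two factors and produces the weights $\rho^{-,\lambda}_{1,(k,l)}$ and $\rho^{+,\lambda}_{2,(k,l)}$. I expect the main obstacle to be the bookkeeping rather than any deep idea: keeping the transposes straight across the $*$-to-$\rho'$ conversion, and checking that the surviving pullback genuinely restricts to $\SS^\lambda$ of the correct automorphy matrices on each summand. The truncation of both the chain-rule and Leibniz expansions to a single term each is the conceptual heart, but it is immediate once Corollary~\ref{cor:DiffirentialInCorner} and the vanishing hypothesis are in place.
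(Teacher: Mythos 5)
Your proposal is correct and follows essentially the same route as the paper's proof: differentiate the (rearranged) automorphy identity, truncate the Leibniz expansion on one side and the higher chain-rule expansion on the other using the vanishing hypothesis together with Corollary~\ref{cor:DiffirentialInCorner}, invoke Lemma~\ref{lem:Shimura} to identify the surviving pullback, convert the $*$-action to $\rho'$ via \eqref{eq:def rho prime}, and finally project onto the $\SS^{\lambda}\boxtimes\SS^{\lambda}$-summand. The transpose bookkeeping in your $*$-to-$\rho'$ conversion and the identification of the resulting weights with $\rho^{+,\lambda}_{1,(k,l)}$ and $\rho^{-,\lambda}_{2,(k,l)}$ both check out.
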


\begin{proof}
Let us argue for the operator $\left.\mathrm{d}^r_{x} f\right\vert_{\substack{x=0\\y=0}}$ only (the proof for $\left.\mathrm{d}^r_{y} f\right\vert_{\substack{x=0\\y=0}}$ is completely analogous). Fix the element $\gamma=\eta(\gamma_1, \gamma_2) \in \Gamma$ and related notation just as in Remark~\ref{rem:MatricesAsTensors}, and note that
 for $\tau=\iota(\tau_1, \tau_2),$ we have $\gamma\tau=\iota(\gamma_1\tau_1, \gamma_2\tau_2)$ and
$$\lambda_{\gamma}(\tau)=\begin{bmatrix}\lambda_{\gamma_1}(\tau_1)& \\ & \lambda_{\gamma_2}(\tau_2)\end{bmatrix},\;\;\; \mu_{\gamma}(\tau)=\begin{bmatrix}\mu_{\gamma_1}(\tau_1)& \\ & \mu_{\gamma_2}(\tau_2)\end{bmatrix}\,.$$  Let us rewrite the modular identity
\begin{equation}\label{eqn:Modularity}
f(\tau)=\det(\lambda_{\gamma}(\tau))^{-k}\det(\mu_{\gamma}(\tau))^{-l}f(\gamma \tau)
\end{equation} 
 as 
\begin{align*}
\det(\lambda_{\gamma}(\tau))^{k}\det(\mu_{\gamma}(\tau))^{l}f(\tau)=f(\gamma \tau)
\end{align*}
Applying the operator $\left.\mathrm{d}^r_x(-)\right\vert_{\substack{x=0\\y=0}}$ then yields
\begin{align*}
\det(\lambda_{\gamma}(\tau))^{k}\det(\mu_{\gamma}(\tau))^{l}\mathrm{d}_x^r f\left(\begin{bmatrix}\tau_1  \\ & \tau_2\end{bmatrix}\right)=\left.\mathrm{d}^r_x\left(f(\gamma \tau)\right)\right\vert_{\substack{x=0\\y=0}},
\end{align*}
since all the remaining terms on the left-hand side coming from the product rule contain $\mathrm{d}_x^s f\left(\begin{bmatrix}\tau_1  \\ & \tau_2\end{bmatrix}\right)$ for some $s<r$ and hence vanish. Rearranging the resulting equation then yields
\begin{align*}
&\mathrm{d}_x^r f\left(\begin{bmatrix}\tau_1 & \\ & \tau_2\end{bmatrix}\right)=
\det(\lambda_{\gamma}(\tau))^{-k}\det(\mu_{\gamma}(\tau))^{-l}\left.\mathrm{d}^r_x\left(f(\gamma \tau)\right)\right\vert_{\substack{x=0\\y=0}}\\
&=\det(\lambda_{\gamma_1}(\tau_1))^{-k}\det(\lambda_{\gamma_2}(\tau_2))^{-k}\det(\mu_{\gamma_1}(\tau_1))^{-l}\det(\mu_{\gamma_2}(\tau_2))^{-l}\left.\mathrm{d}^r_x\left(f(\gamma \tau)\right)\right\vert_{\substack{x=0\\y=0}}\;.
\end{align*}
By the chain rule for $\mathrm{d}^r\left(f(\gamma \tau)\right)$, we have 
$$\mathrm{d}^r\left(f(\gamma \tau)\right)=\sum_{a=1}^r\sum_{b_1+b_2+\dots+b_a=r}\mathrm{d}^af \circ \left(\mathrm{d}^{b_1}(\gamma \tau),\mathrm{d}^{b_2}(\gamma \tau), \dots, \mathrm{d}^{b_a}(\gamma \tau)\right),$$ 
which gives
\begin{align*}
    \left.
        \mathrm{d}_x^r
        \left(
            f(\gamma \tau)
        \right)
    \right\vert_{\substack{x=0\\y=0}}
        &=
    \sum_{a=1}^r \sum_{b_1+b_2+\dots+b_a=r}
        \left( 
            \left. 
                \mathrm{d}^a f
            \right\vert_{\substack{x=0\\y=0}} 
        \right) 
            \circ 
        \left.
            \left(
                \mathrm{d}_x^{b_1}(\gamma \tau),
                \mathrm{d}_x^{b_2}(\gamma \tau), 
                \dots, 
                \mathrm{d}_x^{b_a}(\gamma \tau)
            \right)
        \right\vert_{\substack{x=0\\y=0}} \\
        &=
    \sum_{a=1}^r \sum_{b_1+b_2+\dots+b_a=r}
        \left(
            \left. 
                \mathrm{d}_x^a f
            \right\vert_{\substack{x=0\\y=0}} 
        \right) 
            \circ 
        \left.
            \left(
                \mathrm{d}_x^{b_1}(\gamma \tau),
                \mathrm{d}_x^{b_2}(\gamma \tau), 
                \dots, 
                \mathrm{d}_x^{b_a}(\gamma \tau)
            \right)
        \right\vert_{\substack{x=0\\y=0}}\\
        &=
    \left(
        \left.
            \mathrm{d}_x^r f
        \right\vert_{\substack{x=0\\y=0}}
    \right) 
        \circ 
    \left.
        \left(
            \mathrm{d}_x(\gamma \tau),
            \mathrm{d}_x(\gamma \tau), 
            \dots, 
            \mathrm{d}_x(\gamma \tau)
        \right)
    \right\vert_{\substack{x=0\\y=0}},  
\end{align*}
where the second equality follows from Corollary~\ref{cor:DiffirentialInCorner} and the third one from the assumption that $\left. \mathrm{d}^a_{x}f\right\vert_{\substack{x=0\\y=0}}=0$ when $a<r$. Lemma~\ref{lem:Shimura} now leads to the expression 
\begin{align*}
    &\left.
        \mathrm{d}_x^r
        \left(
            f(\gamma \tau)
        \right)
    \right\vert_{\substack{x=0\\y=0}} \\
    &=\left(
        \mathrm{d}_x^rf
    \right)
    \left(
        \begin{bmatrix}
            \gamma_1\tau_1 & \\ 
            & \gamma_2\tau_2
        \end{bmatrix}
    \right)
        \circ 
    \left(
        \tp{\lambda_{\gamma_1}(\tau_1)}^{-1}
        \mathrm{d}x\, 
        \mu_{\gamma_2}(\tau_2)^{-1}, 
        \ldots, 
        \tp{\lambda_{\gamma_1}(\tau_1)}^{-1}
        \mathrm{d}x\, 
        \mu_{\gamma_2}(\tau_2)^{-1}
    \right)\,.
\end{align*}

Then, by definition of $\rho'$ as in \eqref{eq:def rho prime}, we further have
\begin{align*}
    &\left.
        \mathrm{d}_x^r
        \left(
            f(\gamma \tau)
        \right)
    \right\vert_{\substack{x=0\\y=0}} 
        =
    \rho'(
        \lambda_{\gamma_1}(\tau_1)^{-1},
        \mu_{\gamma_2}(\tau_2)^{-1}
    )
    \left(
        \mathrm{d}_x^r f
        \left(
            \begin{bmatrix}
                \gamma_1\tau_1 & \\ 
                & \gamma_2\tau_2
            \end{bmatrix}
        \right)
    \right)\,,
\end{align*}
and altogether, we obtain
\begin{align*}
    \mathrm{d}^r_x f 
    \left(
        \begin{bmatrix}
            \tau_1 & \\ 
            & \tau_2
        \end{bmatrix}
    \right) 
        = &
    \det(\lambda_{\gamma_1}(\tau_1))^{-k}
    \det(\lambda_{\gamma_2}(\tau_2))^{-k}
    \det(\mu_{\gamma_1}(\tau_1))^{-l}
    \det(\mu_{\gamma_2}(\tau_2))^{-l} \\ 
        &\times
    \rho'(
        \lambda_{\gamma_1}(\tau_1)^{-1},
        \mu_{\gamma_2}(\tau_2)^{-1}
    )
    \mathrm{d}^r_x f 
    \left(
        \begin{bmatrix}
            \gamma_1\tau_1 & \\ 
            & \gamma_2\tau_2
        \end{bmatrix}
    \right)\,.
\end{align*}

Finally, projecting onto the $\SS^{\lambda}\boxtimes\SS^\lambda$-component of $\rho'$ yields the desired result.
\end{proof}

\subsection{Holomorphicity at cusps and tensor product decomposition}\label{subsec:HolomorphicityAtCusps}

Proposition~\ref{prop:Modularity} shows that $\mathrm{d}^r_{x, \lambda}f|_{\HH^{(1)}\times\HH^{(2)}}$ yields a vector-valued function that transforms the same way as the tensor product of automorphic forms in Theorem~\ref{thm:HigherDerivatives}. To conclude that $\mathrm{d}^r_{x, \lambda}f|_{\HH^{(1)}\times\HH^{(2)}}$ \textit{is} such a tensor product, we employ the following linear-algebraic lemma, going back to Witt \cite{Witt}.

\begin{lemma}\label{lem:WittLA}
Consider a map $F: X\times Y \rightarrow V \otimes_{\CC} W$ where $V, W$ are finite-dimensional $\CC$-vector spaces and $X, Y$ are arbitrary sets. Let $L_X$ ($L_Y$, resp.) be a chosen finite-dimensional subspace of maps $X \rightarrow V$ ($Y\rightarrow W$, resp.).  Fix a choice of basis $\{b_i\}_{i=1}^{n}$ of $V$ and $\{c_j\}_{j=1}^{m}$ of $W$, and assume that
\begin{enumerate}[(1)] 
\item{for all $y \in Y$ and all $j$, the projection of $F|_{X \times \{y\}}$ onto $V\otimes c_j \simeq V$ belongs to $L_X$,}  
\item{for all $x \in X$ and all $i$, the projection of $F|_{\{x\} \times Y}$ onto $b_i\otimes W \simeq W$ belongs to $L_Y$.}
\end{enumerate}
Then $F$ can be written in the form 
$$F=\sum_k G_k \otimes H_k, \;\; G_k \in L_X,\; H_k \in L_Y.$$ 
\end{lemma}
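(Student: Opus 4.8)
The plan is to expand $F$ in the chosen bases and track which coefficient functions lie in $L_X$ or $L_Y$. Writing $F(x,y) = \sum_{i,j} f_{ij}(x,y)\, b_i \otimes c_j$ with scalar-valued coefficient functions $f_{ij}: X \times Y \to \CC$, the goal is to produce a finite decomposition $F = \sum_k G_k \otimes H_k$ with $G_k \in L_X$ and $H_k \in L_Y$. First I would fix a basis $\{\phi_p\}_{p=1}^{P}$ of the finite-dimensional space $L_X$ and a basis $\{\psi_q\}_{q=1}^{Q}$ of $L_Y$; writing $\phi_p = \sum_i \phi_{p,i}\, b_i$ (with $\phi_{p,i}: X \to \CC$) and similarly $\psi_q = \sum_j \psi_{q,j}\, c_j$, the candidate decomposition I am aiming for has the shape $F = \sum_{p,q} c_{pq}\, \phi_p \otimes \psi_q$ for suitable scalars $c_{pq} \in \CC$, since any such expression automatically has the required form. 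The content of the lemma is that hypotheses (1) and (2) force $F$ to lie in the (finite-dimensional) subspace $L_X \otimes L_Y$ of the space of all maps $X \times Y \to V \otimes W$.

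The key reduction is to the scalar case. For each fixed $y$, hypothesis (1) says the map $x \mapsto \sum_i f_{ij}(x,y)\, b_i$ lies in $L_X$ for every $j$; equivalently, for each $j$ the function $x \mapsto (f_{ij}(x,y))_i$ lies in $L_X$ as a $V$-valued function. Symmetrically, hypothesis (2) says that for each fixed $x$ and each $i$, the function $y \mapsto (f_{ij}(x,y))_j$ lies in $L_Y$ as a $W$-valued function. The heart of the argument is the classical bilinear fact: if a function $g: X \times Y \to \CC$ has the property that every ``slice'' $g(\cdot, y)$ lies in a fixed finite-dimensional space $U_X$ of functions on $X$ and every slice $g(x, \cdot)$ lies in a fixed finite-dimensional space $U_Y$ of functions on $Y$, then $g \in U_X \otimes U_Y$, i.e.\ $g$ is a finite sum of products $u(x)v(y)$ with $u \in U_X$, $v \in U_Y$. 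I would prove this by choosing a basis $\{\phi_p\}$ of $U_X$, expanding each slice $g(\cdot, y) = \sum_p c_p(y)\,\phi_p$ with uniquely determined coefficients $c_p(y)$ (uniqueness from linear independence of the $\phi_p$), and then showing each coefficient function $c_p: Y \to \CC$ lies in $U_Y$. To extract $c_p(y)$ as a \emph{linear functional} applied to the slice, I would pick points $x_1, \dots, x_P \in X$ at which the $\phi_p$ are linearly independent as vectors (possible precisely because the $\phi_p$ are linearly independent functions), so that $c_p(y)$ is a fixed linear combination of the values $g(x_1,y), \dots, g(x_P,y)$; since each $g(x_s, \cdot) \in U_Y$, the same linear combination shows $c_p \in U_Y$.

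Granting this scalar statement, I would apply it componentwise. The subtlety is that the hypotheses are stated for the $V$-valued and $W$-valued projections, not directly for the individual scalar entries $f_{ij}$, so I must convert between the two. Since $L_X$ is a space of $V$-valued maps, I would let $U_X$ be the (finite-dimensional) space of all scalar functions arising as coordinate projections $x \mapsto \langle b_i^\vee, \phi(x)\rangle$ of elements $\phi \in L_X$, for all $i$, and similarly define $U_Y$; then hypothesis (1) gives that each $f_{ij}(\cdot, y) \in U_X$ and hypothesis (2) gives each $f_{ij}(x, \cdot) \in U_Y$, so the scalar lemma yields $f_{ij} \in U_X \otimes U_Y$ for every pair $(i,j)$. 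Reassembling $F = \sum_{i,j} f_{ij}\, b_i \otimes c_j$ then exhibits $F$ as a finite sum of products of a scalar function in $U_X$ times $b_i$ (an element of $L_X$ after the identifications, by hypothesis (1) again) and a scalar function in $U_Y$ times $c_j$; collecting terms gives the desired $F = \sum_k G_k \otimes H_k$ with $G_k \in L_X$, $H_k \in L_Y$. I expect the main obstacle to be precisely this bookkeeping step: verifying that the componentwise decomposition, which a priori only lands in $U_X \otimes U_Y$, actually regroups into genuine tensors $G_k \otimes H_k$ with \emph{both} factors in the originally specified spaces $L_X$ and $L_Y$ rather than in some larger auxiliary space. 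This is where the two hypotheses must be used in tandem, and care is needed to ensure the recombination respects the $V$- and $W$-valued structure rather than merely the scalar entries.
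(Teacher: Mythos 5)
Your scalar lemma is correct and is essentially the classical argument (it is Witt's \emph{Satz A}, which the paper simply cites rather than reproving), but the reassembly step contains a genuine gap — and it is located at exactly the point where the whole content of the vector-valued statement lives. The parenthetical claim that ``a scalar function in $U_X$ times $b_i$'' is ``an element of $L_X$ after the identifications, by hypothesis (1) again'' is false: hypothesis (1) only says that the full $V$-valued projection $x \mapsto \sum_i f_{ij}(x,y)\,b_i$ lies in $L_X$; it says nothing about an individual coordinate function multiplied by a single basis vector. Concretely, take $X=\CC$, $V=\CC^2$, and $L_X = \mathrm{span}\{\phi\}$ with $\phi(x) = x\,b_1 + x^2\,b_2$. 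Then $U_X = \mathrm{span}\{x, x^2\}$, but $x\,b_1 \notin L_X$. So your componentwise conclusion $f_{ij} \in U_X \otimes U_Y$ lands in a strictly larger space than needed, and regrouping $\sum_{i,j} f_{ij}\, b_i \otimes c_j$ only exhibits $F$ as a sum of tensors with factors in $U_X\otimes V$ and $U_Y\otimes W$, not in $L_X$ and $L_Y$. You flag this yourself as ``the main obstacle,'' but you never close it. It \emph{can} be closed — for instance by showing, via the fact that point evaluations composed with coordinate functionals span the dual of a finite-dimensional space of functions, that $F$ lies both in $L_X\otimes(U_Y\otimes W)$ and in $(U_X\otimes V)\otimes L_Y$, and then invoking the linear-algebra identity $(L_X\otimes \mathcal{B})\cap(\mathcal{A}\otimes L_Y)=L_X\otimes L_Y$ — but none of this is in your write-up, so as it stands the proof is incomplete.

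It is worth knowing how the paper sidesteps the issue, since its entire proof consists of a reduction that makes the bookkeeping trivial: instead of passing to coordinate functions on the same domains, it absorbs the basis indices into the domains, viewing a $V$-valued function on $X$ as a scalar function on $X\times\{1,\dots,n\}$ and a $W$-valued function on $Y$ as a scalar function on $Y\times\{1,\dots,m\}$. Under this identification $L_X$ and $L_Y$ themselves become finite-dimensional spaces of scalar functions on the augmented sets, hypotheses (1) and (2) become exactly the slice conditions of the scalar lemma, and Witt's result applied to $\tilde F\bigl((x,i),(y,j)\bigr) = f_{ij}(x,y)$ immediately yields a decomposition whose factors translate back to elements of $L_X$ and $L_Y$. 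The auxiliary spaces $U_X$, $U_Y$ — the source of your difficulty — never appear.
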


Before proceeding with the proof, we note that the assumptions of Lemma~\ref{lem:WittLA} are independent of the choices of bases. 

\begin{proof}
When $V$ and $W$ are one-dimensional, we may identify $V, W$ and $V\otimes W$ with $\CC$. Then the claim is the content of \cite[Satz A]{Witt}. In general, expressing all the involved vector functions as coordinate functions with respect to the bases $\{b_i\}_{i=1}^{n}$ of $V$, $\{c_j\}_{j=1}^m$ of $W$ and $\{b_i\otimes c_j\}_{i, j}$ of $V \otimes W$, resp.,the vector-valued functions $X \rightarrow V$ ($Y \rightarrow W$ and $X \times Y \rightarrow V\otimes W$, resp.) can be treated as scalar-valued functions $X \times \{1, \dots, n\}\rightarrow \CC$ ($Y \times \{1, \dots, m\}\rightarrow \CC$ and $X\times Y \times \{1, \dots, n\} \times \{1, \dots, m\}\rightarrow \CC$, resp.) in the obvious manner. This reduces the claim of the Lemma to the scalar-valued case.
\end{proof}

\begin{proposition}\label{prop:Cusps}
In the situation of Proposition~\ref{prop:Modularity}, the form $\mathrm{d}^r_{x, \lambda}f|_{\HH^{(1)}\times\HH^{(1)}}$ satisfies the assumptions of Lemma~\ref{lem:WittLA} with
$$X=\HH^{(1)},\;\; L_X=M_{\rho^{+, \lambda}_{1, (k,l)}}(\Gamma_1)\;\; \text{and }\;\;Y=\HH^{(2)},\;\; L_Y=M_{\rho^{-, \lambda}_{2, (k,l)}}(\Gamma_2)\,.$$
Similarly, the form $\mathrm{d}^r_{y, \lambda}f|_{\HH^{(1)}\times\HH^{(1)}}$ satisfies the assumptions of Lemma~\ref{lem:WittLA} with
$$X=\HH^{(1)},\;\; L_X=M_{\rho^{-, \lambda}_{1, (k,l)}}(\Gamma_1)\;\; \text{and }\;\;Y=\HH^{(2)},\;\; L_Y=M_{\rho^{+, \lambda}_{2, (k,l)}}(\Gamma_2)\,.$$
\end{proposition}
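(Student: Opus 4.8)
The plan is to verify the two hypotheses of Lemma~\ref{lem:WittLA} directly. Since $L_X$ and $L_Y$ are by definition spaces of automorphic forms, the content of each hypothesis is that, for a fixed value of one variable and a fixed basis vector of the other tensor factor, the corresponding coordinate projection is a genuine automorphic form of the prescribed weight and level: namely, it is holomorphic, satisfies the prescribed modularity rule, and---when the relevant factor is quasi-split over $\QQ$ of signature $(1,1)$---is holomorphic at all cusps. As already noted, these hypotheses do not depend on the chosen bases, so I would fix once and for all the standard bases of the representation spaces of $\rho^{+, \lambda}_{1, (k,l)}$ and $\rho^{-, \lambda}_{2, (k,l)}$. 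I treat $\mathrm{d}^r_{x, \lambda} f$; the argument for $\mathrm{d}^r_{y, \lambda} f$ is identical with the weights interchanged as in Proposition~\ref{prop:Modularity}.

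Holomorphicity is immediate: since $f$ is holomorphic on $\HH$, so is every partial derivative $\partial_{x_\alpha} f$, and restriction to the locus $x = y = 0$ preserves this, so $\mathrm{d}^r_{x, \lambda} f|_{\HH^{(1)}\times\HH^{(2)}}$ is holomorphic jointly in $(\tau_1, \tau_2)$, whence each coordinate projection is separately holomorphic in $\tau_1$ and in $\tau_2$. Modularity follows from Proposition~\ref{prop:Modularity}: specializing its modularity rule to $\gamma_2 = \mathrm{Id}$, for which $\lambda_{\gamma_2}(\tau_2) = \mu_{\gamma_2}(\tau_2) = \mathrm{Id}$ so that $\rho^{-, \lambda}_{2, (k,l)}$ acts trivially, gives for every $\gamma_1 \in \Gamma_1$ the identity
\[
\mathrm{d}^r_{x, \lambda} f(\tau_1, \tau_2) = \left( \rho^{+, \lambda}_{1, (k,l)}\!\left(\lambda_{\gamma_1}(\tau_1), \mu_{\gamma_1}(\tau_1)\right)^{-1} \otimes \mathrm{Id} \right) \mathrm{d}^r_{x, \lambda} f(\gamma_1 \tau_1, \tau_2).
\]
Projecting onto any fixed basis vector of the second tensor factor shows that, for each fixed $\tau_2$, the resulting $\tau_1$-function obeys the weight-$\rho^{+, \lambda}_{1, (k,l)}$ modularity rule for $\Gamma_1$; symmetrically, taking $\gamma_1 = \mathrm{Id}$ and projecting onto a basis vector of the first factor gives the $\Gamma_2$-modularity of weight $\rho^{-, \lambda}_{2, (k,l)}$. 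This establishes both hypotheses up to the cusp condition.

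The main obstacle is holomorphicity at cusps, which is needed only when one of $\U_1, \U_2$ is quasi-split over $\QQ$ of signature $(1,1)$; for all higher-signature factors it holds automatically by Koecher's principle (Proposition~\ref{prop:Koecher}). In the signature-$(1,1)$ case I would argue through Fourier expansions. Differentiating the Fourier expansion of $f$ term by term (exactly as in the computation producing Equation~\eqref{eq:deriv}) shows that each Fourier coefficient of a coordinate projection of $\mathrm{d}^r_{x, \lambda} f|_{\HH^{(1)}\times\HH^{(2)}}$ is, up to the scalar $(2\pi i)^r$ and a monomial in the frequency, a linear combination of Fourier coefficients of $f$ indexed by block matrices whose relevant diagonal block is the $(1,1)$-frequency being tested. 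Since $f$ is an automorphic form, its Fourier coefficients are supported on positive-semidefinite frequency matrices; as any principal submatrix of a positive-semidefinite matrix is again positive-semidefinite, the tested diagonal block must be non-negative, so the projected form has no Fourier coefficients at negative frequencies and is therefore holomorphic at $\infty$. The point requiring genuine care is the passage to the remaining cusps: to test holomorphicity of the projection slashed by an arbitrary $\beta$ in the relevant copy of $\SL_2(\QQ)$, I would match this with slashing $f$ by $\eta(\mathrm{Id}, \beta) \in \U(\QQ)$ (respectively $\eta(\beta, \mathrm{Id})$), observe that the construction is equivariant for such translations, and then reapply the positive-semidefiniteness argument to the automorphic form $f \,||\, \eta(\mathrm{Id}, \beta)$. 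Verifying this compatibility between the differential operator and the slash action is where the real work lies.
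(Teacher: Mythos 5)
Your verification of modularity (specializing Proposition~\ref{prop:Modularity} to $\gamma_2=\mathrm{Id}$ and to $\gamma_1=\mathrm{Id}$), your treatment of joint holomorphicity, and your Fourier-expansion argument for holomorphicity at cusps all agree with the paper's proof \emph{in the case (UT)}, where $f$ genuinely has an expansion $f(\tau)=\sum_h \mathbf{c}(h)\exp(2\pi i\tr(h\tau))$ with constant coefficients supported (by Koecher's principle, Proposition~\ref{prop:Koecher}) on positive-semidefinite $h$, and where the ``principal block of a positive-semidefinite matrix is non-negative'' argument closes the case.

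There is, however, a genuine gap: the proposition is stated for both choices of coordinates fixed in Section~\ref{subsec:C_Restriction}, and the cusp condition must also be checked in case (UB) --- e.g.\ $\U$ of signature $(m,n)$ with $m\neq n$ and $\U_1$ quasi-split of signature $(1,1)$ --- as well as when $\U$ has equal signature but is not quasi-split over $\QQ$. In those cases $f$ has no Fourier expansion with constant coefficients on the bounded domain; following Section~\ref{subsec:UTs} and Remark~\ref{rem:ThmForVariants}, one must pass to the unbounded realization $\widetilde{\HH}_{m,n}$ and work with a Fourier--Jacobi expansion $f(\tau,u)=\sum_h \mathbf{c}(w,u_2;h)\exp(\cdots)$ whose coefficients are theta functions of some of the coordinates. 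For $\mathrm{d}^r_x$ your argument carries over nearly verbatim, but for $\mathrm{d}^r_y$ it does not: the $y$-coordinates include the variables $w$ on which the coefficients $\mathbf{c}$ depend, so the differentiated expansion involves the partial derivatives $\partial_{w_\beta}\mathbf{c}(0,u_2;h)$ rather than $\mathbf{c}$ times a monomial in the frequency, and positive-semidefiniteness of a principal block is no longer the whole story. The paper's proof needs the further observation that for $h$ not positive-semidefinite, Koecher's principle makes $\mathbf{c}(\,\cdot\,;h)$ the \emph{identically zero function} of $u$, hence all of its partial derivatives vanish as well. This extra idea, and the third (equal-signature, non-quasi-split) case, are missing from your proposal. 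Conversely, the step you single out as ``where the real work lies'' --- equivariance of the operator under slashing by $\eta(\beta,\mathrm{Id})$ --- is not a serious obstacle: Lemma~\ref{lem:Shimura} and the chain-rule computation in the proof of Proposition~\ref{prop:Modularity} are valid for arbitrary $\gamma=\eta(\gamma_1,\gamma_2)$ with $\gamma_i\in\U_i(\RR)$, not only for elements of $\Gamma$, so the compatibility you want follows from the computation you already invoked.
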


\begin{proof}
As long as neither of the unitary groups $\U_1, \U_2$ is of signature $(1, 1)$ or quasi--split over $\QQ$, to verify whether the form $\mathrm{d}^r_{x, \lambda}f|_{\HH^{(1)}\times\HH^{(1)}}$ or $\mathrm{d}^r_{y, \lambda}f|_{\HH^{(1)}\times\HH^{(1)}}$, after restriction and projection as in Lemma~\ref{lem:WittLA}, produces automorphic forms of the indicated level and weight comes down to verifying the appropriate modularity rule. In this case, the conclusion immediately follows from Proposition~\ref{prop:Modularity}.

When $\U_1$ or $\U_2$ is of signature $(1, 1)$ and is quasi-split over $\QQ$, we additionally need to verify the holomorphicity at cusps condition. Note that in this case, there is no need to take any projection to $\SS^{\lambda}$ components, and we therefore suppress $\lambda$ from the notation to simplify from now on (cf. Remark~\ref{rem:example}).
 
Let us assume that $\U_1$ is of signature $(1, 1)$ and is quasi-split over $\QQ$,  fix $\tau_2 \in \HH^{(2)}$ and let us verify the holomorphicity at cusps in the case of $\mathrm{d}^r_{x}f|_{\HH^{(1)}\times\{\tau_2\}}$ and $\mathrm{d}^r_{y}f|_{\HH^{(1)}\times\{\tau_2\}}$. The arguments are the same in the remaining cases. Acting on $f$ by $\eta(\beta)$ where $\beta \in \SL_2(\QQ),$ it is enough to verify holomorphicity at $\infty$.

We consider first the case (UT), i.e. the situation when $\U(\RR)$ is identified with $\U(\eta_n)$ and $\U$ is itself quasi-split. In this case, it is enough to even consider $\mathrm{d}^r_{x}f|_{\HH^{(1)}\times\{\tau_2\}}$ only, as the reasoning for $\mathrm{d}^r_{y}f|_{\HH^{(1)}\times\{\tau_2\}}$ is completely symmetrical. We consider the Fourier expansion of $f$ written as follows,
\begin{equation}\label{eqn:FourierExpansionInProof}
f(\tau)=\sum_h c(h)\exp({2\pi i({h_1 \tau_1}+\tr\tp{\overline{h_3}}y+\tr h_3x+\tr h_2\tau_2)}),
\end{equation}
where  $h=\begin{bmatrix}h_1 & \tp{\overline{h_3}} \\
h_3 & h_2\end{bmatrix}$ ranges over the appropriate lattice of Hermitian matrices, with $h_1$ a number and $h_2$ a block of size $(n-1, n-1)$. Then we have
$$\mathrm{d}^r_{x}f(\tau)=(2 \pi i)^r \sum_{h}\sum_{\alpha}c(h)h_3^{\alpha}\exp({2 \pi i (\tr{h_1 \tau_1}+\tr\tp{\overline{h_3}}y+\tr h_3x+\tr h_2\tau_2)})\mathrm{d}x_{\alpha},$$ where $\alpha=(i_1, i_2, \dots, i_r)$ is a multi-index and $h_3^{\alpha}$ denotes $h_3^{(i_1)}h_3^{(i_2)}\dots h_3^{(i_r)}$, the product of respective entries of the row vector $h_3$.

Consequently, we have 
\begin{align}
\left.\mathrm{d}^r_{x} f\right\vert_{\substack{x=0\\y=0}}&=(2 \pi i)^r \sum_{h}\sum_{\alpha}c(h)h_3^{\alpha}\mathrm{d}x_{\alpha}\exp({2 \pi i ({h_1 \tau_1}+\tr h_2\tau_2)})\\
&=\label{FourierForRestriction}\sum_{\alpha}\sum_{h_1} \underbrace{\left((2 \pi i)^r\sum_{h_2, h_3}c\left(\begin{bmatrix}h_1 & \tp{\overline{h_3}} \\ h_3 & h_2\end{bmatrix}\right)h_3^{\alpha}\exp({2 \pi i {\tr(h_2 \tau_2)}})\right)}_{C(h_1, \alpha)}\exp({2 \pi i {h_1 \tau_1}}) \mathrm{d} x_{\alpha},
\end{align}
where for fixed $\tau_2$ and $\alpha$, the terms $C(h_1, \alpha)$ are the Fourier coefficients for $\mathrm{d}^r_{x} f|_{\HH^{(i)}\times \{\tau_2\}}$ projected onto $\mathrm{d}x_{\alpha}$. It follows that such a coefficient indexed by $h_1$ can be nonzero only if $h_1$ fits into a positive-semidefinite Hermitian matrix $\begin{bmatrix}h_1 & \tp{\overline{h_3}} \\
h_3 & h_2\end{bmatrix}$. In particular, in this case $h_1 \geq 0$, which proves the claim.

In the case (UB), we proceed similarly using Fourier--Jacobi expansions. Let us assume $m> n$, and utilize a change of coordinates on $\U$ according to Section~\ref{subsec:UTs}. That is, we may treat $f$ as a function $f(\tau, u)$ on the symmetric space $\widetilde{\HH}_{m, n}$ instead, and consider the variant of the construction outlined in Remark~\ref{rem:ThmForVariants}. In the notation introduced therein, the Fourier--Jacobi expansion takes the following form:
\begin{equation}\label{eqn:FourierJacobiExpansionInProof}
f(\tau, u)=\sum_h c(w, u_2; h)\exp({2\pi i({h_1 \tau_1}+\tr\tp{\overline{h_3}}z+\tr h_3x+\tr h_2\tau_2)})
\end{equation}
(recall from Remark~\ref{rem:ThmForVariants} that $z, w$ are names for $y$-coordinates based on whether they come from $\tau$ or $u$). In the case of the operator $\mathrm{d}^r_x(-),$ the argument above applies almost verbatim, replacing $c(h)$ with $c(w, u_2; h)$, $\tr{\tp{\overline{h_3}y}}$ with $\tr{\tp{\overline{h_3}z}}$, etc. 

In the case of the operator $\mathrm{d}^r_y(-),$ the same argument still applies, but the formula for the resulting coefficients $C(h_1, \bullet )$ is more involved; namely, we have
$$C(h_1, \alpha, \beta)=\sum_{h_2, h_3}\partial_{w_\beta}c\left( 0, u_2; \begin{bmatrix} h_1 & h_3\\ \tp{\overline{h_3}} & h_2\end{bmatrix}\right)(2 \pi i)^{|\alpha|}h_3^{\alpha}\exp({2 \pi i {\tr(h_2 \tau_2)}})\mathrm{d} z_{\alpha}\mathrm{d} w_{\beta}.$$  
Here $\alpha, \beta$ are again multi-indices with $|\alpha|+|\beta|=r$, where $|\alpha|, |\beta|$ denotes their lengths. The key point is that when the matrix $h$ is not positive-semidefinite, the coefficient functions $c(u; h)$ are identically zero functions of $u$, and therefore so are all the partial derivatives $\partial_{w_{\beta}}c(-; h)$ appearing in the formula. 

Finally, the remaining case is when $\U$ is of equal signature $(n, n)$, but not itself quasi-split. The argument in this case uses the second variant of coordinates listed in Section~\ref{subsec:UTs}, but otherwise goes along the same lines as the above two variants. To avoid excessive repetition, we leave this case to the reader. 
\end{proof}

\begin{proof}[Proof of Theorem~\ref{thm:HigherDerivatives}]
Theorem~\ref{thm:HigherDerivatives} follows directly as a combination of Lemma~\ref{lem:WittLA} and Proposition~\ref{prop:Cusps}. Let us only stress the point that the spaces $L_X, L_Y$ taken in Proposition~\ref{prop:Cusps} are finite-dimensional, so that Lemma~\ref{lem:WittLA} applies. 
\end{proof}

We finish this section with the observation that our construction produces cusp forms out of cusp forms. 

\begin{proposition}\label{prop:CuspForms}
In the situation of Theorem~\ref{thm:HigherDerivatives}, assume that we are in the case (UT) and that $f$ is a cusp form. Then the decomposition of $\left.\mathrm{d}^r_{x, \lambda} f\right\vert_{\HH^{(1)}\times \HH^{(2)}}$ can be written in the form $\sum f_k \otimes F_k,$ where all the forms $f_k, F_k$ are cusp forms of appropriate levels and weights. Similarly, in the decomposition  $\left.\mathrm{d}^r_{y, \lambda} f\right\vert_{\HH^{(1)}\times \HH^{(2)}}=\sum g_k \otimes G_k,$  all the forms $g_k, G_k$ can be taken as cusp forms. 
\end{proposition}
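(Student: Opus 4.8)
The plan is to refine the tensor decomposition already produced by Theorem~\ref{thm:HigherDerivatives} so that each factor is individually cuspidal. First I would fix a decomposition $\left.\mathrm{d}^r_{x,\lambda}f\right|_{\HH^{(1)}\times\HH^{(2)}}=\sum_j f_j\otimes F_j$ with $f_j\in M_{\rho^{+,\lambda}_{1,(k,l)}}(\Gamma_1)$ and $F_j\in M_{\rho^{-,\lambda}_{2,(k,l)}}(\Gamma_2)$, and pass to one of minimal length. For a minimal decomposition both families $\{f_j\}$ and $\{F_j\}$ are linearly independent (the standard fact that a tensor of minimal rank has linearly independent factors on each side), and, since the spaces of automorphic forms involved are vector spaces, the factors still lie in $M_{\rho^{+,\lambda}_{1,(k,l)}}(\Gamma_1)$ and $M_{\rho^{-,\lambda}_{2,(k,l)}}(\Gamma_2)$. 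By Definition~\ref{def:CuspForms}, to prove that $f_j$ is a cusp form it then suffices to show that for every $\beta_1\in\U_1(\QQ)$ the Fourier expansion of $f_j\,||\,\beta_1$ is supported on positive-definite matrices; the claim for the $F_j$ is entirely symmetric, with $\beta_2\in\U_2(\QQ)$.

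The crucial input is an equivariance of the operator under translation by $\eta(\U_1(\QQ)\times\U_2(\QQ))$: for all $\beta_1\in\U_1(\QQ)$, $\beta_2\in\U_2(\QQ)$,
\[
    \left.\mathrm{d}^r_{x,\lambda}\bigl(f\,||_{(k,l)}\,\eta(\beta_1,\beta_2)\bigr)\right|_{\HH^{(1)}\times\HH^{(2)}}
        =
    \Bigl(\left.\mathrm{d}^r_{x,\lambda}f\right|_{\HH^{(1)}\times\HH^{(2)}}\Bigr)\,||_{\rho^{+,\lambda}_{1,(k,l)}\boxtimes\rho^{-,\lambda}_{2,(k,l)}}\,(\beta_1,\beta_2).
\]
To establish this I would first show, by induction on $s$, that the vanishing hypotheses propagate to the translate, i.e. $\left.\mathrm{d}^s_x\bigl(f\,||_{(k,l)}\,\eta(\beta_1,\beta_2)\bigr)\right|_{\HH^{(1)}\times\HH^{(2)}}=0$ for all $s<r$. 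Indeed, rerunning the computation in the proof of Proposition~\ref{prop:Modularity} with a general $\beta_i\in\U_i(\QQ)$ in place of $\gamma_i\in\Gamma_i$, and invoking Corollary~\ref{cor:DiffirentialInCorner} so that every $\left.\mathrm{d}^b_x(\eta(\beta_1,\beta_2)\tau)\right|$ has only $x$-components, expresses $\left.\mathrm{d}^s_x(f\,||_{(k,l)}\,\eta(\beta_1,\beta_2))\right|$ entirely through the pure $x$-derivatives $\left.\mathrm{d}^a_x f\right|$ with $a\le s$, which all vanish once $s<r$. Carrying the same computation out at order $s=r$ then yields the displayed equality. Independently, cuspidality is preserved by the $\U(\QQ)$-slash: since this is a right action, $(f\,||_{(k,l)}\,\eta(\beta_1,\beta_2))\,||_{(k,l)}\,\beta=f\,||_{(k,l)}\,(\eta(\beta_1,\beta_2)\beta)$ for every $\beta\in\U(\QQ)$, so $f\,||_{(k,l)}\,\eta(\beta_1,\beta_2)$ is again a cusp form by Definition~\ref{def:CuspForms}.

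With these two inputs the conclusion follows. Taking $\beta_2=1$, the equivariance gives $\sum_j (f_j\,||\,\beta_1)\otimes F_j=\left.\mathrm{d}^r_{x,\lambda}\bigl(f\,||_{(k,l)}\,\eta(\beta_1,1)\bigr)\right|_{\HH^{(1)}\times\HH^{(2)}}$, the restricted $r$-th derivative of the cusp form $g=f\,||_{(k,l)}\,\eta(\beta_1,1)$. Computing the Fourier expansion of this restricted derivative from that of $g$, exactly as in Proposition~\ref{prop:Cusps} (and as displayed in Section~\ref{sec:example}), one sees that its $\tau_1$-Fourier coefficient attached to a given $h_1$ is a finite sum of Fourier coefficients $c(h)$ of $g$ whose upper-left block is $h_1$; since $g$ is a cusp form these all vanish unless $h$, hence its diagonal block $h_1$, is positive-definite. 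Writing $f_j\,||\,\beta_1=\sum_{h_1}a_j(h_1)\exp(2\pi i\,\tr(h_1\tau_1))$ and matching $\tau_1$-coefficients, the coefficient at $h_1$ equals $\sum_j a_j(h_1)\otimes F_j$ as a function of $\tau_2$; by linear independence of $\{F_j\}$ its vanishing for non-positive-definite $h_1$ forces every $a_j(h_1)=0$. Thus each $f_j\,||\,\beta_1$ is supported on positive-definite matrices, and since $\beta_1$ was arbitrary each $f_j$ is a cusp form; the symmetric argument, with $\beta_1=1$ and linear independence of $\{f_j\}$, shows each $F_j$ is a cusp form. The same reasoning applies verbatim to $\mathrm{d}^r_{y,\lambda}$, exchanging the roles of the two $\rho^{\pm,\lambda}$ weights. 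The main obstacle is the equivariance, and specifically the verification that the order-of-vanishing hypotheses survive translation by $\eta(\beta_1,\beta_2)$: this is precisely what lets us transport the single Fourier computation of Proposition~\ref{prop:Cusps}, carried out at the cusp $\infty$, to every cusp of $\U_1$ and $\U_2$.
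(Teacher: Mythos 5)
Your proof is correct, but it reaches the conclusion by a different mechanism than the paper. The paper's own argument is very short: it re-runs the proofs of Proposition~\ref{prop:Cusps} and Theorem~\ref{thm:HigherDerivatives} with the spaces $L_X$, $L_Y$ in Witt's lemma (Lemma~\ref{lem:WittLA}) replaced by the finite-dimensional spaces of \emph{cusp} forms $S_{\rho^{+,\lambda}_{1,(k,l)}}(\Gamma_1)$ and $S_{\rho^{-,\lambda}_{2,(k,l)}}(\Gamma_2)$; the only new input is that, $f$ being cuspidal, its Fourier coefficients $c(h)$ are supported on positive-definite $h$, so the coefficients $C(h_1,\alpha)$ in the analogue of Equation~\eqref{FourierForRestriction} are supported on positive-definite $h_1$ (and $h_2$), and Witt's lemma then directly outputs a decomposition whose factors are cuspidal. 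You instead bypass a second application of Witt's lemma: you take a minimal-length decomposition furnished by Theorem~\ref{thm:HigherDerivatives}, note that minimality forces linear independence of the factors on each side, and then prove each factor is individually cuspidal by matching $\tau_1$-Fourier coefficients. The Fourier-support computation you use is the same as the paper's; the extra work in your route is the explicit equivariance of $\mathrm{d}^r_{x,\lambda}(-)\vert_{\HH^{(1)}\times\HH^{(2)}}$ under the slash by $\eta(\beta_1,\beta_2)$ for rational $\beta_i$, together with the propagation of the order-of-vanishing hypotheses to the translate. Your verification of this is sound: Lemma~\ref{lem:Shimura} holds for all of $\U(\RR)$ and the proof of Corollary~\ref{cor:DiffirentialInCorner} uses only the block-diagonal shape of $\eta(\beta_1,\beta_2)$, so the computation of Proposition~\ref{prop:Modularity} goes through with $\beta_i\in\U_i(\QQ)$ in place of $\gamma_i\in\Gamma_i$, provided (as you do) one replaces the modularity identity for $f$ by the definition of the translated function $f||_{(k,l)}\,\eta(\beta_1,\beta_2)$. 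It is worth noting that the paper needs this same fact but leaves it implicit, in the sentence of the proof of Proposition~\ref{prop:Cusps} that reduces checking conditions at an arbitrary cusp to checking at $\infty$ by "acting on $f$ by $\eta(\beta)$"; making it explicit is a genuine gain in rigor. What the paper's route buys is brevity, given that Lemma~\ref{lem:WittLA} was set up to accept arbitrary finite-dimensional spaces $L_X$, $L_Y$, so no minimality or linear-independence discussion is needed; what your route buys is the slightly sharper conclusion that \emph{every} minimal-length decomposition automatically has cuspidal factors, plus an equivariance statement of independent interest.
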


\begin{proof}
We may repeat the proofs of Proposition~\ref{prop:Cusps} and Theorem~\ref{thm:HigherDerivatives} almost verbatim, with the following two adjustments:
\begin{enumerate}[(1)]
\item{In the Fourier expansion for $f$ (Equation \eqref{eqn:FourierExpansionInProof}), one has $c(h)\neq 0$ only when the Hermitian matrix $h$ is positive-definite (rather than non-negative). As a result, writing again $$h=\begin{bmatrix} h_1 & \tp{\overline{h_3}}\\ h_3 & h_1 \end{bmatrix}$$ for $h_1, h_2$ Hermitian matrices of the appropriate dimensions, the coefficients in the analogue of Equation \eqref{FourierForRestriction} are nonzero only when $h_1, h_2$ are positive-definite.}
\item{As a result, we conclude an analogue of Proposition~\ref{prop:Cusps} (hence an analogue of proof of Theorem~\ref{thm:HigherDerivatives}) with the choice of $L_X$ and $L_Y$ as the spaces of cusp forms (of the indicated level and weight) instead of the full spaces of automorphic forms.}
\end{enumerate}
\end{proof}

\section{Algebraic geometric differential operators}\label{sec:ShimuraVar}
We now explain how to reformulate our differential operators algebraic geometrically.  While unnecessary for the explicit application above, this gives a coordinate-free description of the operators that could be seen as more intrinsic.  It also shows that the construction can be realized over a smaller ring than $\CC$.  The idea for this formulation is based on the geometric construction of the Maass--Shimura operators in \cite{kaCM} that was extended to symplectic groups in \cite{hasv} and unitary groups in \cite{Eischen2012, EM21, EM22}.  

This section is divided into two portions.  First, in Section \ref{sec:algingredients}, we introduce the main ingredients in a general setting, without specialization to automorphic forms or unitary groups.  Then, in Section \ref{sec:algdiffopsmforms}, we specialize to the setting of certain Shimura varieties of type A (unitary groups), noting that type C (symplectic groups) can be obtained similarly.  The main result of this section is Theorem \ref{thm:reformulation}, which reformulates the differential operators from earlier in the paper algebraic geometrically.

\subsection{Ingredients}\label{sec:algingredients}
In this section, we introduce ingredients for our geometric reformulation of the differential operators.  These ingredients are schemes and sheaves with particular properties (\ref{sec:sheavesschemes}), the Gauss--Manin connection and Kodaira--Spencer morphism (\S\ref{sec:gmks}), algebraic differential operators on algebraic de Rham cohomology (\S\ref{sec:operatorD}), and Maass--Shimura operators (\S\ref{sec:msops}).  We will specialize to the setting of relevant Shimura varieties in Section \ref{sec:algdiffopsmforms}.  To aid with clarity, Section \ref{sec:takeaways} summarizes the key takeaways from the present section that will enable us to efficiently construct the operators in our specific setting.  For readers seeking a more detailed treatment of the ingredients introduced in this section, we recommend \cite[Section 3]{EFMV18}; other possibilities include \cite[Section 5.1]{EM21}, \cite[Section 3]{Eischen2012}, and \cite[Section 4]{hasv}.

\subsubsection{Schemes and sheaves with particular properties}\label{sec:sheavesschemes}
Given a smooth morphism of schemes $Y\rightarrow T$ and a polarized abelian scheme $\tilde{\pi}:\auniv\rightarrow Y$, we consider the Hodge bundle
\begin{align}\label{equ:hodgebundle}
\omega_{\auniv/Y}:=\tilde{\pi}_*\Omega^1_{\auniv/Y}
\end{align}
and the algebraic de Rham cohomology $H^1_{\dR}(\auniv/Y)$.  
When the data $\auniv/Y$ is clear from context, we set $\omega=\omega_{\auniv/Y}$ and $H^1_{\dR}:=H^1_{\dR}(\auniv/Y)$.  Likewise, when the data $Y/T$ is clear from context, we set $\Omega := \Omega_{Y/T} = \Omega^1_{Y/T}$.  We identify $\omega$ with its image in $H^1_{\dR}$ under the Hodge filtration
\begin{align*}
0\rightarrow \omega\hookrightarrow H^1_{\dR}\twoheadrightarrow R^1\tilde{\pi}_*\mathcal{O}_{\auniv}\rightarrow 0.
\end{align*}

Given a $T$-subscheme $\iota: Y'\hookrightarrow Y$, we have exact sequences of sheaves
\begin{align}
0\rightarrow \mathcal{N}^\vee_{Y'/Y}\rightarrow \iota^\ast\Omega_{Y/T}\rightarrow \Omega_{Y'/T}\rightarrow 0\label{conormalsplit}\\
0\rightarrow \mathcal{T}_{Y'/T}\rightarrow \iota^\ast \mathcal{T}_{Y/T}\rightarrow\mathcal{N}_{Y'/Y}\rightarrow 0,\nonumber
\end{align}
where $\mathcal{T}_{Y/T}\cong \Omega_{Y/T}^\vee$ and $\mathcal{T}_{Y'/T}= \Omega_{Y'/T}^\vee$ are tangent sheaves and $\mathcal{N}_{Y'/Y}$ denotes the normal sheaf on $Y'$.  The sheaf $\mathcal{N}^\vee_{Y'/Y}$ is the conormal bundle.  In general, these exact sequences do not split.  We will see in Section \ref{sec:conormalsplitting} that there is a canonical splitting, however, for the particular embeddings of Shimura varieties of types A and C arising in Section \ref{sec:operatorD}.

\subsubsection{Gauss--Manin connection and Kodaira--Spencer morphism}\label{sec:gmks}
We will construct differential operators from the Gauss--Manin connection
\begin{align*}
\nabla = \nabla_{\auniv/Y}: H^1_{\dR}\rightarrow H^1_{\dR}\otimes\Omega_{Y/T}^1
\end{align*}
and the Kodaira--Spencer morphism $\omega\otimes\omega\rightarrow\Omega_{Y/T}^1$.  Since $\nabla$ is a connection, we have 
\begin{align}\label{equ:nablaparts}
\nabla(fu) = u\otimes df + f\nabla(u)
\end{align}
for all $f$ in the structure sheaf of $Y$ and sections $u$ in $H^1_{\dR}$.  As noted in Section \ref{sec:ksiso}, in the setting of Shimura varieties of types A and C, the Kodaira--Spencer morphism induces an isomorphism 
\begin{align}\label{equ:ksiso1}
\ks:\Omega^1\xrightarrow{\sim}\omega^2, 
\end{align}
where $\omega^2$ is a certain subsheaf of $\omega^{\otimes 2}$.  When we have such an isomorphism $\ks$, we identify $\Omega^1$ and $\omega^2$ through $\ks$.

Via the Leibniz rule (product rule), $\nabla$ extends to a connection on tensor powers, symmetric powers, and exterior powers of $\omega$ and of $H^1_{\dR}$.  More generally, this also extends further to include sheaves obtained by applying Schur functors, like in \cite{EFMV18}.  We will denote such sheaves by $\mathcal{F}$ immediately below.

\subsubsection{Algebraic differential operators on algebraic de Rham cohomology}\label{sec:operatorD}
When we have an isomorphism $\ks$ as in \eqref{equ:ksiso1}, we compose $\nabla$ with $1\otimes\ks$ to obtain algebraic differential operators 
\begin{align*}
D: H^1_{\dR}&\rightarrow H^1_{\dR}\otimes \omega^2.
\end{align*}
We can iterate the operator $D$, $d$ times for each positive integer $d$, to obtain differential operators $D^d = D\circ\cdots \circ D$ that are applied to $H^1_{\dR}$ and, more generally, to sheaves $\mathcal{F}$ formed from $H^1_{\dR}$ as described immediately above. 

Given a $T$-subscheme $\iota: Y'\hookrightarrow Y$ as above, we denote by $\mathcal{F}_r$ the subsheaf of sections of $\mathcal{F}$ that vanish to order $r$ on $Y'$, i.e. sections $\sum_{i} f_i u_i$ with the $u_i$ a basis of nowhere vanishing sections in $\mathcal{F}$ and the $f_i$ elements of the structure sheaf such that $f_i$ vanishes to order $r$ on $Y'$ for all $i$. 
\begin{lemma}\label{lem:Dbasic1}
Suppose we have the isomorphism $\ks$ on $Y$ as above.  For $\mathcal{F}$ and $\iota$ as immediately above, we have $\iota^\ast(D^r\mathcal{F}_r)\subset \iota^\ast(\mathcal{F}\otimes \Sym^r\Omega_{Y/T})$.
\end{lemma}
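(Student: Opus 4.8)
The plan is to reduce to a local computation and isolate the top-order symbol of $D^r$. The statement is local on $Y$, and $\iota^\ast$, the order-of-vanishing condition, and symmetry of a tensor in its $\Omega$-factors are all local and coordinate-independent, so I would first pass to a Zariski (or \'etale) chart, adapted to the smooth immersion $\iota$, on which $Y'$ is the vanishing locus of transverse coordinates $z_1,\dots,z_c$ completed to a local coordinate system, and on which $\mathcal{F}$ is free with a frame $u_1,\dots,u_N$ of nowhere-vanishing sections. Write $I=(z_1,\dots,z_c)$ for the ideal sheaf of $Y'$. Using $\ks$ as in \eqref{equ:ksiso1} to identify $\omega^2$ with $\Omega^1=\Omega_{Y/T}$, the operator $D$ becomes $D(t)=\sum_k\nabla_k(t)\otimes dz_k$, where $\nabla_k$ is the covariant derivative along $\partial_{z_k}$ of the induced connection on the relevant tensor sheaf, obeying the Leibniz rule $\nabla_k(f\,t)=(\partial_k f)\,t+f\,\nabla_k(t)$ coming from \eqref{equ:nablaparts}.

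By definition of $\mathcal{F}_r$, a local section is $s=\sum_i f_i u_i$ with each $f_i\in I^r$. Iterating $D$ and expanding by Leibniz, $D^r(s)$ becomes a sum of terms indexed by how each of the $r$ covariant derivatives is distributed: a given derivative either differentiates a coefficient function, or acts through the connection on a frame section $u_i$, or acts through the connection on one of the accumulated $dz$-factors (producing connection coefficients). The key bookkeeping step is to track the $I$-order of the coefficient function of each term. Each covariant derivative lowers this order by at most one, and does so only when it differentiates the coefficient in a transverse direction $z_k$ with $k\le c$; derivatives in tangent directions, derivatives landing on some $u_i$, and derivatives landing on a $dz$-factor all leave the $I$-order unchanged. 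Since the coefficients start in $I^r$ and there are exactly $r$ derivatives, a term can fail to lie in $I\cdot\mathcal{O}_Y$ — hence survive the restriction $\iota^\ast$ — only if every one of the $r$ derivatives differentiates the coefficient function transversally.

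Consequently $\iota^\ast D^r(s)$ equals the image of the single surviving top-symbol term
\[
    \iota^\ast D^r(s) = \sum_i \sum_{1\le k_1,\dots,k_r\le c} \iota^\ast\!\big(\partial_{k_1}\cdots\partial_{k_r} f_i\big)\,\iota^\ast(u_i)\otimes dz_{k_1}\otimes\cdots\otimes dz_{k_r}.
\]
Because ordinary partial derivatives commute, the coefficient $\partial_{k_1}\cdots\partial_{k_r} f_i$ is symmetric in $(k_1,\dots,k_r)$, so this tensor is invariant under permuting its $r$ copies of $\Omega_{Y/T}$; that is, it lies in $\iota^\ast(\mathcal{F}\otimes\Sym^r\Omega_{Y/T})$, viewing $\Sym^r\Omega$ as the symmetric tensors inside $\Omega^{\otimes r}$. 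In fact, since each surviving $dz_k$ has $k\le c$, it lands in the smaller sheaf $\iota^\ast\mathcal{F}\otimes\Sym^r\mathcal{N}^\vee_{Y'/Y}$ under the canonical conormal splitting of Section~\ref{sec:conormalsplitting}. Gluing the charts yields the asserted global containment.

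The computation of the symbol is routine; the delicate point is the bookkeeping of the second paragraph, namely rigorously justifying that every term in the Leibniz expansion in which some derivative is spent on the connection (on a frame section or on a $dz$-factor, i.e. on Christoffel-type coefficients) or on a tangent direction is annihilated by $\iota^\ast$. This is exactly where the hypothesis $s\in\mathcal{F}_r$ — vanishing to order $r$, matched to the $r$-fold iteration of $D$ — is used in full. It is worth emphasizing that no integrability or flatness of $\nabla$ is needed: the non-symmetric and curvature contributions are strictly lower order in $I$ and simply die upon restriction, leaving only the manifestly symmetric principal symbol.
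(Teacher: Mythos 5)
Your proof is correct and is essentially the paper's own argument, spelled out in full: the paper's proof is a one-line appeal to the Leibniz rule \eqref{equ:nablaparts}, whose content is exactly your bookkeeping that every Leibniz term spending a derivative on the frame, on an accumulated $\Omega$-factor, or in a tangent direction retains a coefficient vanishing on $Y'$, so that only the all-transverse top-symbol term survives $\iota^\ast$ and is symmetric because partial derivatives commute (this is the formula the paper records as Equation \eqref{equ:theyrethesame}). Your additional observation that the image in fact lies in $\iota^\ast\mathcal{F}\otimes\Sym^r\mathcal{N}^\vee_{Y'/Y}$ is slightly stronger than the stated containment but is consistent with how the paper subsequently uses the lemma to define $\Theta^r$ via the projection $\pi$.
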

\begin{proof}
This follows immediately from applying Equation \eqref{equ:nablaparts} to a section of $\mathcal{F}$ that vanishes to order $r$ on $Y'$.
\end{proof}

Suppose we have the isomorphism $\ks$ on $Y$ and an embedding $\iota:Y'\hookrightarrow Y$ as above.  If, additionally, the exact sequence \eqref{conormalsplit} splits, then we denote
\begin{align*}
\pi:\iota^\ast\Omega_{Y/T}\rightarrow \mathcal{N}^\vee
\end{align*} 
the projection onto $\mathcal{N}^\vee$ mod $\Omega_{Y'/T}$.  We also use the same notation for the induced projections on symmetric powers
\begin{align*}
\pi:\iota^\ast\Sym^d\Omega_{Y/T}&\rightarrow \Sym^d(\mathcal{N}^\vee).
\end{align*}
Furthermore, we write $\pi$ to mean the projection $\mathrm{id}_{\iota^\ast\mathcal{F}}\otimes\pi$, where $\mathrm{id}_{\iota^\ast\mathcal{F}}$ denotes the identity on $\iota^\ast\mathcal{F}$.  This simplifies notation, and there will be no ambiguity about the meaning in the contexts in which we will employ this notation.  Under these conditions, we define an operator
\begin{align}\label{equ:thetadefn}
\Theta^r:=\pi\circ\iota^\ast\circ \left(D^r|_{\mathcal{F}_r}\right): \mathcal{F}_r\rightarrow \iota^\ast\left(\mathcal{F}\otimes\Sym^r\mathcal{N}^\vee\right).  
\end{align}
Explicitly, if $fu$ is a section of $\mathcal{F}_r$ with $u$ a nonvanishing differential and $f$ in the structure sheaf, $\mathscr{B} = \left\{\partial_i\right\}_{1\leq i \leq m}$ a basis for the tangent bundle, and $\mathscr{B}'=\left\{w_i\right\}_{1\leq i \leq m}$ the dual basis for the cotangent bundle, then 
\begin{align}\label{equ:theyrethesame}
\iota^* \circ (D^r|_{\mathcal{F}_r}) (fu) = \iota^\ast\left(u\otimes \sum_{1\leq\nu_1\leq \cdots\leq \nu_r\leq m}\partial_{\nu_1}\cdots\partial_{\nu_r}(f)w_{\nu_1}\cdots w_{\nu_r}\right).
\end{align} 

\subsubsection{Brief digression on Maass--Shimura operators}\label{sec:msops}
Although Maass--Shimura operators are not the main focus of this paper, it will be useful for us to recall their construction briefly.  (Much more detailed treatments are available in \cite{Eischen2012, hasv, kaCM}.)  Suppose we, for the moment, extend our consideration from the algebraic to the $C^\infty$ setting and take $Y$ to be a manifold over which the Hodge decomposition $H^1_{\dR} = H^{1, 0}\oplus H^{0, 1}$ holds, with $\omega = H^{1, 0}$.  Still working in the $C^\infty$ setting, the projection of $H_{\dR}^1$ onto $\omega$ mod $H^{0,1}$ induces projections from $\left(H^1_{\dR}\right)^{\otimes d}$ to $\omega^{\otimes d}$, and we have analogous results for the sheaves $\mathcal{F}$.  Suppose we have the isomorphism $\ks$ as above.  Let $\mathscr{D}$ be the operator obtained by composing the algebraic operator $D$ from Section \ref{sec:operatorD} with this projection.  We have that $H^{0, 1}$ is holomorphically horizontal, i.e. $\nabla(H^{0, 1})\subseteq H^{0, 1}$.  Thus, $D$ commutes with taking quotients mod $H^{0, 1}$, and it makes sense to iterate the operator $\mathscr{D}$ $d$ times to obtain operators $\mathscr{D}^d$.  In the setting of where $\mathcal{F}$ is a sheaf of automorphic forms on a Shimura variety $Y$ over $\CC$, this operator is called the {\em Maass--Shimura operator}. 
In general, sections in the image of $\mathscr{D}^d$ are merely $C^\infty$, not holomorphic.  As a corollary of Lemma \ref{lem:Dbasic1} concerning the operator $D$, we have the following result for an embedding $\iota: Y'\hookrightarrow Y$ of manifolds analogous to the one for sheaves above.
\begin{corollary}\label{coro:comparems}
Suppose we have the isomorphism $\ks$ on $Y$ as above.  For $\mathcal{F}$ and $\iota$ as immediately above, we have $\iota^\ast(\mathscr{D}^r\mathcal{F}_r)=\iota^\ast(D^r\mathcal{F}_r)\subset \iota^\ast(\mathcal{F}\otimes \Sym^r\Omega_{Y/T})$.
\end{corollary}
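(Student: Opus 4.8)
The plan is to isolate the genuinely new content. Since Lemma~\ref{lem:Dbasic1} already supplies the inclusion $\iota^\ast(D^r\mathcal{F}_r)\subset \iota^\ast(\mathcal{F}\otimes\Sym^r\Omega_{Y/T})$, everything reduces to the equality $\iota^\ast(\mathscr{D}^r\mathcal{F}_r)=\iota^\ast(D^r\mathcal{F}_r)$. Recall that $\mathscr{D}$ is obtained from $D$ by inserting the Hodge projection $p\colon H^1_{\dR}\to\omega$ (with kernel $H^{0,1}$) after each application, so the whole problem is to show that this projection becomes invisible after restricting order-$r$-vanishing sections to $Y'$.

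First I would use the horizontality of $H^{0,1}$, namely $\nabla(H^{0,1})\subseteq H^{0,1}\otimes\Omega_{Y/T}$, recorded in the digression above. This shows that $D$ commutes with the quotient $H^1_{\dR}\twoheadrightarrow H^1_{\dR}/H^{0,1}\cong\omega$, and hence that all the intermediate projections defining $\mathscr{D}^r$ can be collected into a single total projection applied at the very end; concretely, $\iota^\ast(\mathscr{D}^r s)=\iota^\ast\bigl(p^{\otimes}(D^r s)\bigr)$ for every section $s$, where $p^{\otimes}$ is the induced projection onto the $\omega$-built summand of the target sheaf. This step is what makes the iterated operator $\mathscr{D}^r$ tractable, and it is the reason horizontality was emphasized in the construction.

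Next I would apply this to a section $fu$ of $\mathcal{F}_r$, with $u$ a nowhere-vanishing section and $f$ a function vanishing to order $r$ on $Y'$, and invoke the explicit formula~\eqref{equ:theyrethesame} for $\iota^\ast\circ D^r$. The point of that formula is that the vanishing-order bookkeeping underlying Lemma~\ref{lem:Dbasic1} kills every term of the Leibniz expansion except the single one in which all $r$ derivatives land on $f$; in this surviving term the section $u$ is never touched by $\nabla$, so it remains $\omega$-valued, and the $r$ newly created factors are each of the form $\ks(df)\in\omega^2\subset\omega^{\otimes 2}$. Thus the surviving term already lies in the $\omega$-built summand, the total projection $p^{\otimes}$ fixes it, and we conclude $\iota^\ast(\mathscr{D}^r(fu))=\iota^\ast(D^r(fu))$; summing over local sections gives the claimed equality, and Lemma~\ref{lem:Dbasic1} then yields the stated containment.

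The step I expect to be the main obstacle is the first one: justifying rigorously that the projections inserted at intermediate stages of $\mathscr{D}^r$ may be deferred to the end without altering the restriction. This requires care, because $p$ acts on the running $H^1_{\dR}$-accumulator at every stage, and a priori an $H^{0,1}$-component created midway could be truncated by $p$ and thereby change a later stage. The horizontality $\nabla(H^{0,1})\subseteq H^{0,1}\otimes\Omega_{Y/T}$ is exactly the input that rules this out, since it guarantees that $D$ descends to the quotient by $H^{0,1}$; I would make this precise by an induction on $r$, comparing $D$ and $\mathscr{D}$ stage by stage on the quotient sheaf and checking at each stage that the branches discarded by $p$ are precisely those retaining a factor of $f$ that vanishes on $Y'$.
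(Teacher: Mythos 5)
Your proposal is correct and follows essentially the same route as the paper: the paper's proof is exactly the combination of the Leibniz-rule vanishing argument from Lemma~\ref{lem:Dbasic1} (which leaves only the term where all $r$ derivatives land on $f$, so the restriction stays $\omega$-valued and is fixed by the Hodge projection) with the realization of $\mathscr{D}$ as the quotient of $D$ mod $H^{0,1}$ via horizontality. Your version simply spells out in more detail (including the deferral of intermediate projections) what the paper compresses into one sentence.
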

\begin{proof}
This follows immediately from the proof of Lemma \ref{lem:Dbasic1}, together with the realization of $\mathscr{D}$ as the quotient of $D$ mod $H^{0, 1}$.
\end{proof}

\subsubsection{Takeaways}\label{sec:takeaways}
In Section \ref{sec:algdiffopsmforms}, we employ the above ingredients in a special setting: automorphic forms on unitary Shimura varieties.  The operator $\Theta^r$ and a variant we produce below will be our desired differential operators on automorphic forms.  Thus, we need to select specific instances of the following:
\begin{itemize}
\item{Schemes $\mathcal{A}$, $Y$, and $T$ and an embedding of $T$-schemes $\iota:Y'\hookrightarrow Y$ meeting the above criteria}
\item{A sheaf $\mathcal{F}$ on $Y$ meeting the above criteria}
\end{itemize}
such that the following hold:
\begin{itemize}
\item{The sections of $\mathcal{F}$ can be identified with automorphic forms on unitary groups, and the sections of $\iota^\ast\mathcal{F}$ correspond to automorphic forms on the desired product of unitary groups.}
\item{Over $\CC$, $\iota: Y'\hookrightarrow Y$ can be identified with the embeddings of Hermitian symmetric spaces from the first part of this paper, and $\mathcal{F}$ is identified with the space of automorphic forms. (N.B. Over these Hermitian symmetric spaces, we have the Hodge splitting of $H^1_{dR}$, so we do not need to check this criterion separately.)}
\item{The exact sequence \eqref{conormalsplit} splits.}
\end{itemize}
Once we have all this, as well as an additional splitting of $\mathcal{N}^\vee$, Equation \eqref{equ:theyrethesame} will enable us to see that we have produced algebraic differential operators on algebraic geometric automorphic forms that agree, over $\CC$, with the differential operators produced earlier in this paper.

\subsection{Differential operators on algebraic automorphic forms}\label{sec:algdiffopsmforms}
We now specialize the constructions and results from Section \ref{sec:algingredients} to the setting of automorphic forms on certain Shimura varieties of type A (unitary groups) and C (symplectic groups).  The first three portions of this section recall material that is already well-covered in the existing literature (or straightforward to deduce from the existing literature): Section \ref{sec:shimuravarieties} briefly recalls key properties of certain Shimura varieties of types A and C (including those that will play the role of $Y'$ and $Y$ from above in our setting), Section \ref{sec:shimurasheaves} introduces some well-known sheaves, and Section \ref{sec:ksiso} states the Kodaira--Spencer isomorphism $\ks$ for such Shimura varieties.  The only new material in this section is covered in the last two portions: Section \ref{sec:conormalsplitting}, which establishes splittings for the conormal bundle $\mathcal{N}^\vee$, and Section \ref{sec:diffopcomparison}, which completes the construction of our algebraic geometric differential operators and proves that they coincide over $\CC$ with the differential operators defined earlier in the paper.

We keep the background on Shimura varieties concise, recalling only the details necessary to move ahead.  Without this approach, it would be easy for the reader to get lost in well-established information about Shimura varieties and miss the main points about what is actually new here, namely the differential operators.  In case the reader would like a more thorough treatment of the background material, though, we cite references that go into much more detail and generality.

\subsubsection{Some Shimura varieties}\label{sec:shimuravarieties}
We introduce certain Shimura varieties of type A (unitary) and C (symplectic), whose components over $\CC$ can be identified with the Hermitian symmetric spaces from earlier in the paper.  Everything in this section is covered in more detail and generality in the literature, e.g. \cite{Lan20, kot92, Lan13}.  The literature closest to the presentation here includes \cite[Sections 2.2 and 4.4.1]{EischenAWS}, \cite[Sections 2.3 and 3.1]{EHLS}, and \cite[Sections 2.1 and 2.2]{EM22}.

We shall handle the cases of unitary groups (the setting of the present paper) and symplectic groups (the setting of Cl\'ery and van der Geer's work  \cite{CleryVDGeer} upon which we build) simultaneously.  Following the usual conventions, we refer to the unitary case as type A and the symplectic case as type C.  For clarity and simplicity, we only introduce here the particular instances of type A and C Shimura varieties whose connected components over $\CC$ correspond to the Hermitian symmetric spaces in our and Cl\'ery--van der Geer's work.  It is straightforward to extend our entire construction and results to all type A and C Shimura varieties, though.

For the remainder of the paper, we fix a choice of setting: either unitary groups (where our results earlier in the paper) or symplectic groups (like in \cite{CleryVDGeer}).  We refer to these as type A and C, respectively.  Furthermore, we fix a field $K$, $K$-vector spaces $V_\alpha$ and $V_\beta$, and nondegenerate pairings $\langle, \rangle_\alpha$ and $\langle, \rangle_\beta$ on $V_\alpha$ and $V_\beta$, respectively, that meet the following conditions:
\begin{itemize}
\item{For type A, we require that $K$ is a quadratic imaginary extension of $\QQ$ and that the pairings are Hermitian.}
\item{For type C, we require that $K=\QQ$ and that the pairings are symplectic.}
\end{itemize}
We set $W = V_\alpha\oplus V_\beta$ and denote by $\langle, \rangle$ the pairing on $W$ defined by $\langle(v_\alpha, v_\beta), (v_\alpha', v_\beta')\rangle = \langle v_\alpha, v_\alpha'\rangle_\alpha +\langle v_\beta, v_\beta'\rangle_\beta$ for all $v_\alpha, v_\alpha\in V_\alpha$ and $v_\beta, v'_\beta\in V_\beta$.  From this data, we obtain PEL Shimura data and associated schemes $\mathcal{M}_\alpha$, $\mathcal{M}_\beta$, and $\mathcal{M}_{U}$, which correspond to the (unitary or symplectic) groups that fix the pairings on each of these vector spaces.  Let $E$ be the compositum of the reflex fields here, so these schemes are all defined over $\mathrm{Spec}(E)$.  The field $E$ is a subfield of $K$.

Shimura varieties in this context correspond to similitude groups.  Given a $K$-vector space $V$ and a pairing $\langle, \rangle_V$ on $V$, we denote by $G(V, \langle, \rangle_V)$ the subgroup of $\GL(V)$ preserving $\langle, \rangle_V$ up to similitude.  We set $G_\alpha = G(V_\alpha, \langle, \rangle_\alpha)$, $G_\beta = G(V_\beta, \langle, \rangle_\beta)$, and $G_U = G(W, \langle, \rangle)$.  We denote by $G_{\alpha, \beta}$ the subgroup of $G_\alpha\times G_\beta$ consisting of elements $(g_\alpha, b_\beta)$ with the same similitude on each of the two factors.  There is an associated scheme $\mathcal{M}_{\alpha, \beta}$ defined over $\mathrm{Spec}(E)$.  

Associated to the canonical inclusions and projections of these groups, we have morphisms of schemes
\begin{align*}
\iota: \mathcal{M}_{\alpha, \beta}&\hookrightarrow \mathcal{M}_U\\
j: \mathcal{M}_{\alpha, \beta}&\hookrightarrow \mathcal{M}_\alpha\times_E\mathcal{M}_\beta\\
 \mathcal{M}_\alpha\times_E\mathcal{M}_\beta&\twoheadrightarrow \mathcal{M}_\alpha\\
  \mathcal{M}_\alpha\times_E\mathcal{M}_\beta&\twoheadrightarrow \mathcal{M}_\beta.
\end{align*}
We denote the composition of $j$ with the last two projections by
\begin{align*}
p_\alpha:\mathcal{M}_{\alpha, \beta}&\rightarrow\mathcal{M}_\alpha\\
p_\beta:\mathcal{M}_{\alpha, \beta}&\rightarrow\mathcal{M}_\beta.
\end{align*}
For each of the subscripts $\square$ on $\mathcal{M}$, we have universal abelian schemes 
\begin{align*}
\tilde{\pi}_\square:\mathcal{A}_\square\rightarrow \mathcal{M}_\square.
\end{align*}
When it will not cause confusion, we drop the subscript.

We make three remarks about the relationship with the material earlier in this paper:
\begin{itemize}
\item{The morphism $\iota$ is precisely the specialization to our setting of the morphism $\iota$ introduced in general in Section \ref{sec:algingredients}, i.e. we take $Y=\mathcal{M}_U$ and $Y'=\mathcal{M}_{\alpha, \beta}.$  In the notation of Section \ref{sec:algingredients}, we take $T = \mathrm{Spec} (E)$, or we extend scalars and work over a $\mathrm{Spec}(E)$-scheme $T$.}  
\item{$\mathcal{M}_U(\CC)$ is a finite union of disjoint copies of the Hermitian symmetric space $\mathfrak{h}_U$ for the subgroup $U$ of $\GL(W)$ preserving the pairing $\langle, \rangle$.  In type A, $U$ is a unitary group and $\mathfrak{h}_U =\mathcal{H}:=\mathcal{H}_U$; and in type C, $U\cong \Sp_g$ and $\mathfrak{h}_U=\mathfrak{H}_g$ for $g=\dim W$.}
\item{$\mathcal{M}_{\alpha, \beta}(\CC)$ is a finite union of disjoint copies of $\mathfrak{h}_\alpha\times\mathfrak{h}_\beta$, with $\mathfrak{h}_\alpha$ and $\mathfrak{h}_\beta$ the Hermitian symmetric spaces for the subgroups $U_\alpha$ of $\GL(V_\alpha)$ and $U_\beta$ of $\GL(V_\beta)$ preserving the pairings $\langle, \rangle_\alpha$ and $\langle, \rangle_\beta$, respectively.  In type A, this corresponds to the embedding $\mathcal{H}_\alpha\times\mathcal{H}_\beta\hookrightarrow\mathcal{H} =\mathcal{H}_U$ associated to the inclusion $U_\alpha\times U_\beta\hookrightarrow U$, like in the first part of this paper.  In type B, this corresponds to the embedding $\mathfrak{H}_j\times\mathfrak{H}_{g-j}\hookrightarrow\mathfrak{H}_g$, with $j=\dim V_\alpha$ and $g-j = \dim V_\beta$, associated to the inclusion $\Sp_j\times \Sp_{g-j}\hookrightarrow \Sp_g$ like in \cite{CleryVDGeer}.}
\end{itemize}

\subsubsection{Some sheaves}\label{sec:shimurasheaves}
For each of the subscripts $\square$ on $\mathcal{M}$, we set
\begin{align*}
\omega_\square:=\omega_{\mathcal{A}_\square/\mathcal{M}_\square},
\end{align*}
where the right hand side employs the notation introduced for the Hodge bundle in Equation \eqref{equ:hodgebundle}.
When it will not cause confusion, we drop the subscript.  We also set
\begin{align*}
H:=H_U&:=H^1_{\dR}(\mathcal{A}_U/\mathcal{M}_U).
\end{align*}

In type A, complex conjugation on $K$ induces decompositions
\begin{align}
\omega &= \omega^+\oplus \omega^-\\
H &= H^+\oplus H^-,
\end{align}
with the $\pm$-submodules determined by the two possible actions of $K$ (i.e. acting through the involution or its square).  We also have similar decompositions for any modules in the setting of type A, and we denote the superscripts similarly.

Given a vector bundle $\mathcal{G}$, we write $\wedge^{\mathrm{top}}\mathcal{G}$ for the top exterior power of $\mathcal{G}$, i.e. $\wedge^{\mathrm{top}}\mathcal{G} = \wedge^r\mathcal{G}$, where $r$ denotes the rank of $\mathcal{G}$.  Let $k$ and $\ell$ be integers.  For type $A$, a scalar-valued automorphic form of weight $(k, \ell)$ is a global section of the line bundle 
\begin{align*}
\omega^{(k, \ell)} := (\wedge^{\mathrm{top}}\omega^+)^k\otimes(\wedge^{\mathrm{top}}\omega^-)^\ell.
\end{align*}  For type $C$, a scalar-valued automorphic form of weight $k$ is a global section of the line bundle 
\begin{align*}
\omega^k:=(\wedge^{\mathrm{top}}\omega)^k.  
\end{align*}

More generally, automorphic forms for type A and C are defined in, for example, \cite[Section 3.3]{EischenAWS} and \cite[Section 2.2]{EM22}.  We briefly recall the definition now.  When we remove the requirement that we are working with scalar weight forms, the definition of automorphic forms employs the sheaves $\mathcal{E} := \mathrm{Isom}_{\mathcal{O}_\mathcal{M}}(\omega, \mathcal{O}^a_\mathcal{M})$ for type C and $\mathcal{E}: = \mathcal{E}_+\oplus\mathcal{E}_-$ with $\mathcal{E}_\pm:=\mathrm{Isom}_{\mathcal{O}_\mathcal{M}}(\omega_{\pm}, \mathcal{O}^{a_\pm}_\mathcal{M})$ for type A.  (Here, $\mathcal{O}_M$ denotes the structure sheaf of $\mathcal{M}$,  $a_\pm$ is the rank of $\omega_\pm$, and $a$ is the rank of $\omega$.)  Given a representation $(\rho, M)$ of $H = \GL_a$ or $H=\GL_{a_+}\times GL_{a_-}$ over $E$, we define
\begin{align*}
\omega^\rho :=\mathcal{E}^\rho:= \mathcal{E}\times^H M,
\end{align*}
to be the sheaf such that for each $E$-algebra $R$, $\omega^\rho(R) = (\mathcal{E}(R)\times M\otimes R)/\sim$, with the equivalence relation $\sim$ given by $(\ell, m)\sim (g\ell, \rho ({ }^t g^{-1})m)$ for all $g\in H$.  An automorphic form is a global section of $\omega^\rho = \mathcal{E}^\rho$.  Motivation for this definition is provided in \cite[Remark 3.2.8]{EischenAWS}.  Note that this construction is compatible with extending scalars.  Over $\CC$, automorphic forms defined this way can be identified with automorphic forms defined earlier in the paper.  It is straightforward to see that the Maass--Shimura differential operators defined in Section \ref{sec:msops} also respect this compatibility (which is also addressed in more detail \cite[Remark 8.1]{Eischen2012}).

\subsubsection{Kodaira--Spencer isomorphism}\label{sec:ksiso}
Over Shimura varieties of types A and C, the Kodaira--Spencer morphism induces an isomorphism
\begin{align*}
\ks: \Omega\cong\omega^2,
\end{align*}
where 
\begin{align}\label{equ:omega2}
\omega^2 :=\begin{cases}
\omega^+\otimes \omega^- & \mbox{for type A (unitary groups)}\\
\Sym^2\omega & \mbox{for type C (symplectic groups)}. 
\end{cases} 
\end{align}
Going forward, we identify $\Omega$ with $\omega^2$ via $\ks$.  More details about the Kodaira--Spencer isomorphism in our settings is available in, for example, \cite[Section 3.1]{EM22} and \cite[Section 2.3.5]{Lan13}.

\subsubsection{Splitting for conormal bundle}\label{sec:conormalsplitting}
For our embeddings of Shimura varieties of type A or C, the Kodaira--Spencer isomorphism induces a canonical splitting of the conormal bundle introduced in Section \ref{sec:algingredients}.

\begin{lemma}\label{lem:conormalsplitting}
We have a canonical splitting
\begin{align*}
\iota^\ast\Omega_{\mathcal{M}_U/T} = \Omega_{\mathcal{M}_{\alpha, \beta}/T}\oplus \mathcal{N}^\vee_{\mathcal{M}_{\alpha, \beta}/\mathcal{M}_U},
\end{align*} and furthermore,
\begin{align*}
\mathcal{N}^\vee_{\mathcal{M}_{\alpha, \beta}/\mathcal{M}_U} = \begin{cases}
\mathcal{N}^\vee_+\oplus \mathcal{N}^\vee_-, &\mbox{type A (unitary groups)}\\
p_\alpha^*\omega_\alpha\otimes p_\beta^*\omega_\beta, & \mbox{type C (symplectic groups)} ,
\end{cases}
\end{align*}
with
\begin{align*}
\mathcal{N}^\vee_+\cong (p_\alpha^\ast\omega_\alpha^+\otimes p_\beta^\ast\omega^-_\beta)\\
\mathcal{N}^\vee_-\cong (p_\alpha^\ast\omega_\beta^+\otimes p_\beta^\ast\omega^-_\alpha).
\end{align*}
\end{lemma}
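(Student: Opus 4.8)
The plan is to deduce both assertions from the Kodaira--Spencer isomorphism \eqref{equ:ksiso1}, combined with the behavior of the Hodge bundle under the embedding $\iota$. The guiding observation is that the representation-theoretic ``block decomposition'' of $\iota^\ast\omega_U^2$ should match, via $\ks$, the geometric conormal sequence \eqref{conormalsplit}, with the diagonal blocks recovering $\Omega_{\mathcal{M}_{\alpha,\beta}/T}$ and the off-diagonal blocks recovering the conormal bundle.

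First I would record the decomposition of the Hodge bundle along $\mathcal{M}_{\alpha,\beta}$. Since $W=V_\alpha\oplus V_\beta$ as PEL data, the universal abelian scheme $\mathcal{A}_U$ restricts over $\mathcal{M}_{\alpha,\beta}$ to an abelian scheme isogenous to $p_\alpha^\ast\mathcal{A}_\alpha\times_{\mathcal{M}_{\alpha,\beta}} p_\beta^\ast\mathcal{A}_\beta$, so pulling back the Hodge filtration yields a canonical decomposition
\begin{align*}
\iota^\ast\omega_U\cong p_\alpha^\ast\omega_\alpha\oplus p_\beta^\ast\omega_\beta,
\end{align*}
which in type A respects the $\pm$-decomposition, i.e. $\iota^\ast\omega_U^\pm\cong p_\alpha^\ast\omega_\alpha^\pm\oplus p_\beta^\ast\omega_\beta^\pm$. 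Next I would apply $\ks$, writing $\iota^\ast\Omega_{\mathcal{M}_U/T}\cong\iota^\ast\omega_U^2$ and expanding via the previous step and \eqref{equ:omega2}. In type A,
\begin{align*}
\iota^\ast(\omega_U^+\otimes\omega_U^-)\cong(p_\alpha^\ast\omega_\alpha^+\oplus p_\beta^\ast\omega_\beta^+)\otimes(p_\alpha^\ast\omega_\alpha^-\oplus p_\beta^\ast\omega_\beta^-),
\end{align*}
which splits canonically into four summands: the two diagonal ones $p_\alpha^\ast(\omega_\alpha^+\otimes\omega_\alpha^-)\oplus p_\beta^\ast(\omega_\beta^+\otimes\omega_\beta^-)$, and the two mixed ones $(p_\alpha^\ast\omega_\alpha^+\otimes p_\beta^\ast\omega_\beta^-)\oplus(p_\beta^\ast\omega_\beta^+\otimes p_\alpha^\ast\omega_\alpha^-)$. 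In type C the analogous expansion of $\Sym^2(\iota^\ast\omega_U)$ produces a diagonal part $\Sym^2(p_\alpha^\ast\omega_\alpha)\oplus\Sym^2(p_\beta^\ast\omega_\beta)$ and a mixed part $p_\alpha^\ast\omega_\alpha\otimes p_\beta^\ast\omega_\beta$.

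I would then identify the diagonal part with $\Omega_{\mathcal{M}_{\alpha,\beta}/T}$. Using $\Omega_{\mathcal{M}_{\alpha,\beta}/T}\cong p_\alpha^\ast\Omega_{\mathcal{M}_\alpha/T}\oplus p_\beta^\ast\Omega_{\mathcal{M}_\beta/T}$ (valid because $j$ is an isomorphism along the relevant symmetric-space directions, the equal-similitude condition contributing nothing to the cotangent sheaf) together with $\ks$ applied to each factor separately, the diagonal part is precisely $\Omega_{\mathcal{M}_{\alpha,\beta}/T}$, realized as a canonical subsheaf of $\iota^\ast\Omega_{\mathcal{M}_U/T}$ that splits the surjection in \eqref{conormalsplit}. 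The complementary mixed part is therefore canonically identified with $\mathcal{N}^\vee_{\mathcal{M}_{\alpha,\beta}/\mathcal{M}_U}$, giving the stated formulas, with $\mathcal{N}^\vee_+\cong p_\alpha^\ast\omega_\alpha^+\otimes p_\beta^\ast\omega_\beta^-$ and $\mathcal{N}^\vee_-\cong p_\beta^\ast\omega_\beta^+\otimes p_\alpha^\ast\omega_\alpha^-$ in type A, and $\mathcal{N}^\vee\cong p_\alpha^\ast\omega_\alpha\otimes p_\beta^\ast\omega_\beta$ in type C.

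The main obstacle I expect is verifying the compatibility of the three Kodaira--Spencer isomorphisms --- for $\mathcal{M}_U$, $\mathcal{M}_\alpha$, and $\mathcal{M}_\beta$ --- with the maps $\iota$, $p_\alpha$, and $p_\beta$, so that the purely algebraic block decomposition of $\iota^\ast\omega_U^2$ genuinely matches the geometric conormal sequence. Concretely, this amounts to tracing the functoriality of $\ks$ through the Gauss--Manin connection $\nabla$ and checking that the induced map $\iota^\ast\Omega_{\mathcal{M}_U/T}\to\Omega_{\mathcal{M}_{\alpha,\beta}/T}$ is exactly the projection onto the diagonal block; once this naturality is established, the identification of $\mathcal{N}^\vee$ and of its summands $\mathcal{N}^\vee_\pm$ is immediate, and the canonicity of the splitting follows from the canonicity of the Hodge and $\pm$ decompositions used throughout.
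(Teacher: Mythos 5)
Your proposal is correct and takes essentially the same route as the paper's proof: both transfer the problem to the Hodge bundle via the Kodaira--Spencer isomorphism, decompose $\iota^\ast\omega$ as $p_\alpha^\ast\omega_\alpha\oplus p_\beta^\ast\omega_\beta$ (with its $\pm$-refinement in type A), and identify the diagonal blocks of $\iota^\ast\omega^2$ with $\Omega_{\mathcal{M}_{\alpha,\beta}/T}$ and the off-diagonal blocks with $\mathcal{N}^\vee$. The only difference is that you explicitly flag the naturality of $\ks$ under $\iota$, $p_\alpha$, $p_\beta$ as the point needing verification, which the paper's proof takes for granted when it ``puts this together.''
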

\begin{proof}
 We put $\omega:= \omega_{\mathcal{A}_U/\mathcal{M}_U}$.  Via the Kodaira--Spencer isomorphism from Section \ref{sec:ksiso}, we have
\begin{align*}
\Omega_{\mathcal{M}_U/T}\cong\omega^2\\
\Omega_{\mathcal{M}_\alpha/T}\cong\omega_\alpha^2\\
\Omega_{\mathcal{M}_\beta/T}\cong\omega_\beta^2.
\end{align*}
We also have
\begin{align*}
\Omega_{\mathcal{M}_{\alpha, \beta}/T} &= p_\alpha^\ast \Omega_{\mathcal{M}_\beta/T}\oplus p_\beta^\ast\Omega_{\mathcal{M}_\beta/T}\\
\iota^\ast\omega &= p_\alpha^\ast\omega_\alpha\oplus p_\beta^\ast\omega_\beta.
\end{align*}
For type A, we additionally have
\begin{align*}
\iota^\ast\omega^\pm &= p_\alpha^\ast\omega^\pm_\alpha\oplus p_\beta^\ast\omega^\pm_\beta.
\end{align*}
Putting this together and setting 
\begin{align*}
\mathcal{L}_{\alpha, \beta} = \begin{cases}
(p_\alpha^\ast\omega_\alpha^+\otimes p_\beta^\ast\omega^-_\beta)\oplus (p_\beta^\ast\omega_\beta^+\otimes p_\alpha^\ast\omega^-_\alpha), & \mbox{type A}\\
p_\alpha^*\omega_\alpha\otimes p_\beta^*\omega_\beta, & \mbox{type C},
\end{cases}
\end{align*}
we get
\begin{align}
\iota^\ast\Omega_{\mathcal{M}_U/T} &\cong \iota^\ast(\omega^2)\nonumber\\
&\cong p_\alpha^\ast(\omega_\alpha^2)\oplus p_\beta^\ast(\omega_\beta^2)\oplus \mathcal{L}_{\alpha, \beta}\label{equ:hsplitting}\\
&\cong p_\alpha^\ast \Omega_{\mathcal{M}_\beta/T}\oplus p_\beta^\ast\Omega_{\mathcal{M}_\beta/T}\oplus \mathcal{L}_{\alpha, \beta}\nonumber \\
&\cong \Omega_{\mathcal{M}_{\alpha, \beta}/T}\oplus \mathcal{L}_{\alpha, \beta}.\nonumber
\end{align}
\end{proof}

Following the convention from Section \ref{sec:operatorD}, we denote by 
\begin{align*}
\pi:\iota^\ast\Omega_{\mathcal{M}_U/T}\rightarrow \mathcal{N}^\vee
\end{align*} 
the projection onto $\mathcal{N}^\vee$ mod $\Omega_{\mathcal{M}_{\alpha, \beta}/T}$.  In the unitary setting (type A), we denote by 
\begin{align*}
\pi_\pm: \iota^\ast\Omega_{\mathcal{M}_U/T}\rightarrow \mathcal{N}^\vee_{\pm}
\end{align*}
the projections onto $\mathcal{N}^\vee_{\pm}$ mod $\Omega_{\mathcal{M}_{\alpha, \beta}/T}\oplus\mathcal{N}^\vee_{\mp}$.  We also use the same notation for the induced projections on symmetric powers
\begin{align*}
\pi:\iota^\ast\Sym^d\Omega_{\mathcal{M}_U/T}&\rightarrow \Sym^d(\mathcal{N}^\vee)\\
\pi_\pm: \iota^\ast\Sym^d\Omega_{\mathcal{M}_U/T}&\rightarrow\Sym^d( \mathcal{N}^\vee_{\pm}).
\end{align*}
Furthermore, similarly to the convention we established for $\pi$ in Section \ref{sec:operatorD}, we write $\pi_\pm$ to mean the projection $\mathrm{id}_{\iota^\ast\mathcal{F}}\otimes\pi_\pm$, where $\mathrm{id}_{\iota^\ast\mathcal{F}}$ denotes the identity on $\iota^\ast\mathcal{F}$.  This simplifies notation, and there will be no ambiguity about the meaning in the contexts in which we will employ this notation.

\subsubsection{Algebraic differential operators on automorphic forms on unitary (and symplectic) groups}\label{sec:diffopcomparison}
For the remainder of the paper:
\begin{itemize}
\item{$k$ and $\ell$ are integers.}
\item{$\mathcal{F}$ is the sheaf $\omega^k$ or the sheaf $\omega^{(k, \ell)}$ on $\mathcal{M}_U$, depending on whether $\mathcal{M}_U$ is of type A or C.}
\item{$\iota: \mathcal{M}_{a,b}\hookrightarrow \mathcal{M}_U$ is the canonical embedding of Shimura varieties of type A (unitary groups) or C (symplectic groups) introduced above.}
\item{$\mathcal{F}_r$ denotes the subsheaf of $\mathcal{F}$ whose sections vanish to order $r$ on $\mathcal{M}_{a,b}$.}
\end{itemize}
We have the following algebraic differential operators that take automorphic forms that vanish to order $r$ on $\mathcal{M}_U$ to automorphic forms of higher weight on $\mathcal{M}_{a, b}$:
\begin{align}\label{equ:Thetadefn}
\Theta^r:\mathcal{F}_r\rightarrow \iota^\ast \mathcal{F}\otimes\Sym^r\mathcal{N}^\vee,
\end{align}
which was defined in Equation \eqref{equ:thetadefn} by $\Theta^r := \pi\circ \iota^\ast\circ\left(D^r|_{\mathcal{F}_r}\right)$.
For type A, there is also an algebraic differential operator
\begin{align}\label{equ:Thetadefnpm}
\Theta^r_\pm:\mathcal{F}_r\rightarrow \iota^\ast\mathcal{F}\otimes\Sym^r\mathcal{N}^\vee_{\pm}
\end{align}
defined by $\Theta^r_\pm := \pi_\pm\circ\iota^\ast\circ\left(D^r|_{\mathcal{F}_r}\right)$.
\begin{remark}
A reminder about notation: $D$ denotes the algebraic differential operator defined on de Rham cohomology in Section \ref{sec:operatorD}, and $\mathscr{D}$ denotes the Maass--Shimura operator from Section \ref{sec:msops}.
\end{remark}

\begin{theorem}\label{thm:reformulation}
The algebraic differential operators $\Theta^r$ and $\Theta^r_\pm$ coincide over $\CC$ with the other differential operators defined in this paper, in the following sense: 
\begin{enumerate}
\item{ $\Theta^r=\pi\circ\iota^\ast\circ\left(\mathscr{D}^r|_{\mathcal{F}_r}\right)$}
\item{$\Theta^r_\pm= \pi_\pm\circ\iota^\ast\circ\left(\mathscr{D}^r|_{\mathcal{F}_{r, \pm}}\right)$}
\item{On global sections of $\mathcal{F}_r$, $\Theta^r_\pm$ is the holomorphic operator defined in the first part of this paper, and $\Theta^r$ is the holomorphic differential operator defined \cite{CleryVDGeer}.}
 \end{enumerate}
\end{theorem}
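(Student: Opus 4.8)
The plan is to deduce parts (1) and (2) from the local computation already isolated in Lemma~\ref{lem:Dbasic1} and Corollary~\ref{coro:comparems}, and then to establish (3) by unwinding the explicit coordinate formula in Equation~\eqref{equ:theyrethesame} and matching the algebraic projection $\pi_\pm$ against the coordinate projections onto $\mathrm{d}x$- and $\mathrm{d}y$-differentials via Lemma~\ref{lem:conormalsplitting} and the Kodaira--Spencer isomorphism.

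For part (1), I would start from the fact that $\mathscr{D}$ is obtained from $D$ by post-composing with the $C^\infty$ projection $H^1_{\dR}\to\omega$ modulo $H^{0,1}$, so that $D-\mathscr{D}$ takes values in $H^{0,1}\otimes\omega^2$. The point to make is that for a section $fu\in\mathcal{F}_r$ (with $u$ a nonvanishing frame and $f$ vanishing to order $r$ on $\mathcal{M}_{\alpha,\beta}$), iterating $\nabla$ through Equation~\eqref{equ:nablaparts} leaves, after applying $\iota^\ast$, only the single term in which all $r$ differentiations fall on $f$; every other term carries a derivative of $f$ of order strictly less than $r$ and therefore vanishes on $\mathcal{M}_{\alpha,\beta}$. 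In that surviving term the frame $u$ is never differentiated and so stays in $\omega$, where the projection modulo $H^{0,1}$ is the identity. Hence $\iota^\ast\circ D^r$ and $\iota^\ast\circ\mathscr{D}^r$ agree on $\mathcal{F}_r$---this is exactly Corollary~\ref{coro:comparems}---and composing with $\pi$ yields (1). Part (2) would follow in the same way, composing instead with $\pi_\pm$.

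For part (3), I would fix, over $\CC$, the tautological frame of $\omega$ together with the standard coordinates $\tau=\left[\begin{smallmatrix}\tau_1 & x\\ y & \tau_2\end{smallmatrix}\right]$ on $\mathcal{M}_U(\CC)=\HH$. By Shimura's Lemma~\ref{lem:Shimura}, in these coordinates the Gauss--Manin connection acts on the frame through precisely the automorphy factors that make $D$, relative to this frame, into coordinate differentiation of the scalar $f$; Equation~\eqref{equ:theyrethesame} then reads exactly as the assertion that $\iota^\ast\circ(D^r|_{\mathcal{F}_r})(fu)$ is $u$ tensored with the coordinate $r$-th total differential $\mathrm{d}^r f$ restricted to $\HH_\alpha\times\HH_\beta$. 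It then remains to identify the projections: by Lemma~\ref{lem:conormalsplitting}, after the $\ks$-identification $\Omega\cong\omega^2$, the summands $\mathcal{N}^\vee_+$ and $\mathcal{N}^\vee_-$ are exactly the two off-diagonal blocks of $\omega^2$, which correspond to the $x$- and $y$-directions; so $\pi_+$ (resp.\ $\pi_-$) becomes the projection onto $\mathrm{d}x$- (resp.\ $\mathrm{d}y$-) differentials defining $\mathrm{d}^r_x$ (resp.\ $\mathrm{d}^r_y$) in Section~\ref{subsec:Construction}. Decomposing $\Sym^r\mathcal{N}^\vee_\pm$ by the Cauchy formula~\eqref{eq: Sym r decomposition Schur functor} and projecting onto the $\SS^\lambda$-component would then recover $\mathrm{d}^r_{x,\lambda}$ and $\mathrm{d}^r_{y,\lambda}$, with weights matching $\rho^{\pm,\lambda}_{i,(k,l)}$ of Theorem~\ref{thm:HigherDerivatives}. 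In type C there is no $\pm$-splitting and $\mathcal{N}^\vee$ is the single block $p_\alpha^\ast\omega_\alpha\otimes p_\beta^\ast\omega_\beta$, so $\Theta^r$ collapses to the operator $\mathrm{d}^r_x$ of Cl\'ery--van der Geer \cite{CleryVDGeer}.

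I expect the main obstacle to be the final identification in (3): checking over $\CC$ that the coordinate-free projection $\pi_\pm$ onto the conormal summands agrees with the coordinate projection onto the $\mathrm{d}x$/$\mathrm{d}y$-blocks. This forces one to make the Kodaira--Spencer isomorphism explicit on the tautological frame and to track the two actions of $K$ (the $\pm$-decomposition of $\omega$ and $H^1_{\dR}$) through the block structure of $\tau$, so that $\mathcal{N}^\vee_+$ and $\mathcal{N}^\vee_-$ line up with the upper-right and lower-left blocks respectively. Once that alignment is pinned down, the compatibility of the two Schur decompositions and of the automorphy weights is routine bookkeeping.
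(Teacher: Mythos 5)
Your proposal is correct and takes essentially the same route as the paper's own proof: parts (1) and (2) are obtained from Corollary~\ref{coro:comparems} together with the definitions of $\Theta^r$ and $\Theta^r_\pm$, and part (3) from the coordinate formula in Equation~\eqref{equ:theyrethesame}, the splitting of Lemma~\ref{lem:conormalsplitting}, and the identification over $\CC$ of algebraic automorphic forms (and the operators acting on them) with the holomorphic objects from the first part of the paper. The extra detail you supply---tracking which Leibniz terms survive $\iota^\ast$ and aligning $\mathcal{N}^\vee_\pm$ with the $\mathrm{d}x$- and $\mathrm{d}y$-blocks---just makes explicit what the paper's citations compress.
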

\begin{remark}
By ``coincide over $\CC$,'' we mean that the operators coincide when $\Theta$ and $\Theta_{\pm}$ are restricted to connected components of $\mathcal{M}_U(\CC)$.
\end{remark}
\begin{proof}
The first and second equalities follow from Corollary \ref{coro:comparems}, together with the definitions $\Theta^r := \pi\circ \iota^\ast\circ\left(D^r|_{\mathcal{F}_r}\right)$ and $\Theta^r_\pm : = \pi_\pm\circ\iota^\ast\circ\left(D^r|_{\mathcal{F}_r}\right)$.  The final statement follows from Equation \eqref{equ:theyrethesame}, together with the definitions of the projections $\pi$ and $\pi_\pm$.  As noted above, over $\CC$, automorphic forms defined as global sections of a sheaf can be identified with automorphic forms defined as holomorphic functions on Hermitian symmetric spaces like in the first part of this paper, and the differential operators are also compatible with this identification. 
\end{proof}
\begin{remark}
The operators $\Theta$ and $\Theta_\pm$ can be applied not only to scalar-valued automorphic forms but also to vector-valued ones.  That is, it is straightforward to define them on vector bundles of automorphic forms instead of just line bundles of automorphic forms.  Since the first part of the paper only handled scalar-valued ones, though, we emphasize that case here.
\end{remark}

\begin{remark}
Computation of the Maass--Shimura operators in coordinates shows that if we replace $\mathcal{F}_r$ by a larger submodule $\mathcal{F}'$ of $\mathcal{F}$, then the images of $\pi\circ \iota^\ast\circ\left(D^r|_{\mathcal{F}'}\right)$ and $\pi_\pm\circ\iota^\ast\circ\left(D^r|_{\mathcal{F}'}\right)$
 consist of $C^\infty$-automorphic forms that are not in general holomorphic.  One can see this already in the simplest example, namely $\Sp_2\times \Sp_2\hookrightarrow \Sp_4$.  On the larger modules of holomorphic forms merely vanishing to order $r$ in the normal direction or the $\pm$-direction, the holomorphic operators from earlier in the paper coincide with the holomorphic projection of these $C^\infty$ Maass--Shimura operators. Similarly, the operators $\Theta^r$ and $\Theta^r_\pm$ do not extend to algebraic operators on a larger subsheaf of $\mathcal{F}.$
Although we shall not need it here, readers seeking an explicit treatment in terms of coordinates might consult \cite[Section 4]{hasv} and \cite[Sections 3.1.1 and 8.4]{Eischen2012}.  Those wishing to investigate holomorphic operators even further might consult \cite{MH86}.
\end{remark}

\addcontentsline{toc}{section}{References}
\bibliography{references}
\bibliographystyle{amsalpha}

\end{document}